\title{Sequence of families of lattice polarized $K3$ surfaces, modular forms and degrees of complex reflection groups}
\author{Atsuhira Nagano}
\def\bigzerou{\smash{\lower1.7ex\hbox{\b 0}}}
\newtheorem{thm}{Theorem}[section]
\newtheorem{df}{Definition}[section]
\newtheorem{lem}{Lemma}[section]
\newtheorem{prop}{Proposition}[section]
\newtheorem{rem}{Remark}[section]
\begin{document}
\maketitle

\begin{abstract}
We introduce a sequence of families of lattice polarized $K3$ surfaces.
This sequence is closely related to complex reflection groups of exceptional type.
Namely, we obtain modular forms coming from the inverse correspondences  of the period mappings attached to our sequence.
We study  a non-trivial relation 
between our modular forms
and invariants of  complex reflection groups.
Especially, 
we consider a family concerned with the Shephard-Todd group  No.34 
based on arithmetic properties of  lattices and algebro-geometric properties of the period mappings.
\end{abstract}

\footnote[0]{Keywords:  $K3$ surfaces ; Modular forms ;  Complex reflection groups ; Compactifications defined by arrangements.  }
\footnote[0]{Mathematics Subject Classification 2020:  Primary 14J28 ; Secondary   11F11,  20F55, 32S22.}

\section*{Introduction}
In the 20th century,
Brieskorn  founded 
an interesting theory which connects finite real reflection groups, Klein singularities and families of rational surfaces  (see \cite{Br}).
For example,
according to this theory,
a family of rational surfaces defined by the equation
\begin{align}\label{SurfaceE8}
z^2=y^3+( \alpha_2 x^3+ \alpha_{8} x^2 + \alpha_{14} x+ \alpha_{20})y+(x^5+ \alpha_{12} x^3 + \alpha_{18} x^2 + \alpha_{24} x +\alpha_{30})
\end{align}
is characterized by the real reflection group $W(E_8)$.
Namely, the theory enables us 
to interpret 
the parameters $\alpha_2,\alpha_{8},\alpha_{12},\alpha_{14},\alpha_{18},\alpha_{20},\alpha_{24}$ and $\alpha_{30}$ as the invariants of $W(E_8)$ (see \cite{Sl} Chapter IV or \cite{H} Chapter 5).
Many researchers have been attracted by this theory and
they have tried to generalize it.
Indeed,
Arnold suggested that
it is an interesting problem to obtain an analogous theory for finite complex  reflection groups (see \cite{A} p.20).
There are many works for that problem based on various ideas, viewpoints and techniques.

On the other hand,
the author has studied modular forms derived from periods of $K3$ surfaces
and realized a potential of those modular forms to be applied to
studies for complex reflection groups.
In this paper,
we introduce a sequence of  families of $K3$ surfaces
whose period mappings are closely related to  complex reflection groups.

Let $U$ be the even hyperbolic lattice of rank $2$.
Let $A_m$ or $E_m$ be the root lattices of rank $m$. 
Then, the $K3$ lattice $L_{K3}$ is given by  $U^{\oplus 3}\oplus E_8(-1)^{\oplus 2}$.
Here, if a lattice $\Lambda$  with the intersection matrix $(c_{ij})$ is given, the lattice given by $(n c_{ij})$ is denoted by $\Lambda (n)$.
The $2$-homology group of a $K3$ surface $S $ is isometric to $L_{K3}$.
The N\'eron-Severi lattice, which is denoted by ${\rm NS}(S)$, is a sublattice of $H_2(S,\mathbb{Z})$ of signature $(1,\rho-1)$,
if $S$ is an algebraic $K3$ surface.
The transcendental lattice ${\rm Tr}(S)$ is the orthogonal complement of ${\rm NS}(S)$ in $H_2(S,\mathbb{Z})$.
Then, ${\rm Tr}(S)$ is an even lattice of signature $(2,20-\rho)$.

In this paper, 
we study a sequence of even lattices 
$$
{\bf A}_3 \subset {\bf A}_2 \subset {\bf A}_1 \subset {\bf A}_0 ={\bf A}, 
$$
where
\begin{align}\label{LatticesAj}
\begin{cases}
&   {\bf A}_0 =U\oplus U\oplus A_2(-1) \oplus A_1(-1),\\
& {\bf A}_1=U\oplus U\oplus A_2(-1)  ,\\
& {\bf A}_2= U\oplus U\oplus A_1(-1) ,\\
& {\bf A}_3= U\oplus \begin{pmatrix} 2 & 1 \\ 1 & -2 \end{pmatrix}.
\end{cases}
\end{align}
Set 
${\bf M}_j={\bf A}_j^\perp$ in $L_{K3}$.
Then, ${\bf A}_j$ is of type $(2,5-j)$ and ${\bf M}_j$ is of type $(1,14+j).$
We introduce a sequence of analytic sets
$$
\mathfrak{A}_3 \subset \mathfrak{A}_2 \subset \mathfrak{A}_1 \subset \mathfrak{A}_0=\mathfrak{A}
$$
with ${\rm dim}(\mathfrak{A}_j)=7-j$
(for detail, see Lemma \ref{LemA} and (\ref{ParameterSubA})).
Then, we  obtain an explicit family
\begin{align}\label{varpi}
\varpi_j: \mathfrak{F}_j \rightarrow \mathfrak{A}_j \quad (j\in\{0,1,2,3\})
\end{align}
of  ${\bf M}_j$-polarized $K3$ surfaces.
This roughly means that the N\'eron-Severi lattice of a generic member of $\mathfrak{F}_j$ is ${\bf M}_j.$
These families (\ref{varpi}) constitute  a sequence of families of  $K3$ surfaces indicated in the following diagram:
\begin{align}\label{diagram1}
  \xymatrix@C=50pt{
\mathfrak{F}_3 \ar@{^{(}-_>}[r]^{\tilde{i}_3}  \ar[d]_{\varpi_3}  & \mathfrak{F}_2 \ar@{^{(}-_>}[r]^{\tilde{i}_2} \ar[d]_{\varpi_2} &  \mathfrak{F}_1 \ar@{^{(}-_>}[r]^{\tilde{i}_1} \ar[d]_{\varpi_1}  & \mathfrak{F}_0  \ar[d]_{\varpi_0}  \\
\mathfrak{A}_3 \ar@{^{(}-_>}[r]^{i_3} &\mathfrak{A}_2 \ar@{^{(}-_>}[r]^{i_2} & \mathfrak{A}_1 \ar@{^{(}-_>}[r]^{i_1}&  \mathfrak{A}_0 \\
}
\end{align}
Here, $i_j$ and $\tilde{i}_j$ are natural inclusion.
The period domain for $\mathfrak{F}_j$ is given by a connected component $\mathcal{D}_j$ of 
\begin{align}\label{DM}
\mathcal{D}_{{\bf M}_j} =\left\{[\xi]\in \mathbb{P} ({\bf A}_j \otimes \mathbb{C} ) \mid {}^t \xi {\bf A}_j \xi =0, {}^t \xi {\bf A}_j \overline{\xi} >0 \right\},
\end{align}
which is $(5-j)$-dimensional.

The main theme of the present paper is
to show that 
there is an interesting and non-trivial relation between the sequence (\ref{diagram1}) and finite complex reflection groups of exceptional type.

The first purpose of this paper is 
to study the period mapping of the family $\mathfrak{F}_0$.
The inverse of the period mapping gives a pair of meromorphic modular forms on $\mathcal{D}_0$ (see Definition \ref{DfMeroModularForm}).
We will obtain a system of generators of the ring of these modular forms  by applying techniques for periods of $K3$ surfaces (Theorem \ref{ThmId} and \ref{Thmdet}).
In short,
we study a family of $K3$ surfaces defined by the equation
\begin{align}\label{SurfaceCRG}
z^2 = y^3 +(a_0 x^5 + a_4 x^4 + a_8 x^3 ) y +(a_2 x^7  + a_6 x^6 + a_{10} x^5  + a_{14} x^4 )
\end{align}
and we show that the parameters $a_2, a_4,a_6,a_8,a_{10}$ and $a_{14}$ with positive weight induce a system of generators of the ring of modular forms.

Our modular forms are highly expected to have a closed relation with the complex reflection group  No.34 in the list of Shephard-Todd \cite{ST} (see also \cite{LT} Appendix D),
because three times the weights of the modular forms (namely, $6,12,18,24,30$ and $42$) are equal to the degrees of the group.
This group has the maximal rank among finite complex reflection groups of exceptional type.
This expectation is based on 
not only the above mentioned apparent similarity between the weights and the degrees,
but also the following fact.
There are exact descriptions 
of  
the period mappings for the subfamilies $\mathfrak{F}_1,\mathfrak{F}_2,\mathfrak{F}_3$ 
via  invariants of  complex reflection groups.
Precisely,
the period mappings for $\mathfrak{F}_1$ ($\mathfrak{F}_2, \mathfrak{F}_3$, resp.)
derive Hermitian (Siegel, Hilbert, resp.) modular forms
with explicit expressions 
via the invariants of the complex reflection group No.33 (No.31, No.23, resp.) of rank $r_j$
and a system of appropriate theta functions
(for detail, see Section 4.3; see also Remark \ref{RemKappa}).
Let $\left(w_1^{(j)},\ldots,w_{r}^{(j)}\right)$ be the weights of the modular forms  as in Table 1.
Then, the degrees of the complex reflection groups are given by $\left(\kappa_j w_1^{(j)},\ldots, \kappa_j w_{r}^{(j)}\right).$
Here, the weights of the theta functions account for  the integer $\kappa_j$. 
\begin{table}[h]
\center
\vspace{-3mm}
\caption{Modular forms coming from $K3$ surfaces and complex reflection groups}
\vspace{1mm}
\begin{tabular}{ccccccc}
\toprule
  $j$ &Modular forms     & Weights & $K3$ surfaces  & Reflection groups & $r_j$ & $\kappa_j$ \\
 \midrule
$0$ & see Definition \ref{DfMeroModularForm} & $2,4,6,8,10,14$ & This paper &  No.34 & $6 $ & $3$ \\
$1$ &Hermitian   & $4,6,10,12,18$  & \cite{NS1}& No.33 & $5$ & $1$ \\
$2$ &Siegel &  $4,6,10,12$  & \cite{CD} & No.31 & $4$ & $2$ \\
$3$ &Hilbert &   $2,6,10$  & \cite{NHilb}  & No.23 & $3$ & $1$ \\
 \bottomrule
\end{tabular}
\end{table} 

In these works for the families $\mathfrak{F}_j$ $(j\in\{1,2,3\})$, 
the Satake-Baily-Borel compactifications for bounded symmetric domains play big roles.
However, 
in the case of $\mathfrak{F}_0$,
the Satake-Baily-Borel compactifications are inadequate,
because we need to consider modular forms defined on a complement of an arrangement of hyperplanes (see Section 3 and 5).
So, 
instead of the Satake-Baily-Borel compactifications,
we will consider the Looijenga compactifications  constructed in \cite{L}.
The Looijenga compactifications are coming from  arithmetic arrangements of hyperplanes.
We can regard them as  interpolations between the Satake-Baily-Borel compactifications and  toroidal compactifications.
Their properties  are essential for our construction of modular forms.

By the way,
there exists a double covering of every member of $\mathfrak{F}_0$,
which is a $K3$ surface also.
We obtain the family
$$
\overline{\varpi_j}:\mathfrak{G}_j \rightarrow \mathfrak{A}_j
\quad (j\in\{0,1,2,3\})
$$
whose members are such double coverings
and we have the following diagram:
 \begin{align}\label{TriDiagram}
\xymatrix{
\mathfrak{G}_j\ar[rr]^-{\varphi_j}\ar[dr]_-{\overline{\varpi_j}}&\ar@{}@<0.8ex>[d]|{}&\mathfrak{F}_j\ar[dl]^-{\varpi_j}\\
&\mathfrak{A}_j&
}
\end{align}
Here, $\varphi_j$
is a correspondence given by the double covering.

The second purpose of this paper is
to determine the transcendental lattice for $\mathfrak{G}_0.$
The family $\mathfrak{G}_0$ has interesting features.
For example,
it naturally contains the famous family of Kummer surfaces
coming from principally polarized Abelian surfaces.
Moreover,
$\mathfrak{G}_0$ is a natural extension of the family studied by Matsumoto-Sasaki-Yoshida \cite{MSY},
whose periods are solutions of the hypergeometric equation of type $(3,6)$.
In spite of interesting properties of $\mathfrak{G}_j$ $(j\in\{0,1,2,3\})$,  it is not straightforward to determine the lattices for them.
For example, if $j\in \{1,2,3\}$,
the lattices for $\mathfrak{G}_j$ were determined via precise arguments or heavy calculations
(for detail, see Section 6).
In the present paper,
we will determine the transcendental lattice for $\mathfrak{G}_0,$
based on the result of $\mathfrak{G}_1$ and arithmetic properties of even lattices (Theorem \ref{ThmTrK}).
As a result,
the transcendental lattices ${\bf B}_j$ for $\mathfrak{G}_j$ are given as follows.
\begin{align}\label{LatticesBj}
\begin{cases}
&   {\bf B}_0 =U(2)\oplus U(2)\oplus \begin{pmatrix} -2& 0 & 1 \\ 0& -2 & 1 \\  1& 1 & -4 \end{pmatrix},\\
& {\bf B}_1= {\bf A}_1(2) =U(2)\oplus U(2)\oplus A_2(-2)  ,\\
& {\bf B}_2={\bf A}_2 (2) =U(2)\oplus U(2)\oplus A_1(-2) ,\\
& {\bf B}_3={\bf A}_3 (2) =U(2)\oplus \begin{pmatrix} 4 & 2 \\ 2 & -4 \end{pmatrix}.
\end{cases}
\end{align}
Especially,
we note that ${\bf B}_0$ is not just  ${\bf A}_0(2).$

It is an interesting problem to describe our meromorphic modular forms
via the invariants of the group No.34 and explicit special functions (like theta functions).
Furthermore,
it may be quite meaningful  to understand
why the period mappings for the sequence (\ref{diagram1}) are related with complex reflection groups. 
While the methods in this paper  
and preceding papers \cite{CD}, \cite{NHilb} and \cite{NS1} are just based on
algebro-geometric properties of $K3$ surfaces
and arithmetic properties of  modular forms,
the author does not know the fundamental reason why the complex reflection groups work effectively as in Table 1.
The author expects that 
there exists an unrevealed principle 
underlying the relation between our sequence of the families and complex reflection groups.
There are indications which support this expectation.
For example,
Sekiguchi \cite{S} studies Arnold's problem
based on methods of Frobenius potentials
and he obtains a family of rational surfaces.
Although his standpoint and methods are widely different from ours,  
a direct calculation shows that
 our $K3$ surface of (\ref{SurfaceCRG}) is related to Sekiguchi's rational surface  (see Remark \ref{RemSekiguchi}).  
The author is hoping that 
a new principle will rationalize our families of $K3$ surfaces  from the viewpoint of complex reflection groups in near future,
as Brieskorn's theory enables us to explain the essence of the family of the rational surfaces of  (\ref{SurfaceE8}).

\section{Arithmetic arrangement of hyperplanes and  Looijenga compactification}

Looijenga constructed  compactifications for  bounded symmetric domains of type $IV$ derived from  arithmetic arrangements of hyperplanes.
First, we will survey his result.
For detail, see \cite{L}.

Let $V$ be an $(n+2)$-dimensional vector space over $\mathbb{C}$
with a non-degenerated symmetric bilinear form $\varphi.$
Now, we suppose that $(V,\varphi)$ has been defined over $\mathbb{Q}$ and $\varphi$ is of signature $(2,n)$ for the $\mathbb{Q}$-structure.
Then, the set 
$\{[v]\in V \mid \varphi(v,v)=0, \varphi(v,\overline{v})>0\}$ has two connected components $\mathscr{D} $ and $\mathscr{D}_-$.
We take $\mathscr{D}$ from $\{\mathscr{D} ,\mathscr{D}_-\}$.
 This is a bounded symmetric domain of type $IV$.
For a linear subspace $L$ of $V$,
we set $\mathscr{D}_L = \mathscr{D} \cap \mathbb{P}(L).$
The orthogonal group $O(\varphi)$ is an algebraic group over $\mathbb{Q}$.
Let $\Gamma$ be an arithmetic subgroup of $O(\varphi)$.
We set $X=\mathscr{D}/\Gamma$.

If a hyperplane $H$ of $V$
is defined over $\mathbb{Q}$ and of signature $(2,n-1)$,
it gives a hypersurface $\mathscr{D}_H\not= \phi.$
Suppose $\mathscr{H}$ is a $\Gamma$-invariant arrangement of hyperplanes  satisfying this property.
Such an arrangement is said to be arithmetic
if it is given by a finite union of $\Gamma$-orbits.

Set
$$
\mathscr{D}^\circ = \mathscr{D} - \bigcup_{H\in \mathscr{H}} \mathscr{D}_H.
$$
Looijenga \cite{L} constructs a natural compactification of $X^\circ=\mathscr{D}^\circ/\Gamma$.
Namely, the Looijenga compactification is given  by
\begin{align}\label{LCompact}
\widehat{X^\circ}^{\bf L}:= \widehat{\mathscr{D}^\circ}^{\bf L}/\Gamma,
\end{align}
where
\begin{align}\label{LD}
\widehat{\mathscr{D}^\circ}^{\bf L}=\mathscr{D}^\circ \sqcup \coprod_{L\in {\bf PO}(\mathscr{H}|_\mathscr{D})} \pi_L (\mathscr{D}^\circ) \sqcup \coprod_{\sigma\in \Sigma(\mathscr{H})} \pi_{\sigma} (\mathscr{D}^\circ).
\end{align}
The disjoint union of (\ref{LD}) admits an appropriate topology
and $\mathscr{D}^\circ$ is an open and dense set in $\widehat{\mathscr{D}^\circ}^{\bf L}.$
We remark that
the Looijenga compactification $\widehat{X^\circ}^{\bf L}$ coincides with the Satake-Baily-Borel compactification $\widehat{X}^{\bf SBB}$ when $\mathscr{H}=\phi.$
In the following, we will see
the meaning of (\ref{LCompact}) and (\ref{LD}).

Letting $L$ be a subspace of $V$,
we have a natural projection 
$\pi_L: \mathbb{P}(V)-\mathbb{P}(L)\rightarrow \mathbb{P}(V/L)$.
Let $ {\bf PO}(\mathscr{H}|_\mathscr{D})$  be
a set  of subspaces $L$ of $V$ 
such that 
there exists $z\in L$ with $\varphi(z,\overline{z})>0$.
We note that $L\in {\bf PO}(\mathscr{H}|_\mathscr{D})$ 
if and only if $\mathscr{D}_L\not=\phi.$
Also,
we remark that ${\bf PO}(\mathscr{H}|_\mathscr{D})$ is a partially ordered set (see \cite{LBall} Section 2 and 3).

For a $\mathbb{Q}$-isotropic line $I$ in $V$,
$\varphi$ defines a bilinear form on the $n$-dimensional space $I^\perp/I$ of signature $(1,n-1)$. 
Taking  the choice of $\mathscr{D}$ from $\{\mathscr{D},\mathscr{D}_-\}$ into account, 
we have an $n$-dimensional cone $C_I$ in  $I^\perp/I$.
Let $C_{I,+}(\subset I^\perp /I)$ be the convex hull of $(I^\perp/I) \cap \overline{C_I}$.
If a member $H\in \mathscr{H}$ contains $I$,
then it naturally determines a hyperplane $H_{I^\perp/I}$ of $I^\perp/I$ of signature $(1,n-2).$ 
The hyperplanes $H_{I^\perp/I}$ with $H_{I^\perp/I}\cap C_{I,+}\not=\phi$ 
give a decomposition $\Sigma(\mathscr{H})_I$ of the cone $C_{I,+}$ into locally rational cones.

We set
$\Sigma(\mathscr{H})=\bigcup_{I} \Sigma(\mathscr{H})_I$. 
For a cone
$\sigma \in \Sigma(\mathscr{H})_I (\subset \Sigma(\mathscr{H})),$ 
we have the $\Sigma$-support space $V_\sigma (\subset I^\perp)$.
 Namely, $V_\sigma$ contains $I$ 
 and corresponds to the $\mathbb{C}$-span of the cone
$\sigma(\subset I^\perp/I)$.
Hence,
$V_\sigma$ is given by  the intersection of $I^\perp$ and the members $H$ of $\mathscr{H}$ such that $H\supset I$. 
We put
$\pi_\sigma=\pi_{V_\sigma}$.

Set
\begin{align}\label{DSigmaH}
\mathscr{D}^{\Sigma(\mathscr{H})} = \coprod_{\sigma \in \Sigma(\mathscr{H})} \pi_\sigma (\mathscr{D}).
\end{align}
Then, $X^{\Sigma(\mathscr{H})} =\mathscr{D}^{\Sigma(\mathscr{H})} / \Gamma$ 
is a normal analytic space.

We have a blowing up
$\widetilde{X^\circ} \rightarrow X^{\Sigma (\mathscr{H})}$,
which is coming from the connected components of intersections of members  $H\in \mathscr{H}$.
The Looijenga compactification $\widehat{X^\circ}^{\bf L}$ of (\ref{LCompact}) is equal to a  blowing down 
$\widetilde{X^\circ} \rightarrow \widehat{X^\circ}^{\bf L}.$

\begin{thm}(\cite{L} Corollary 7.5)\label{ThmLooijenga}
Suppose that every $\pi_\sigma (\mathscr{D})$ in (\ref{DSigmaH}) is not  $(n-1)$-dimensional.
Then, the algebra 
$$
\bigoplus_{k\in \mathbb{Z}} H^0(\mathscr{D}^\circ,\mathcal{O}(\mathscr{L}^k))^\Gamma,
$$
where $\mathscr{L}$ is the natural automorphic bundle over $\mathscr{D}$,
is finitely generated with positive degree generators.
Its {\rm Proj} gives the Looijenga compactification $\widehat{X^\circ}^{\bf L}$ of (\ref{LCompact}). 
The boundary $\widehat{X^\circ}^{\bf L}-X^\circ$ is the strict transform of the boundary
$X^{\Sigma (\mathscr{H})} - X$.
Especially,
$$
{\rm codim} \left( \widehat{X^\circ }^{\bf L} - X^\circ \right) 
\geq 2
$$
holds.
\end{thm}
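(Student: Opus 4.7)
The plan is to follow the two-step compactification strategy implicit in the preamble, and then to extract finite generation from a codimension argument. First I would build $X^{\Sigma(\mathscr{H})}$ as a toroidal-type partial compactification along each rational isotropic line $I\subset V$, glueing in the strata $\pi_\sigma(\mathscr{D})/\Gamma$ indexed by the cones $\sigma\in\Sigma(\mathscr{H})_I$ of the arrangement-induced decomposition of $C_{I,+}$. Since each $V_\sigma=I^\perp\cap\bigcap_{H\supset I}H$ is cut out arithmetically, the resulting analytic space is normal, and the classical Satake-Baily-Borel machinery identifies it with $\widehat{X}^{\mathbf{SBB}}$ when $\mathscr{H}=\phi$. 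Next I would perform the blowup $\widetilde{X^\circ}\to X^{\Sigma(\mathscr{H})}$ along the connected components of intersections of the $\mathscr{D}_H$, so that the total arrangement becomes a simple normal crossing divisor, and finally contract the strict transforms of the hyperplane components to obtain $\widehat{X^\circ}^{\mathbf{L}}$. By construction the only boundary surviving the contraction is the strict transform of $X^{\Sigma(\mathscr{H})}-X$, which gives the boundary identification in the statement.

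The crucial use of the hypothesis that no $\pi_\sigma(\mathscr{D})$ has dimension $n-1$ is to force each boundary stratum of $\widehat{X^\circ}^{\mathbf{L}}$ to have codimension at least two. This is the last assertion of the theorem, and I would verify it by running through the decomposition (\ref{DSigmaH}) stratum by stratum: the isotropic-line strata have dimension bounded by $\dim\pi_\sigma(\mathscr{D})\leq n-2$ by hypothesis, while the hyperplane strata were collapsed in the last contraction, so what remains in the boundary all has codimension $\geq 2$. Once this is established, Hartogs' extension across the high-codimension locus identifies $H^0(\widehat{X^\circ}^{\mathbf{L}},\mathscr{L}^k)$ with the $\Gamma$-invariant automorphic forms $H^0(\mathscr{D}^\circ,\mathcal{O}(\mathscr{L}^k))^\Gamma$, so proving finite generation of the latter reduces to a global projectivity statement on $\widehat{X^\circ}^{\mathbf{L}}$.

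For the finite generation and $\mathrm{Proj}$ identification, I would extend the automorphic bundle $\mathscr{L}$ canonically from $\mathscr{D}$ through the toroidal partial compactification to $\widetilde{X^\circ}$, and check that the extension descends under the contraction to an ample line bundle on $\widehat{X^\circ}^{\mathbf{L}}$. Ampleness on the interior $X$ is standard from Baily-Borel; along each new stratum $\pi_\sigma(\mathscr{D})/\Gamma$ one has to do a local computation using the convexity of $C_{I,+}$ and the fact that $\Sigma(\mathscr{H})_I$ refines the toroidal fan. Given ampleness, standard projective algebraic geometry yields finite generation in positive degrees and $\widehat{X^\circ}^{\mathbf{L}}=\mathrm{Proj}\bigoplus_k H^0(\widehat{X^\circ}^{\mathbf{L}},\mathscr{L}^k)$.

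The hardest step, in my judgement, is proving that the contraction of the hyperplane strict transforms actually exists as a morphism of algebraic (not merely analytic) varieties, and simultaneously that the descended $\mathscr{L}$ remains ample across the contracted loci. This is where the assumption on $\dim\pi_\sigma(\mathscr{D})$ becomes indispensable: without it one could produce codimension-one boundary components along which Hartogs extension fails and along which $\mathscr{L}$ might merely be nef. Looijenga's local analysis of the cones $C_I$ and $C_{I,+}$ in $I^\perp/I$ is exactly what controls this contraction, and carrying out this analysis carefully enough to produce an honest projective model is the technical heart of the argument.
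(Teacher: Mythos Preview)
The paper does not prove this theorem at all: it is stated as a citation of \cite{L} Corollary 7.5, and the proof is entirely deferred to Looijenga's original paper. So there is no proof in the present paper to compare your proposal against.

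That said, as a sketch of Looijenga's argument your outline is broadly in the right spirit: one does pass through a partial toroidal-type compactification indexed by the arrangement cones, then performs a blowup/blowdown along the arrangement strata, and the ampleness of the extended automorphic bundle is indeed the key technical point. But your write-up is a plan rather than a proof, and several of the steps you describe as routine are in fact the substance of \cite{L}. In particular: the construction of $\widehat{\mathscr{D}^\circ}^{\mathbf L}$ with its correct topology (so that $\mathscr{D}^\circ$ is open dense and the quotient is a normal analytic space), the verification that $\mathscr{L}$ extends to an ample line bundle $\widehat{\mathscr{L}}$ on $\widehat{X^\circ}^{\mathbf L}$, and the precise role of the dimension hypothesis all require the detailed local analysis of tube domains and their boundary components carried out across Sections 2--7 of \cite{L}. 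Your sketch correctly identifies where the hypothesis enters but does not actually carry out any of these verifications. If this were submitted as a proof rather than as a reading plan for \cite{L}, it would not be accepted; as a summary of what needs to be done, it is reasonable but should simply be replaced by the citation, as the paper does.
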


Let $\widehat{\mathscr{L}}$ be an ample line bundle on $\widehat{X^\circ}^{\bf L}$ such that $\widehat{\mathscr{L}}|_{X^\circ}=\mathscr{L}|_{X^\circ}$.
It is shown in \cite{L} that
every meromorphic  $\Gamma$-invariant automorphic form whose poles are contained in $\mathscr{H}$ 
is corresponding to a meromorphic section $s$ of $\widehat{\mathscr{L}}$ such that $s|_{X^\circ}$ is holomorphic.

\subsection{Lattice ${\bf A}$ and arrangement $\mathcal{H}$}
We set
\begin{align}\label{LatticeA}
{\bf A}={\bf A}_0=U\oplus U\oplus A_2(-1) \oplus A_1(-1).
\end{align}
For this lattice ${\bf A}$, we set
\begin{align}\label{Gamma}
\Gamma = \tilde{O}({\bf A}) \cap O^+({\bf A}).
\end{align}
Here,
$\tilde{O}({\bf A})$ is the stable orthogonal group:
$\tilde{O}({\bf A}) = {\rm Ker}\left(O({\bf A}) \rightarrow {\rm Aut}({\bf A}^\vee/{\bf A}) \right)$,
where ${\bf A}^\vee={\rm Hom}({\bf A},\mathbb{Z}).$ 
Also, $O^+({\bf A})$ is the subgroup of $O({\bf A})$ which preserves the connected component $\mathcal{D}$.
The lattice ${\bf A}$ satisfies the Kneser conditions in the sense of  Gritsenko-Hulek-Sankaran \cite{GHS}.
Therefore, we have the following result.

\begin{prop}\label{PropKneser}
Let $\Delta({\bf A})$ be the set of vectors $v\in {\bf A}$ such that $(v \cdot v)=-2$.
The group $\Gamma$ of (\ref{Gamma}) is generated by reflections $\sigma_\delta: z \mapsto z+(z\cdot \delta)\delta$ for $\delta \in \Delta({\bf A})$
and it holds
${\rm Char}(\Gamma)=\{{\rm id}, {\rm det}\}$.
\end{prop}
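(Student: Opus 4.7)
The plan is to invoke directly the relevant theorem of Gritsenko-Hulek-Sankaran \cite{GHS}, which asserts that for any even lattice of signature $(2,n)$ satisfying the Kneser conditions, the stable orthogonal group is generated by reflections in $(-2)$-vectors and has character group $\{{\rm id}, {\rm det}\}$. Thus both assertions of the proposition will drop out of a single citation once the hypotheses are verified for ${\bf A}$.

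First I would confirm that ${\bf A}=U\oplus U\oplus A_2(-1)\oplus A_1(-1)$ satisfies those Kneser conditions. The decisive structural fact is the orthogonal splitting ${\bf A}\cong 2U\oplus (A_2\oplus A_1)(-1)$: the two hyperbolic summands guarantee an abundance of isotropic directions needed for Eichler-type transitivity on orbits of primitive vectors, while the negative definite part has rank only $3$ and the discriminant form on ${\bf A}^\vee/{\bf A}\cong \mathbb{Z}/2\oplus \mathbb{Z}/3$ is small with coprime components, well within the arithmetic regime required by \cite{GHS}. This step is essentially a bookkeeping check of the hypotheses of their theorem against the concrete data of ${\bf A}$.

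Next, I would apply the GHS theorem to extract both conclusions simultaneously. Generation of $\Gamma=\tilde O({\bf A})\cap O^+({\bf A})$ by the reflections $\sigma_\delta$, $\delta\in \Delta({\bf A})$, is the main generation statement of \cite{GHS}. For the character statement, their argument proceeds by analyzing the relations among $(-2)$-reflections in a lattice containing $2U$: using Eichler's criterion (which is available because $\Gamma$ acts trivially on the discriminant form), the various $\Gamma$-orbits of primitive $(-2)$-vectors become linked through enough relations that any character $\chi\in {\rm Char}(\Gamma)$ is determined by a single sign $\chi(\sigma_\delta)\in\{\pm 1\}$, forcing ${\rm Char}(\Gamma)\subseteq\{{\rm id},{\rm det}\}$; the opposite inclusion is trivial since $\det(\sigma_\delta)=-1$.

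The main obstacle is the verification of the Kneser conditions for our specific ${\bf A}$, i.e., matching the splitting and discriminant data against the hypotheses of \cite{GHS}; after that, both the generation-by-reflections statement and the computation of the character group follow directly from their theorem.
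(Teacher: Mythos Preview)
Your approach is exactly the one taken in the paper: the proposition is stated immediately after the observation that ${\bf A}$ satisfies the Kneser conditions of \cite{GHS}, and both conclusions are drawn directly from that citation without further argument. Your verification of the hypotheses (two copies of $U$, small discriminant group $\mathbb{Z}/2\oplus\mathbb{Z}/3$) is precisely the bookkeeping the paper leaves implicit.
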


Here, we note that
the intersection number of elements $v_1$ and $v_2$ of lattices are often  denoted by $(v_1\cdot v_2)$
in this paper.

\begin{lem}\label{LemGamma}
The group $\Gamma$ is isomorphic to the projective orthogonal group $PO^+({\bf A})$.
\end{lem}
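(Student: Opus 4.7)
The plan is to show that $O^+({\bf A}) = \Gamma \cdot \{\pm\mathrm{id}\}$ and that $-\mathrm{id} \notin \Gamma$, so that the canonical projection $O^+({\bf A}) \twoheadrightarrow O^+({\bf A})/\{\pm\mathrm{id}\} = PO^+({\bf A})$ restricts to an isomorphism on $\Gamma$. First, observe that $-\mathrm{id}$ acts as the identity on $\mathbb{P}({\bf A}\otimes\mathbb{C})$ and hence preserves the connected component $\mathcal{D}$, so $-\mathrm{id}\in O^+({\bf A})$; it is also central in $O({\bf A})$.

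The key computation is the discriminant form. From ${\bf A} = U\oplus U \oplus A_2(-1)\oplus A_1(-1)$ the two hyperbolic summands contribute nothing to the discriminant, so ${\bf A}^\vee/{\bf A} \cong \mathbb{Z}/3\oplus\mathbb{Z}/2$ with discriminant form $q_{\bf A} = \langle -2/3\rangle \oplus \langle -1/2\rangle \pmod{2\mathbb{Z}}$. Because the two primary components have coprime orders, every automorphism of $({\bf A}^\vee/{\bf A}, q_{\bf A})$ preserves this decomposition. On the $\mathbb{Z}/3$ summand both $\pm 1$ preserve the form, while on the $\mathbb{Z}/2$ summand only the identity is available. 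Thus $\mathrm{Aut}({\bf A}^\vee/{\bf A}, q_{\bf A}) \cong \mathbb{Z}/2$. The involution $-\mathrm{id}_{\bf A}$ descends to $(-1,1)$, which is the nontrivial element of this group; hence $-\mathrm{id}\notin \tilde{O}({\bf A})$, and therefore $-\mathrm{id}\notin \Gamma$.

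Now restrict the natural homomorphism $O({\bf A}) \to \mathrm{Aut}({\bf A}^\vee/{\bf A}, q_{\bf A})$ to $O^+({\bf A})$. Its kernel is $\tilde{O}({\bf A})\cap O^+({\bf A}) = \Gamma$ by definition, and its image is nontrivial (it contains the image of $-\mathrm{id}$). Since the target has order $2$, the image equals the whole of $\mathrm{Aut}(q_{\bf A})$, so $[O^+({\bf A}):\Gamma]=2$ and $O^+({\bf A}) = \Gamma \sqcup (-\mathrm{id})\,\Gamma$. Centrality of $-\mathrm{id}$ promotes this coset decomposition to an internal direct product $O^+({\bf A}) = \Gamma \times \{\pm\mathrm{id}\}$. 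Quotienting by the center $\{\pm\mathrm{id}\}$ yields $PO^+({\bf A}) \cong \Gamma$, as desired.

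The only nontrivial step is the identification $\mathrm{Aut}({\bf A}^\vee/{\bf A}, q_{\bf A}) \cong \mathbb{Z}/2$; once this is in place everything else is a formal consequence of the definitions of $\tilde{O}({\bf A})$ and $O^+({\bf A})$, and no appeal to Proposition~\ref{PropKneser} or to deeper lattice-theoretic machinery is necessary.
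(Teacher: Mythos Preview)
Your proof is correct and follows essentially the same approach as the paper's own argument: both compute the discriminant group of ${\bf A}$, observe that $-\mathrm{id}$ acts nontrivially on it (hence $-\mathrm{id}\notin\Gamma$), and deduce $[O^+({\bf A}):\Gamma]=2$. Your version is more explicit where the paper is terse---you actually compute $\mathrm{Aut}({\bf A}^\vee/{\bf A},q_{\bf A})\cong\mathbb{Z}/2$ and obtain surjectivity of $O^+({\bf A})\to\mathrm{Aut}(q_{\bf A})$ directly from the image of $-\mathrm{id}$, whereas the paper simply asserts the index-$2$ statement after writing down generators $y_1,y_2,y_3$ of the discriminant group.
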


\begin{proof}
Let $\{\alpha_1,\alpha_2,\alpha_3\}$ be a basis of $A_2(-1) \oplus A_1(-1)$ with $(\alpha_j \cdot \alpha_j)=-2$ $(j\in \{1,2,3\})$, $(\alpha_1\cdot \alpha_2)=1$ and $(\alpha_k \cdot \alpha_3)=0$ $(k\in \{1,2\})$.  
Then, 
$y_1=\frac{1}{3} \alpha_1 + \frac{2}{3} \alpha_2, y_2=\frac{2}{3} \alpha_1 +\frac{1}{3} \alpha_2$ and $y_3=\frac{1}{2}\alpha_3$
generate the discriminant group ${\bf A}^\vee/ {\bf A}$.
It follows that
$-id_{O^+({\bf A})}\not\in \Gamma$ 
and
$O^+({\bf A})/\Gamma \simeq \mathbb{Z}/2\mathbb{Z}$.
Hence, the assertion follows.
\end{proof}

We consider the case
$V=  {\bf A}\otimes \mathbb{C}$.
Let $e_j,f_j$ $(j\in\{1,2\})$ be elements of $U^{\oplus 2}$ satisfying $(e_j\cdot e_k)=(f_j\cdot f_k)=0$ and $(e_j\cdot f_k)=\delta_{j,k}$.
Let $\{\alpha_1,\alpha_2,\alpha_3\}$ be the  basis of $A_2(-1)\oplus A_1(-1)$  as in Lemma \ref{LemGamma}. 
A vector $v\in V$ is given by the form
\begin{align}\label{VectorA}
v= \xi_1 e_1 + \xi_2 f_1 + \xi_3 e_2 + \xi_4 f_2 + \xi_5 \alpha_1 + \xi_6 \alpha_2 + \xi_7 \alpha_3.
\end{align}
Let us consider a hyperplane $H_0=\{\xi_7=0\}$ in $V$.
For $\Gamma$ of (\ref{Gamma}), the $\Gamma$-orbits of
$H_0$ give an arithmetic  arrangement $\mathcal{H}$.

\begin{lem}\label{LemArrangement}
The above  arrangement $\mathcal{H}$ of hyperplanes satisfies the condition of
Theorem \ref{ThmLooijenga}.
\end{lem}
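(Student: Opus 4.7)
The lemma requires verifying that every stratum $\pi_\sigma(\mathscr{D})$ in (\ref{DSigmaH}) has dimension different from $n-1 = 4$, where $n=5$ since ${\bf A}$ has signature $(2,5)$. Since $\pi_\sigma(\mathscr{D}) \subset \mathbb{P}(V/V_\sigma)$ is at most $(6 - \dim V_\sigma)$-dimensional, it suffices to show $\dim V_\sigma \geq 3$ for every cone $\sigma$ contributing to (\ref{DSigmaH}). My plan is to carry out a cusp-by-cusp analysis.

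First, I would classify $\Gamma$-orbits of $\mathbb{Q}$-isotropic lines in ${\bf A}$. Because ${\bf A}$ contains two orthogonal copies of $U$ and $\Gamma = \tilde O({\bf A}) \cap O^+({\bf A})$ is generated by reflections in $(-2)$-vectors (Proposition \ref{PropKneser}), Eichler's criterion yields a single orbit of primitive isotropic vectors, so it is enough to fix the representative $I = \mathbb{C}e_1$. Then $I^\perp/I \cong U \oplus A_2(-1) \oplus A_1(-1)$, which is of signature $(1,4)$, and $C_{I,+}$ is the resulting rational closed positive cone in this space.

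Second, I would identify the arrangement $\{H_{I^\perp/I}\}$ induced on $I^\perp/I$. Each member $H \in \mathcal{H}$ is of the form $\delta^\perp$ for a $(-2)$-vector $\delta$ in the $\Gamma$-orbit of $\alpha_3$, and the condition $H \supset I$ translates into $\delta \in I^\perp$. Using the fact that $\Gamma$ acts trivially on ${\bf A}^\vee/{\bf A}$, the orbit $\Gamma \cdot \alpha_3$ is characterised as the set of $(-2)$-vectors whose image in the discriminant group equals $y_3$ (cf.\ Lemma \ref{LemGamma}), so the relevant $\delta$ can be enumerated by a finite, explicit search inside $I^\perp$. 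Their images in $I^\perp/I$ produce the hyperplanes that decompose $C_{I,+}$ into the fan $\Sigma(\mathcal{H})_I$.

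Third, I would analyse $\Sigma(\mathcal{H})_I$ cone by cone, showing that every $\sigma$ appearing in (\ref{DSigmaH}) lies on at least two independent hyperplanes of the arrangement; this forces the $\mathbb{C}$-span of $\sigma$ in $I^\perp/I$ to have dimension $\geq 2$, hence $\dim V_\sigma \geq 3$ inside $I^\perp$, which gives $\dim \pi_\sigma(\mathscr{D}) \leq 3 < 4$. The main obstacle I anticipate is precisely this combinatorial step: one must exhibit, for every candidate low-dimensional cone, multiple independent $(-2)$-vectors in $I^\perp \cap \Gamma\cdot\alpha_3$ whose orthogonals contain it. I expect this to be manageable because the Weyl-group action on the root sublattice $A_2(-1)\oplus A_1(-1)$ in ${\bf A}$, together with Eichler transvections arising from the two $U$-summands, produces a highly symmetric arrangement — a feature consistent with the expected relation to the complex reflection group No.$34$ of rank $6$ outlined in the introduction.
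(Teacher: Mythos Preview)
Your approach differs substantially from the paper's, and its third step contains a genuine gap.

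The paper carries out no cusp-by-cusp analysis whatsoever. It argues globally: since $\Gamma=\tilde O({\bf A})\cap O^+({\bf A})$ acts trivially on the discriminant group ${\bf A}^\vee/{\bf A}$, every hyperplane $\gamma H_0\in\mathcal{H}$ still contains the fixed $2$-dimensional negative-definite subspace $\langle\alpha_1,\alpha_2\rangle_{\mathbb{C}}\subset\langle\alpha_1,\alpha_2,\alpha_3\rangle_{\mathbb{C}}$. Once all members of $\mathcal{H}$ share a nonzero common negative-definite intersection, the paper simply invokes the argument of \cite{LBall}, Section~6, to conclude that the hypothesis of Theorem~\ref{ThmLooijenga} holds. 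No classification of isotropic lines, no search for $(-2)$-vectors in $I^\perp$, and no fan combinatorics are performed.

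In your plan the key implication in step three runs the wrong way. Saying that a cone $\sigma$ ``lies on at least two independent hyperplanes'' of the induced arrangement in $I^\perp/I$ means its linear span is contained in a codimension-two subspace, hence has dimension \emph{at most} $n-2$; this cannot force the span to have dimension $\ge 2$, and so it does not give $\dim V_\sigma\ge 3$. More to the point, the strata $\pi_\sigma(\mathscr{D})$ of dimension $n-1$ arise precisely from the \emph{rays} of $\Sigma(\mathcal{H})_I$, and a generic hyperplane decomposition of $C_{I,+}$ certainly produces rays. What one must actually establish is that, for every isotropic line $I$, the hyperplanes $H_{I^\perp/I}$ (for $H\in\mathcal{H}$ with $I\subset H$) already have a common intersection meeting the open cone $C_I$ in dimension $\ge 2$, so that no one-dimensional face is cut out. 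That is a ``common intersection'' statement, and it is exactly what the paper's short global argument supplies directly---bypassing Eichler's criterion, the enumeration of the orbit $\Gamma\cdot\alpha_3$, and any cone-by-cone bookkeeping.
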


\begin{proof}
We will prove that all members of $\mathcal{H}$ contain a non-zero common subspace of the negative-definite vector space $\langle \alpha_1,\alpha_2,\alpha_3\rangle_\mathbb{C}$. 
Then,
it is guaranteed that
our arrangement $\mathcal{H}$ satisfies the condition of Theorem \ref{ThmLooijenga},
as  in the argument of \cite{LBall} Section 6.

The hyperplane $H_0=\{\xi_7 =0\}$ of $V$ is generated by 
the basis $\{e_1,f_1,e_2,f_2,\alpha_1,\alpha_2\}$.
Using the notation in the proof of Lemma \ref{LemGamma}, 
${\bf A}^\vee /{\bf A}$ is generated by  $y_1,y_2$ and $y_3$.
We note that every $\gamma\in \Gamma$ 
fixes $y_j \in {\bf A}^\vee/ {\bf A}$.
This implies that 
we can take a basis of the subspace $\gamma H_0$ 
such that this basis is an extension of $\{\alpha_1,\alpha_2\}$.
Therefore,
every member of $\mathcal{H}$ contains the $2$-dimensional  subspace $\langle \alpha_1,\alpha_2 \rangle_\mathbb{C}$ of $\langle \alpha_1,\alpha_2,\alpha_3\rangle_\mathbb{C}$.
\end{proof}

\begin{rem}
The condition of Theorem \ref{ThmLooijenga} is not always satisfied.
For example, 
under the notation (\ref{VectorA}),
let us take a hyperplane $H_0'=\{\xi_5-\xi_6=0\}$.
We can see that the arrangement $\mathcal{H}'$ of $\gamma H_0'$ for $\gamma \in \Gamma$
 does not satisfy the condition of Theorem \ref{ThmLooijenga}.
 This condition is closely related to whether an arrangement gives the zero of a modular form or not.
 In fact, we can see that
 $\mathcal{H}$ is not the zero set of a modular form, whereas $\mathcal{H}'$ does.  
\end{rem}

For our symmetric space $\mathcal{D}=\mathcal{D}_0$, which is a connected component of $\mathcal{D}_{{\bf M}_0}$ of (\ref{DM}), set 
\begin{align}\label{Dcirc}
\mathcal{D}^\circ=\mathcal{D}-\bigcup_{H \in \mathcal{H}} \mathcal{D}_H.
\end{align} 
Due to Theorem \ref{ThmLooijenga} and Lemma \ref{LemArrangement},
we have the following result for our arrangement $\mathcal{H}$.

\begin{prop}\label{PropCodim}
$$
{\rm codim}\left(\widehat{\mathcal{D}^\circ/\Gamma}^{\bf L}  - \mathcal{D}^\circ/\Gamma \right)\geq 2.
$$
\end{prop}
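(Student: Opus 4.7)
My plan is to obtain this as a direct corollary of Theorem \ref{ThmLooijenga} applied to the specific data $(V,\varphi,\Gamma,\mathscr{H}) = ({\bf A}\otimes\mathbb{C},\,{\bf A},\,\Gamma,\,\mathcal{H})$. Here $V$ has signature $(2,5)$ since $U$ is of signature $(1,1)$, $A_2(-1)$ of signature $(0,2)$ and $A_1(-1)$ of signature $(0,1)$; thus in the notation of Theorem \ref{ThmLooijenga} we have $n=5$. With these choices, $\mathscr{D}^\circ = \mathcal{D}^\circ$ and $X^\circ = \mathcal{D}^\circ/\Gamma$, and the desired inequality is precisely the codimension assertion of Theorem \ref{ThmLooijenga}, provided its hypothesis is verified in our situation.

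The only thing I therefore need to check is that every stratum $\pi_\sigma(\mathcal{D})$ appearing in (\ref{DSigmaH}) has dimension strictly less than $n-1 = 4$. This was essentially dispatched already by Lemma \ref{LemArrangement}, which established that every member of the orbit arrangement $\mathcal{H}$ contains the common $2$-dimensional negative-definite subspace $\langle\alpha_1,\alpha_2\rangle_\mathbb{C}$ inside $\langle\alpha_1,\alpha_2,\alpha_3\rangle_\mathbb{C}$. I would then translate this into a dimension bound on the strata: by the construction preceding (\ref{DSigmaH}), the $\Sigma$-support space $V_\sigma$ is the intersection of some $I^\perp$ with the members of $\mathcal{H}$ containing $I$, so $V_\sigma \supset \langle\alpha_1,\alpha_2\rangle_\mathbb{C}$ as well. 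Hence the projection $\pi_\sigma = \pi_{V_\sigma}$ kills at least a two-dimensional negative-definite direction, and the resulting stratum has dimension at most $n-2 = 3$. This is exactly the geometric content of the argument invoked in Lemma \ref{LemArrangement} via \cite{LBall} Section 6.

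Once this dimension bound is in hand, the hypothesis of Theorem \ref{ThmLooijenga} is satisfied, and its final assertion gives
\[
{\rm codim}\bigl(\widehat{\mathcal{D}^\circ/\Gamma}^{\bf L} - \mathcal{D}^\circ/\Gamma\bigr) \geq 2
\]
verbatim. I do not anticipate a real obstacle here: the substantive work (the existence of a common negative-definite subspace shared by all hyperplanes in $\mathcal{H}$) was carried out in Lemma \ref{LemArrangement}, and the present proposition is effectively a specialization of Looijenga's general result to our arrangement. The only care needed is to make explicit the passage from ``common subspace in every member of $\mathcal{H}$'' to ``$V_\sigma$ contains that subspace for every $\sigma$'', which is immediate from the definition of the $\Sigma$-support space.
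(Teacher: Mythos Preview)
Your proposal is correct and follows exactly the paper's approach: the paper simply states that Proposition~\ref{PropCodim} follows from Theorem~\ref{ThmLooijenga} together with Lemma~\ref{LemArrangement}, without giving any further argument. Your write-up merely unpacks this by re-explaining why the common negative-definite subspace $\langle\alpha_1,\alpha_2\rangle_\mathbb{C}$ forces the strata $\pi_\sigma(\mathcal{D})$ to miss dimension $n-1$, which is precisely the content already established in Lemma~\ref{LemArrangement}.
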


\section{Family $\mathfrak{F}_0$ of $K3 $ surfaces with Picard number $15$}

In the present paper,
we will  consider the Looijenga compactification coming from the arithmetic arrangement $\mathcal{H}$ in Section 1.1.
In order to obtain an explicit model of  the compactification, 
we will introduce a family of elliptic $K3$ surfaces whose transcendental lattice is ${\bf A}$ in (\ref{LatticeA}) (see Theorem \ref{TheoremLattice}).
Periods for $K3$ surfaces are very important in our argument.
We remark that
the period mapping for a family  of $K3$ surfaces is essentially related to the arithmetic property of the transcendental lattice for a generic member of the family.

For $a=(a_0,a_2,a_4,a_6,a_8,a_{10},a_{14})\in \mathbb{C}^7-\{0\}=: \mathbb{C}_a$, we consider the hypersurface $S_a$ defined by an equation 
\begin{align}\label{SK3}
S_a : z^2 = y^3 +(a_0 x^5 + a_4 x^4 w^4+ a_8 x^3 w^8) y +(a_2 x^7 w^2 + a_6 x^6 w^6 + a_{10} x^5 w^{10} + a_{14} x^4 w^{14})
\end{align}
of weight $30$ in the weighted projective space ${\rm Proj}(\mathbb{C}[x,y,z,w])=\mathbb{P}(4,10,15,1)$.
We have a natural action of $\mathbb{C}^*$
on $\mathbb{P}(4,10,15,1)$
given by $(x,y,z,w)\mapsto (x,y,z,\lambda^{-1}w)$
and that on $\mathbb{C}_a$ given by $a \mapsto \lambda\cdot a  =(\lambda^k a_k)=(a_0, \lambda^2 a_2, \lambda^4 a_4, \lambda^6 a_6, \lambda^8 a_8, \lambda^{10} a_{10}, \lambda^{14} a_{14})$.

By applying \cite{NSA} Proposition 3.1, we have the following result.

\begin{lem}\label{LemA}
Let $\mathfrak{A}_0=\mathfrak{A}$ be the set of parameters $a\in  \mathbb{C}_a$ such that $S_a$ is a $K3$ surface.
Then, $\mathfrak{A}_0$ is a subset of
$
 \{a\in \mathbb{C}_a \mid a_0 \not=0\} \cup \{a\in \mathbb{C}_a \mid a_2\not=0\}
$
such that
$$
\mathbb{C}_a-\mathfrak{A}_0= \mathcal{C}' \sqcup \mathcal{C}''
$$
where
\begin{align*}
\begin{cases}
\mathcal{C}' & = \{a\in  \mathbb{C}_a \mid a_0\not=0,  a_{10}=a_{12}=a_{18}=0\} \subset \{a_0\not=0\},\\
\mathcal{C}''& = \{a\in \mathbb{C}_a  \mid a_2\not=0, a_0=a_{10}=a_{12}=a_{18}=0 \} \subset \{a_2\not=0\}.
\end{cases}
\end{align*}  
\end{lem}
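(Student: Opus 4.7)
The plan is to view $S_a$ as a Weierstrass elliptic fibration over the base $\mathbb{P}(4,1)\simeq\mathbb{P}^1$ with coordinate $t=x/w^4$ and apply Kodaira's minimality test. In the affine chart $w=1$ the equation becomes
\[
z^2=y^3+F(t)y+G(t),\quad F(t)=a_0 t^5+a_4 t^4+a_8 t^3,\quad G(t)=a_2 t^7+a_6 t^6+a_{10}t^5+a_{14}t^4,
\]
so $F$ and $G$ are naturally sections of $\mathcal{O}_{\mathbb{P}^1}(8)$ and $\mathcal{O}_{\mathbb{P}^1}(12)$ (the standard $K3$-Weierstrass range). By the Kodaira--N\'eron criterion, $S_a$ admits a crepant resolution to a smooth $K3$ surface if and only if $(F,G)$ is minimal at every $t_0\in\mathbb{P}^1$: at no base point do both $\mathrm{ord}_{t_0}F\ge 4$ and $\mathrm{ord}_{t_0}G\ge 6$ hold.

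Since $F=t^3(a_0 t^2+a_4 t+a_8)$ and $G=t^4(a_2 t^3+a_6 t^2+a_{10}t+a_{14})$ already carry built-in vanishing of orders $3$ and $4$ at $t=0$, the only candidates for non-minimality are $t_0=0$ and $t_0=\infty$. At $t_0=0$: $\mathrm{ord}_0 F\ge 4\iff a_8=0$ and $\mathrm{ord}_0 G\ge 6\iff a_{10}=a_{14}=0$, so the non-minimal locus there is $\{a_8=a_{10}=a_{14}=0\}$. At $t_0=\infty$: passing to $s=1/t$ and applying the standard Weierstrass rescaling $(Y,Z)\mapsto(s^{-2}Y,s^{-3}Z)$ turns $F,G$ into $s^8 F(1/s)=a_8 s^5+a_4 s^4+a_0 s^3$ and $s^{12}G(1/s)=a_{14}s^8+a_{10}s^7+a_6 s^6+a_2 s^5$, so $\mathrm{ord}_\infty F\ge 4\iff a_0=0$ and $\mathrm{ord}_\infty G\ge 6\iff a_2=0$, giving the non-minimal locus $\{a_0=a_2=0\}$ at infinity.

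Assembling: $\mathbb{C}_a-\mathfrak{A}_0$ is the union of these two loci. The piece $\{a_0=a_2=0\}$ forces the inclusion $\mathfrak{A}_0\subset\{a_0\ne 0\}\cup\{a_2\ne 0\}$, while the piece $\{a_8=a_{10}=a_{14}=0\}$, intersected with this good region, splits disjointly according to whether $a_0\ne 0$ or $a_0=0,\,a_2\ne 0$, yielding the two components $\mathcal{C}'$ and $\mathcal{C}''$ of the statement. The main obstacle is to verify that, outside these bad loci, the weighted projective hypersurface $S_a$ is reduced, irreducible, and has only Du Val singularities, so that its minimal resolution is genuinely a $K3$ surface; this requires local analyses at the $\mu_4$, $\mu_{10}$, $\mu_{15}$ quotient strata of $\mathbb{P}(4,10,15,1)$ (in particular around the $IV^*$-type fiber sitting over $t=0$) together with a generic-position check on the zeros of the discriminant $4F^3+27G^2$. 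Once these verifications are made, Kodaira's classification of singular fibers closes the argument.
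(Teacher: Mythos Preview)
Your approach is correct and is precisely the ``direct observation as in Section 1 of \cite{Na}'' to which the paper defers: one reads the Weierstrass data $(F,G)\in H^0(\mathcal{O}_{\mathbb{P}^1}(8))\times H^0(\mathcal{O}_{\mathbb{P}^1}(12))$ off the equation and applies the Kodaira--N\'eron minimality criterion fibre by fibre, finding that non-minimality can occur only over $t=0$ (giving $a_8=a_{10}=a_{14}=0$) or over $t=\infty$ (giving $a_0=a_2=0$). Two small remarks. First, the indices $a_{12},a_{18}$ appearing in the lemma (and likewise $u_{12},u_{18}$ in (\ref{UHartogus})) are typographical slips in the paper; your conditions $a_8=a_{10}=a_{14}=0$ are the intended ones, as one sees by comparing with the $t$-coordinates $t_{10}=a_2a_8$, $t_{12}=a_2a_{10}$, $t_{18}=a_2^2a_{14}$ of the second canonical form. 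Second, your closing paragraph about analysing the $\mu_4,\mu_{10},\mu_{15}$ quotient strata of $\mathbb{P}(4,10,15,1)$ is more than is needed: once $(F,G)$ is minimal with $4F^3+27G^2\not\equiv 0$ (and your degree count shows $\Delta\equiv 0$ already forces $a_0=a_2=0$), the standard theory of Weierstrass fibrations over $\mathbb{P}^1$ directly yields that the relatively minimal smooth model is a $K3$ surface, without any separate analysis of the ambient weighted projective singularities.
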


\begin{rem}
The surface $S_a$ degenerates to a rational surface if $a_0=a_2=0.$
\end{rem}

We have a family
$$
\varpi_0: \mathfrak{F}_0 =\{S_a \text{ of } (\ref{SK3}) \mid a \in \mathfrak{A}_0\} \rightarrow \mathfrak{A}_0
 $$
of elliptic $K3$ surfaces.

\subsection{Singular fibres}

The Weierstrass equation (\ref{SK3}) defines an elliptic surface $\pi_a: S_a\rightarrow \mathbb{P}^1(\mathbb{C}).$
For a generic point $a\in \mathfrak{A}_0$, we have singular fibres for  $\pi_a$ of Kodaira type 
\begin{align}\label{SingularFibre}
III^* + IV^* + 7 I_1,
\end{align}
as illustrated in Figure 1.
Now, $\pi_a^{-1}(\infty)$ ($\pi_a^{-1}(0)$, resp.) is a singular fibre  of Kodaira type $III^*$ ($IV^*$, resp.). 
Each gives an  $E_7$-singularity and an $E_6$-singularity, respectively.

Set $x_0=\frac{x}{w^4}$ and
\begin{align*}
 g_2^\vee (x_0,a)= a_0 x_0^5 + a_4 x_0^4 + a_8 x_0^3, 
 \quad
g_3^\vee(x_0,a)= a_2 x_0^7 + a_6 x_0^6 + a_{10} x_0^5 + a_{14} x_0^4.
\end{align*}
Let $r(a)$ be the resultant of $g_2^\vee (x_0,a)$ and $g_3^\vee (x_0,a)$ in $x_0$.
Also, let $R(x_0,a)$ be a polynomial in $x_0$ coming from the discriminant  of the right hand side of (\ref{SK3}) in $y$.
So, the discriminant of $\frac{1}{x_0^8} R(x_0,a)$ in $x_0$ is given by $r(a)^3 d_{84}(a) $,
where $d_{84} (a)$ can be calculated as a polynomial in $a$ of weight $84$.

\begin{figure}[h]
\center
\includegraphics[scale=0.5, bb=200 270 500 550]{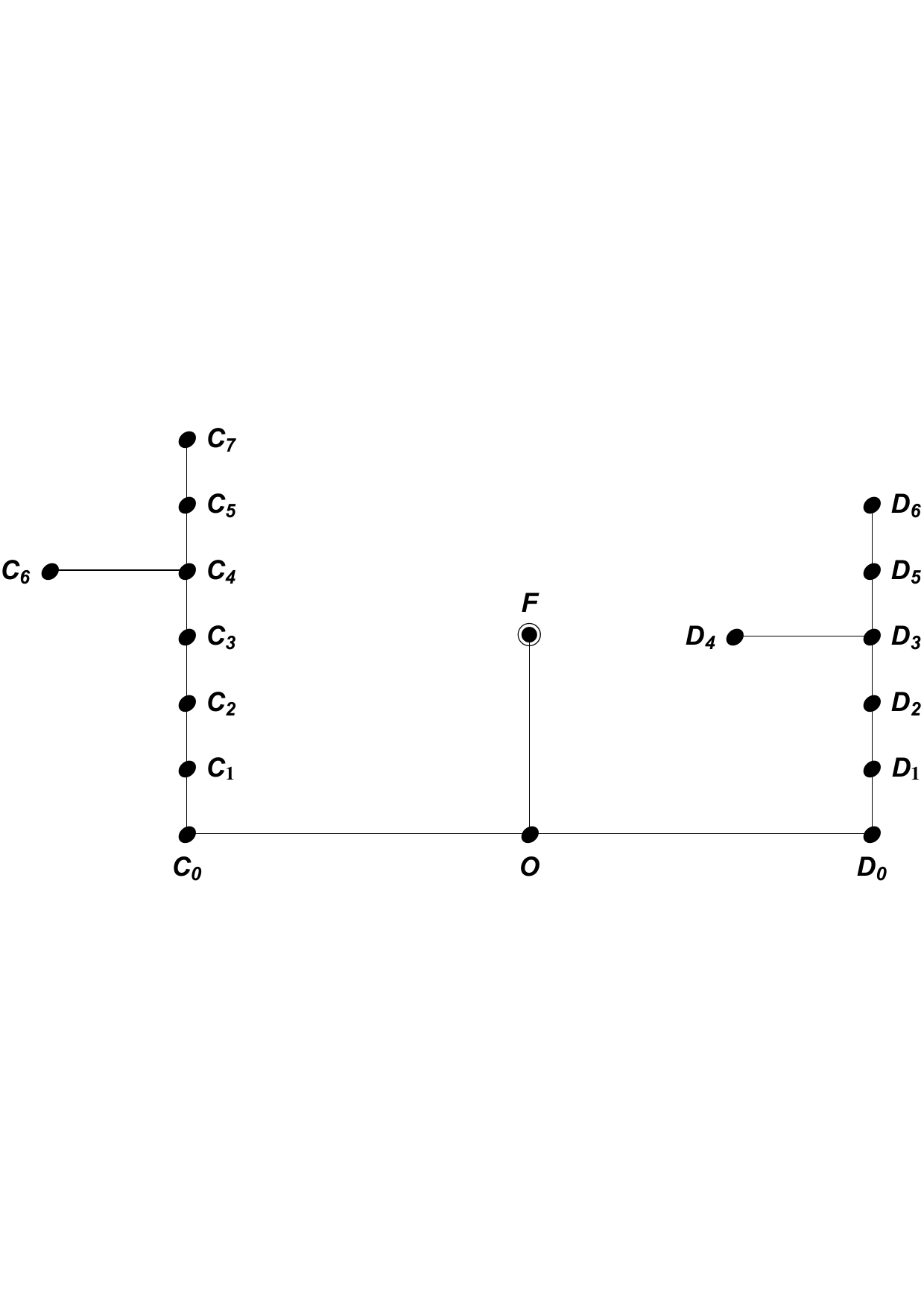}
\caption{Singular fibres for $\pi_a: S_a\rightarrow \mathbb{P}^1(\mathbb{C})$}
\end{figure}

The following lemma can be proved by arguments of elliptic surfaces as in \cite{NSA} Section 3,  \cite{HU} Section 6 and \cite{Na} Section 1.
In particular, we can determine the Kodaira type of each singular fibre by \cite{NSA} Remark 3.5.

\begin{lem}\label{LemSingular}
Except for the generic case corresponding to the fibres of (\ref{SingularFibre}),
the types of the singular fibres for the elliptic surface $\pi_a:S_a \rightarrow \mathbb{P}^1(\mathbb{C}) $ $(a\in \mathfrak{A}_0)$ are given by the following.

\begin{itemize}
\item
If $a\in \mathfrak{A}_0$ satisfies $r(a)=0$,
there is a new singular fibre of Kodaira type $II$ on the elliptic surface $S_a$.
Such a singular fibre does not acquire any new singularities.

\item
If $a\in \mathfrak{A}_0$ satisfies $d_{84}(a)=0,$
two of singular fibres of Kodaira type $I_1$ in (\ref{SingularFibre}) collapse into a singular fibre of type $I_2$:
$$
III^* + IV^* + I_2 + 5 I_1
$$
In this case,
a new $A_1$-singularity appears on the $K3$ surface $S_a$. 

\item 
If $a_0=0$, there are the singular fibres of Kodaira type
$$
II^* + IV^* +6 I_1
$$
on $S_a$.
In this situation, the $E_7$-singularity of (\ref{SingularFibre}) turns into  an $E_8$-singularity.

\item
If $a_{14}=0$, there are the singular fibres of Kodaira type
$$
III^* + III^* +6 I_1.
$$
on $S_a$.
The $E_6$-singularity of (\ref{SingularFibre}) turns into  an $E_7$-singularity.
\end{itemize}

\end{lem}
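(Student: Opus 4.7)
The plan is to apply Kodaira's classification of singular fibres (equivalently, Tate's algorithm) directly to the Weierstrass model (\ref{SK3}). Writing $g_2=a_0 x_0^5+a_4 x_0^4+a_8 x_0^3$, $g_3=a_2 x_0^7+a_6 x_0^6+a_{10} x_0^5+a_{14} x_0^4$, and $\Delta=4g_2^3+27g_3^2$ (up to a nonzero constant), the Kodaira type of the fibre over $x_0=c$ is determined, on a minimal model, by the triple $(\mathrm{ord}_c(g_2),\mathrm{ord}_c(g_3),\mathrm{ord}_c(\Delta))$, and the presence of a rational double point on the Weierstrass total space is read off from the same data. I would first reconfirm the generic configuration (\ref{SingularFibre}): at $x_0=0$ the triple $(3,4,8)$ gives type $IV^*$ with an $E_6$ double point; after the coordinate change $x_1=1/x_0$ together with the weight-$(8,12)$ normalisation appropriate for an elliptic $K3$ fibration, at $x_0=\infty$ the triple $(3,5,9)$ gives type $III^*$ with an $E_7$ double point; and the remaining simple zeros of $\Delta$ account for seven $I_1$'s, with degree bookkeeping $8+9+7=24$.

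For Case 1, I would interpret $r(a)$ as the resultant in $x_0$ of the cofactors $a_0 x_0^2+a_4 x_0+a_8$ and $a_2 x_0^3+a_6 x_0^2+a_{10} x_0+a_{14}$, so that $r(a)=0$ forces a common root $x_0=c\notin\{0,\infty\}$ of $g_2$ and $g_3$; generically this root is simple in each, producing $(\mathrm{ord}_c(g_2),\mathrm{ord}_c(g_3),\mathrm{ord}_c(\Delta))=(1,1,2)$, which is Kodaira type $II$ and contributes a smooth Weierstrass total space. For Case 2, a double root $x_0=c$ of $\Delta$ away from $\{0,\infty\}$ and from the type-$II$ locus yields $(0,0,2)$, hence type $I_2$ with one $A_1$ rational double point; that the quantity $d_{84}(a)$, introduced via the factorisation of the discriminant of $R(x_0,a)/x_0^8$ as $r(a)^3\,d_{84}(a)$, detects exactly this collision follows from dividing out those coincidences of roots that simultaneously annihilate $g_2$ and $g_3$ (which yield type $II$, not $I_2$).

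Cases 3 and 4 reduce to a single order-count adjustment each. Setting $a_0=0$ raises $\mathrm{ord}_\infty(g_2)$ from $3$ to $4$, upgrading the triple at infinity to $(4,5,10)$ and the fibre to $II^*$ ($E_8$); the bookkeeping $10+8+6=24$ absorbs one of the seven $I_1$'s. Symmetrically, $a_{14}=0$ raises $\mathrm{ord}_0(g_3)$ from $4$ to $5$, giving $(3,5,9)$ at $x_0=0$, i.e.\ type $III^*$ ($E_7$), and again absorbing one $I_1$. The main obstacle I expect is not the type computations themselves but the codimension argument underlying the phrase ``except for the generic case (\ref{SingularFibre})'': one has to verify that a generic point of each of the four loci $\{r=0\}$, $\{d_{84}=0\}$, $\{a_0=0\}$, $\{a_{14}=0\}$ does not fall into a deeper specialisation, such as a triple root of $\Delta$, a shared zero of $g_2$ and $g_3$ of higher multiplicity, or a simultaneous collision at two different fibres. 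This is a standard resultant/discriminant stratification argument, but requires some care to enumerate, and it is the only step where one genuinely uses irreducibility of the relevant strata of $\mathfrak{A}_0$.
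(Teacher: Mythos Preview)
Your proposal is correct and follows the standard route via Kodaira's classification (Tate's algorithm), computing the triples $(\mathrm{ord}_c g_2,\mathrm{ord}_c g_3,\mathrm{ord}_c\Delta)$ at $c=0,\infty$ and at the movable zeros of $\Delta$; this is exactly the ``arguments of elliptic surfaces like \cite{Na} Section~1'' that the paper invokes in lieu of a written proof. Your treatment in fact supplies the details the paper omits, and the only cosmetic discrepancy is that the paper literally writes $r(a)=\mathrm{Res}_{x_0}(g_2^\vee,g_3^\vee)$ whereas you (correctly) take the resultant of the cofactors after stripping the common power of $x_0$.
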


\subsection{Local period mapping}

Set
\begin{align}\label{A0}
\tilde{\mathfrak{A}} = \mathfrak{A}-\{a\in \mathbb{C}_a \mid a_0 a_{14} d_{84}(a)=0\}.
\end{align}
Let $F$ be a general fibre for the elliptic surface $\pi_a$ and $O$ be the zero section.
Let $C_0,\ldots,C_7$ ($D_0,\ldots,D_6$, resp.) be nodal curves in the singular fibre of type $III^*$ ($IV^*$, resp.) indicated Figure 1.
For $a\in \tilde{\mathfrak{A}}$,
the lattice generated by 
\begin{align}\label{Divisors}
F, O, C_1,\ldots , C_7, D_1, \ldots, D_6
\end{align}
is a sublattice of ${\rm NS}(S_a)$ whose intersection matrix is 
\begin{align}\label{LatticeM}
{\bf M}={\bf M}_0=U\oplus E_7(-1) \oplus E_6(-1).
\end{align}
Let $L_{K3} $ be the $K3$ lattice : $L_{K3}=II_{3,19}$.
Applying \cite{NiBilinear} Theorem 1.14.4 (see also \cite{Mo} Theorem 2.8),
we can see that ${\bf M}$ has a unique primitive embedding into $L_{K3} $ up to isometry.
The orthogonal complement of ${\bf M}$ with respect to the unimodular lattice $L_{K3}$ is given by ${\bf A}$ of (\ref{LatticeA}).
We have an isometry
$
\psi : H_2(S_a,\mathbb{Z}) \rightarrow L_{K3}
$
such that
\begin{align*}
\psi(F)=\gamma_8, \quad \psi(O)=\gamma_9, \quad \psi(C_j)=\gamma_{9+j},\quad \psi(D_k)=\gamma_{16+k} \quad (j\in\{1,\ldots,7\},k\in\{1,\ldots,6\}).
\end{align*}
Then,
the sublattice 
$\langle \gamma_8,\ldots, \gamma_{22} \rangle_\mathbb{Z}$ in $L_{K3}$ is isometric to the lattice ${\bf M}$ of (\ref{LatticeM}).
This is a primitive sublattice, because $|{\rm det}({\bf M})|=6$ is square-free.
Hence,
we can take $\gamma_1,\ldots,\gamma_7\in L_{K3}$
such that
$\{\gamma_1,\ldots,\gamma_7,\gamma_8,\ldots,\gamma_{22}\}$
gives a  basis of $L_{K3}$.
Let $\{\delta_1,\ldots,\delta_{22}\} $ be the dual basis of $\{\gamma_1,\ldots,\gamma_{22}\}$ with respect to the unimodular lattice $L_{K3}.$
Then,
the intersection matrix of the sublattice $\langle \delta_1, \ldots, \delta_7\rangle_\mathbb{Z}$ is equal to the intersection matrix of ${\bf A}$.

\begin{prop}(The canonical form for $a_0\not=0$)
If $a\in \mathfrak{A} \cap \{a_0\not =0\}$,
(\ref{SK3}) is transformed to the Weierstrass equation
\begin{align}\label{SK3Can}
S(u): z^2 = y^3 +( x^5 + u_4 x^4 w^4+ u_8 x^3 w^8) y +(u_2 x^7 w^2 + u_6 x^6 w^6 + u_{10} x^5 w^{10} + u_{14} x^4 w^{14}).
\end{align}
\end{prop}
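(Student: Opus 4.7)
The plan is simply to rescale the Weierstrass fiber coordinates $y$ and $z$. The $\mathbb{C}^{*}$-action $(x,y,z,w)\mapsto(x,y,z,\lambda^{-1}w)$ introduced above leaves $a_{0}$ fixed (it has weight $0$), so it cannot normalize the coefficient of $x^{5}y$. Instead I would use the one-parameter family of automorphisms of $\mathbb{P}(4,10,15,1)$
\[
(x,y,z,w)\longmapsto (x,\mu^{2}y,\mu^{3}z,w),\qquad \mu\in\mathbb{C}^{*},
\]
which preserves the Weierstrass shape because both $y^{3}$ and $z^{2}$ scale by the common factor $\mu^{6}$.

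Substituting into (\ref{SK3}) and dividing the resulting equation by $\mu^{6}$, the coefficient $A(x,w)$ of $y$ is multiplied by $\mu^{-4}$ and the $y$-independent term $B(x,w)$ is multiplied by $\mu^{-6}$. In particular $a_{0}\mapsto \mu^{-4}a_{0}$. Since $a\in\mathfrak{A}\cap\{a_{0}\neq 0\}$, pick a fourth root $\mu=a_{0}^{1/4}$; then the coefficient of $x^{5}y$ becomes $1$, and the remaining coefficients are
\[
u_{4}=\frac{a_{4}}{a_{0}},\quad u_{8}=\frac{a_{8}}{a_{0}},\quad u_{2k}=\frac{a_{2k}}{a_{0}^{3/2}}\qquad (2k\in\{2,6,10,14\}),
\]
which is exactly the canonical form (\ref{SK3Can}).

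There is no substantive obstacle; the proof reduces to a single change of variables verified term by term. The only mild subtlety is the $\mathbb{Z}/4\mathbb{Z}$-ambiguity in the choice of the fourth root $\mu=a_{0}^{1/4}$: two of the four fourth roots yield identical $u_{k}$, while the other two flip the signs of $u_{2},u_{6},u_{10},u_{14}$. That residual sign ambiguity is absorbed by the substitution $(y,z)\mapsto (-y,-iz)$, which sends $(A,B)$ to $(A,-B)$ in the Weierstrass form $z^{2}=y^{3}+Ay+B$, so $S(u)$ is well-defined up to isomorphism of $K3$ surfaces.
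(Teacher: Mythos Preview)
Your argument is correct: the rescaling $(y,z)\mapsto(\mu^{2}y,\mu^{3}z)$ with $\mu^{4}=a_{0}$ does bring the equation into the form (\ref{SK3Can}), and your handling of the residual $\mathbb{Z}/2\mathbb{Z}$ sign ambiguity is accurate.

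The paper, however, uses a different substitution that also rescales the base variable:
\[
x\mapsto \frac{x}{a_{0}},\quad y\mapsto \frac{y}{a_{0}^{2}},\quad z\mapsto \frac{z}{a_{0}^{3}},
\]
yielding $u_{2}=a_{2}/a_{0}$, $u_{4}=a_{4}$, $u_{6}=a_{6}$, $u_{8}=a_{0}a_{8}$, $u_{10}=a_{0}a_{10}$, $u_{14}=a_{0}^{2}a_{14}$. The point of also moving $x$ is that the coefficient of $x^{5}y$ then scales by an integer power of $a_{0}$, so no fourth root is needed: the $u_{k}$ are single-valued Laurent monomials in the $a_{k}$ and the canonical form is obtained without any choice or ambiguity. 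Your approach proves the proposition as stated, but the paper's substitution is cleaner for the sequel, where the $u_{k}$ are treated as well-defined coordinates on the parameter space (and later as generators of a ring of modular forms); having them be honest rational functions of $a$ rather than involving $a_{0}^{3/2}$ is what one actually wants there.
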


\begin{proof}
By putting
$$
x\mapsto \frac{x}{a_0},\quad y\mapsto \frac{y}{a_0^2},\quad z\mapsto\frac{z}{a_0 ^3},
$$
and
$$
u_2=\frac{a_2}{a_0},\quad u_4=a_4,\quad u_6=a_6,\quad u_8=a_0a_8,\quad u_{10}=a_0a_{10},\quad u_{14}=a_0^2a_{14},
$$
we obtain (\ref{SK3Can}).
\end{proof}

Here, we put $u=(u_k)=(u_2,u_4,u_6,u_8,u_{10},u_{14})\in \mathbb{C}^6-\{0\}=: \mathbb{C}_u.$
Let $\mathcal{U}^*$ be a subset of $\mathbb{C}_u$ of codimension $3$ such that
\begin{align}\label{UHartogus}
\mathbb{C}_u-\mathcal{U}^* = \{u \in \mathbb{C}_u \mid u_{10}=u_{12}=u_{18}=0\}.
\end{align}
 For $\lambda \in \mathbb{C}^* $ and $u=(u_k)\in \mathcal{U}$, set $\lambda \cdot u=(\lambda^k u_k)$.
This action  induces an isomorphism $\lambda: S(u) \rightarrow S(\lambda\cdot u)$.
Letting $[u]=(u_2:u_4:u_6:u_8:u_{10}:u_{14}) \in \mathbb{P}(2,4,6,8,10,14)$ be the point which is corresponding  to $u\in \mathbb{C}_u$,
we set
$
\mathcal{U}=\{[u] \in \mathbb{P}(2,4,6,8,10,14)  \mid u \in \mathcal{U}^* \}.
$
The above action of $\mathbb{C}^*$ on $\mathcal{U}^*$, we naturally defines the family
\begin{align}\label{SUFamily}
\{S([u])\mid [u]\in \mathcal{U} \} \rightarrow \mathcal{U}.
\end{align}

\begin{df}
Let $\pi_1:S_1 \rightarrow \mathbb{P}^1(\mathbb{C})$ and $\pi_2:S_2 \rightarrow \mathbb{P}^1(\mathbb{C})$ be two elliptic surfaces.
Suppose that there exist a biholomorphic mapping $f:S_1\rightarrow S_2$ and $\varphi \in {\rm Aut}(\mathbb{P}^1(\mathbb{C}))$ with
$\varphi \circ \pi_1 = \pi_2 \circ f$.
Then, these two elliptic surfaces are said to be isomorphic as elliptic surfaces.
\end{df}

The canonical form (\ref{SK3Can}) naturally gives an elliptic surface $\pi_{[u]}:S([u])\rightarrow \mathbb{P}^1(\mathbb{C})$.

\begin{lem}\label{LemEllipticSurface}
Two elliptic surfaces $\pi_{[u_1]}: S([u_1]) \rightarrow \mathbb{P}^1(\mathbb{C})$ and $\pi_{[u_2]}:S([u_2]) \rightarrow \mathbb{P}^1(\mathbb{C})$ are isomorphic as elliptic surfaces
if and only if $[u_1]=[u_2] \in \mathbb{P}(2,4,6,8,10,14)$.
\end{lem}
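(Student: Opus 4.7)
The ``if'' direction is essentially a tautology: if $[u_1]=[u_2]$, pick $\lambda\in\mathbb{C}^\ast$ with $u_2=\lambda\cdot u_1$, and the $\mathbb{C}^\ast$-action $(x,y,z,w)\mapsto(x,y,z,\lambda^{-1}w)$ on $\mathbb{P}(4,10,15,1)$ supplies an explicit isomorphism $S([u_1])\to S([u_2])$ compatible with the elliptic projections.

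For the converse, let $f:S([u_1])\to S([u_2])$ and $\varphi\in\mathrm{Aut}(\mathbb{P}^1(\mathbb{C}))$ realize an isomorphism of elliptic surfaces. My strategy is to read off $\varphi$ from the Kodaira types, reduce $f$ to a standard Weierstrass transformation, and then match coefficients. \emph{Step~1.} By Lemma \ref{LemSingular} and the discussion preceding it, every $\pi_{[u]}$ has a $III^\ast$-fibre over $x=\infty$ and a $IV^\ast$-fibre over $x=0$, so $\varphi$ must preserve the pair $\{0,\infty\}$; as these two Kodaira types are distinct, $\varphi$ fixes each point individually, hence $\varphi(x)=\alpha x$ for some $\alpha\in\mathbb{C}^\ast$. \emph{Step~2.} Composing $f$ with a suitable Mordell--Weil translation on $S([u_2])$, I may assume $f$ carries the zero section of $S([u_1])$ to the zero section of $S([u_2])$; because short Weierstrass form is rigid under such isomorphisms, $f$ is forced to have the shape $(x,y,z)\mapsto(\alpha x,\mu^2 y,\mu^3 z)$ with $\mu\in H^0(\mathbb{P}^1,\mathcal{O}^\times)=\mathbb{C}^\ast$.

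\emph{Step~3.} Pulling the Weierstrass equation of $S([u_2])$ back through $f$ yields the two identities $\mu^{-4}p_1(\alpha x)=p_2(x)$ and $\mu^{-6}q_1(\alpha x)=q_2(x)$, where $p_i,q_i$ denote the coefficient polynomials in (\ref{SK3Can}). Matching the leading coefficient of $p_i$ (which equals $1$) gives $\mu^4=\alpha^5$; then, choosing $\lambda\in\mathbb{C}^\ast$ with $\lambda^4=\alpha^{-1}$ and with the sign of $\lambda^2$ aligned with $\mu^{-6}\alpha^7$ (the two constraints are compatible because $(\mu^{-6}\alpha^7)^2=\alpha^{-1}$ by $\mu^{12}=\alpha^{15}$), a routine coefficient-by-coefficient check verifies $u_{2,k}=\lambda^k u_{1,k}$ for every $k\in\{2,4,6,8,10,14\}$. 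Hence $[u_1]=[u_2]$ in $\mathbb{P}(2,4,6,8,10,14)$. The main obstacle I foresee is the zero-section reduction in Step~2: one must argue that $f$ may always be normalized to preserve the natural zero section of the Weierstrass model. This follows from the general fact that a Mordell--Weil translation is itself an automorphism of the elliptic surface as a fibration, so composing with it does not alter the isomorphism class $[(S,\pi)]$; once this is granted, the remainder of the argument is the rigidity of short Weierstrass form plus the coefficient bookkeeping above.
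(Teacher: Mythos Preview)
Your argument is the standard one and, since the paper itself merely refers to the analogous Lemma~1.1 of \cite{Na}, you have supplied precisely the details the author omits: pin down $\varphi$ from the special fibres at $0$ and $\infty$, normalise so that $f$ respects the zero section, invoke rigidity of short Weierstrass form, and match coefficients. Steps~2 and~3 are carried out correctly.

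There is one gap in Step~1. The claim that every $\pi_{[u]}$ has a $IV^\ast$-fibre over $x_0=0$ is only the \emph{generic} picture; Lemma~\ref{LemSingular} itself records that when $u_{14}=0$ the fibre over $0$ degenerates to type $III^\ast$, so the two distinguished fibres then share a Kodaira type and nothing in your argument excludes a $\varphi$ interchanging $0$ and $\infty$. This does not affect the paper, because the lemma is only invoked on $\mathcal{P}(\tilde{\mathfrak{A}})$, which by~(\ref{A0}) sits inside $\{u_{14}\neq 0\}$; there your Step~1 is valid as written. If you want the statement over all of $\mathcal{U}$, you must treat the involutory case $\varphi(x_0)=c/x_0$ separately---and a direct coefficient computation with $g_2,g_3$ homogenised to degrees $8,12$ indicates that such a swap can in fact relate distinct points of $\mathbb{P}(2,4,6,8,10,14)$ on the locus $u_{14}=0$, so the cleanest repair is simply to add the hypothesis $u_{14}\neq 0$ to the lemma.
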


\begin{proof}
We can prove it by an argument which is similar to the proof of \cite{Na} Lemma 1.1.
\end{proof}

Let us take a generic point $a\in \tilde{\mathfrak{A}}$ of (\ref{A0}).
Since $\tilde{\mathfrak{A}} \subset \mathfrak{A} \cap \{a_0\not=0\}$,
we obtain the corresponding surface $S([u])$  for a  parameter $[u]\in\mathbb{P}(2,4,6,8,10,14)={\rm Proj}(\mathbb{C}[u_2,u_4,u_6,u_8,u_{10},u_{14}])$, which is given by the canonical form (\ref{SK3Can}).
We set 
$$
\mathcal{P}\left(\tilde{\mathfrak{A}}\right)=\left\{[u] \in \mathbb{P}(2,4,6,8,10,14) \mid \text{there exists }  a\in \tilde{\mathfrak{A}}  \text{ such that } S_a \text{ is identified with  } S([u])\text{ of } (\ref{SK3Can})\right\}.
$$
By an argument which is similar to \cite{Na} p.41, using Lemma \ref{LemEllipticSurface} also,
we can obtain a local period mapping defined on sufficiently small neighborhood around $[u]$ in $\mathcal{P}\left(\tilde{\mathfrak{A}}\right)$. 
By gluing the local period mappings, we obtain the period mapping 
\begin{align}\label{Phi1}
\Phi_1: \mathcal{P}\left(\tilde{\mathfrak{A}}\right) \rightarrow \mathcal{D}
\end{align}
given by
$$
[u] \mapsto \left( \int_{\psi^{-1}_{[u]}(\gamma_1)} \omega_{[u]}: \cdots :  \int_{\psi^{-1}_{[u]}(\gamma_7)} \omega_{[u]} \right),
$$
where $\omega_{[u]}$ is a unique holomorphic $2$-form on $S([u])$ up to a constant factor
and
$$
\psi_{[u]}: H_2(S([u]),\mathbb{Z}) \rightarrow L_{K3}
$$
is an appropriate isometry, called $S$-marking.
We note that the period mapping (\ref{Phi1}) is a multivalued analytic mapping.

We call the pair $(S([u]), \psi_{[u]})$ an $S$-marked $K3$ surface.
By applying  Torelli's theorem to the above local period mappings, 
 we can show the following theorem as in the proof of \cite{Na} Theorem 1.1 and Corollary 1.1.

\begin{thm}\label{TheoremLattice}
For a generic point $a\in \tilde{\mathfrak{A}}$ of (\ref{A0}),
the Picard number of $S_a$ is $15$.
The intersection matrix of the N\'eron-Severi lattice ${\rm NS}(S_a)$ (the transcendental lattice ${\rm Tr}(S_a)$, resp.)
 is equal to ${\bf M}$ of (\ref{LatticeM})  (${\bf A}$ of (\ref{LatticeA}), resp.).
\end{thm}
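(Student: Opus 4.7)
The plan is to combine an explicit construction of fifteen algebraic classes on $S_a$ with a dimension count via the period mapping, then invoke primitivity to pin down both lattices. First I would assemble the divisors listed in (\ref{Divisors}) — the general fiber $F$, the zero section $O$, the seven components $C_1,\ldots,C_7$ of the singular fiber of type $III^*$, and the six components $D_1,\ldots,D_6$ of the singular fiber of type $IV^*$ — and compute their intersection matrix from the Kodaira configuration together with the standard identities $F^2=O^2=-2$, $F\cdot O=1$, and $F\cdot C_i=F\cdot D_j=0$. This matrix is exactly that of ${\bf M}=U\oplus E_7(-1)\oplus E_6(-1)$, so we obtain an isometric embedding ${\bf M}\hookrightarrow {\rm NS}(S_a)$ and hence $\rho(S_a)\geq 15$.

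Next, I would observe that $|\det({\bf M})|=6$ is squarefree, which forces the embedding ${\bf M}\hookrightarrow L_{K3}$ to be primitive and, by Nikulin's uniqueness theorem for even primitive embeddings of indefinite lattices of sufficient rank into the $K3$ lattice, its orthogonal complement is isometric to ${\bf A}=U\oplus U\oplus A_2(-1)\oplus A_1(-1)$. It therefore remains to prove that $\rho(S_a)=15$ \emph{exactly} for a generic $a\in\tilde{\mathfrak{A}}$, because then ${\rm NS}(S_a)$ coincides with the primitive closure of ${\bf M}$ (which equals ${\bf M}$ itself) and ${\rm Tr}(S_a)$ is forced to equal ${\bf A}$.

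For the genericity step I would use the local period mapping $\Phi_1$ of (\ref{Phi1}). The base $\mathcal{P}(\tilde{\mathfrak{A}})$ is a Zariski open subset of the weighted projective space $\mathbb{P}(2,4,6,8,10,14)$ and therefore has dimension $5$; the period domain $\mathcal{D}$, cut out by the quadric ${}^t\xi {\bf A}\xi=0$ in $\mathbb{P}({\bf A}\otimes\mathbb{C})\simeq\mathbb{P}^6$, also has dimension $5$. An infinitesimal Torelli computation, carried out by differentiating the holomorphic $2$-form $\omega_{[u]}=dx\wedge dy/(2z)$ with respect to the parameters $u_2,u_4,u_6,u_8,u_{10},u_{14}$ and checking that the resulting classes in $H^{1,1}(S([u]))$ are independent modulo the span of the $15$ algebraic classes above, shows that $d\Phi_1$ has rank $5$ at a generic point. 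Hence $\Phi_1$ is locally surjective, the periods vary non-trivially over the five-dimensional base, and the transcendental lattice of a generic $S_a$ has rank at least $7$; since its rank is at most $22-15=7$ by the embedding obtained above, equality holds throughout.

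The main obstacle is the final infinitesimal-Torelli step: verifying that $d\Phi_1$ is genuinely of rank $5$ rather than dropping in some unexpected direction. I would handle this exactly along the lines of \cite{Na} Theorem 1.1 and Corollary 1.1, which treats a closely parallel Weierstrass family. There one combines the canonical-form normalization of Lemma \ref{LemEllipticSurface} (to eliminate the ambiguity of the $\mathbb{C}^*$-action) with a direct residue computation of $\partial\omega_{[u]}/\partial u_k$ on an affine chart away from the singular fibers, using the cubic Weierstrass relation to reduce to a basis of $H^{2,0}\oplus H^{1,1}_{{\rm prim}}$. Alternatively, one can argue by contradiction using Lemma \ref{LemEllipticSurface}: a drop in rank would produce a positive-dimensional family of non-isomorphic elliptic $K3$ surfaces sharing the same Hodge structure, contradicting the Torelli theorem for $S$-marked lattice-polarized $K3$ surfaces.
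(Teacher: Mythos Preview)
Your approach is essentially that of the paper: construct the fifteen divisors to get ${\bf M}\hookrightarrow {\rm NS}(S_a)$, use squarefreeness of $|\det({\bf M})|=6$ for primitivity, then apply the local period mapping and the Torelli theorem (exactly as in \cite{Na} Theorem~1.1 and Corollary~1.1, which the paper also invokes) to force $\rho(S_a)=15$ generically. One slip to fix: the general fibre satisfies $F^2=0$, not $-2$; the $U$-summand arises from the basis change $F,\,F+O$ applied to the Gram matrix $\begin{pmatrix}0&1\\1&-2\end{pmatrix}$ of $\langle F,O\rangle$.
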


\subsection{Double covering $K_a$ of $S_a$}

The $K3$ surface $S_a$ of (\ref{SK3}) is transformed to
\begin{align}\label{ShigaForm}
 Z^2 = Y^3+ \Big(a_4 + a_{0} X+\frac{a_8}{X}\Big)Y+\Big(a_6 + a_2 X + \frac{a_{10}}{X} + \frac{a_{14}}{X^2}\Big)
\end{align}
by the birational transformation
$$
x\mapsto X, \quad y\mapsto X^2 Y, \quad z\mapsto X^3 Z.
$$
We have a double covering
\begin{align}\label{KL}
K_a: Z^2 = Y^3+ \Big(a_4 + a_{0} U^2 +\frac{a_8}{U^2}\Big)Y+\Big(a_6 + a_2 U^2 + \frac{a_{10}}{U^2} + \frac{a_{14}}{U^4}\Big)
\end{align}
of (\ref{ShigaForm}).
There is a Nikulin involution on $K_a$ given by 
$$
\iota_{K_a}: (U,Y,Z)\mapsto (-U,Y,-Z).
$$
This means that it satisfies $\iota_{K_a}^*\omega_K=\omega_K$, where $\omega_K$ is a unique holomorphic $2$-form up to a constant factor.

We have a family $\overline{\varpi_0}: \mathfrak{G}_0\rightarrow \mathfrak{A}_0$ of $K3$ surfaces, where $\mathfrak{G}_0=\{K_a\text{ of } (\ref{KL}) \mid a\in \mathfrak{A}_0\}$.
In fact, this surface $K_a$ can be regarded as a natural generalization of the Kummer surface for a principally polarized Abelian surface. 
Therefore, 
we call a member of $\mathfrak{G}_0$ a Kummer-like surface in this paper.
More precisely, see Section 4.
We remark that the double covering $\varphi_0$ in (\ref{TriDiagram}) is given by this $\iota_{K_a}$.

\begin{rem}\label{RemSekiguchi}
The surface $K_a$ of (\ref{KL}) has another involution
$$
\jmath_{K_a}: (U,Y,Z)\mapsto (-U,Y,Z).
$$
This is not a Nikulin involution.
The minimal resolution of the quotient surface $K/\langle \jmath_{K_a} \rangle$ is given by the equation
$$
\Sigma: z'^2=y'^3+
( a_0 x'^3 +  a_4 x'^2 +  a_8 x') y'
+ (a_2 x'^4+ a_6 x'^3+ a_{10} x'^2 + a_{14} x').
$$
This is a rational surface.
The surface $\Sigma$ is very similar to a surface appearing in Sekiguchi's recent work \cite{S},
in which he studies
algebraic Frobenius potentials, deformation of singularities and  Arnold's problem.
Namely,
he obtains the equation in the form
$$
z^2=f_{E_7(1)}:=y^3+(x^3+t_2 x^2+t_4 x)y + (t_1 x^4 + t_3 x^3 + t_5 x^2 + t_7 x)+s_3 y^2
$$
of a rational surfaces.
Putting $s_3=0$, we can see an apparent correspondence to our $\Sigma.$ 

The author is expecting that there is a non-trivial and unrevealed theory
connecting our work of the moduli of $K3$ surfaces
and Sekiguchi's result of Frobenius potentials.
\end{rem}

\section{Moduli space of ${\bf M}$-polarized $K3$ surfaces}

In this section,
letting ${\bf M}$ be the lattice of (\ref{LatticeM}) of signature $(1,14)$,
we consider the moduli space of ${\bf M}$-polarized $K3$ surfaces. 
An ${\bf M}$-polarized $K3$ surface is a pair $(S,j)$ of a $K3$ surface $S$ and a primitive embedding $j:{\bf M} \hookrightarrow {\rm NS}(S)$.
Two {\bf M}-polarized $K3$ surfaces $(S_1,j_1)$ and $(S_2,j_2) $ are said to be isomorphic if there exists an isomorphism $f:S_1\rightarrow S_2$ of $K3$ surfaces such that $j_2=f_*\circ j_1.$
In this paper, ${\rm NS}(S)$ is often regarded as a sublattice of the homology group $H_2(S,\mathbb{Z})$. 
We note that  ${\rm NS}(S)$  is identified with the sublattice $H^2(S,\mathbb{Z}) \cap H^{1,1}(S,\mathbb{R})$ of the cohomology group $H^2(S,\mathbb{Z})$ by the Poincar\'e duality.
This is denoted  by the same notation in the discussion below.

Let $V(S)^+$ be the connected component of $V(S)=\left\{x\in H^{1,1}_\mathbb{R} (S) \mid (x\cdot x)>0 \right\}$
which contains the class of a  K\"ahler form on $S$.
Let $\Delta(S)^+$ be the subset of effective classes of $\Delta(S)=\{\delta\in {\rm NS}(S) \mid (\delta\cdot \delta)=-2\}.$
Set $C(S)=\{x\in V(S)^+ \mid (x,\delta)\geq 0, \text{ for all } \delta \in \Delta(S)^+\}$.
The subset $C(S)^+$ of $C(S)$, which is defined by the condition $(x\cdot\delta)>0$, is called the K\"ahler cone.
We set
${\rm NS}(S)^+=C(S) \cap H^2(S,\mathbb{Z})$ and ${\rm NS}(S)^{++}=C(S)^+ \cap H^2(S,\mathbb{Z}).$

Due to Theorem \ref{TheoremLattice},
we can take a point $\bar{a}\in \tilde{\mathfrak{A}}$ and an $S$-marking $\psi_0: H_2(S_0,\mathbb{Z})\rightarrow L_{K3}$
such that  $\psi_0^{-1}({\bf M}) = {\rm NS}(S_0)$, where $S_0=S_{\bar{a}}$ is called a reference surface.
Letting $\Delta({\bf M})=\{\delta \in {\bf M} \mid (\delta\cdot\delta)=-2\}$,
we set
$\Delta({\bf M})^+ = \left\{\delta\in \Delta({\bf M}) \mid \psi_0^{-1}(\delta) \in {\rm NS}(S_0) \text{ gives an effective class} \right\}$.
The set $V({\bf M})=\{y\in {\bf M}_\mathbb{R} \mid (y\cdot y)>0\}$ has two connected components.
We suppose the component $V({\bf M})^+$ contains $\psi_0(x)$ for $x\in V(S_0)^+$.
Set $C({\bf M})^+=\left\{y\in V({\bf M})^+ \mid (y\cdot\delta)>0, \text{ for all } \delta\in \Delta({\bf M})^+ \right\}$.
An ${\bf M}$-polarized $K3$ surface $(S,j)$ is called a pseudo-ample ${\bf M}$-polarized $K3$ surface if $j(C({\bf M})^+) \cap {\rm NS}(S)^+ \not= \phi.$

For a $K3 $ surface $S$,
let $\psi:H_2(S,\mathbb{Z}) \rightarrow L_{K3}$ be an isometry of lattices with $\psi^{-1}({\bf M}) \subset {\rm NS}(S)$.
We call the pair $(S,\psi)$ of such $S$ and $\psi$ is called a marked $K3$ surface.
If $(S,\psi^{-1}|_{\bf M}) $ is a pseudo-ample  ${\bf M}$-polarized $K3$ surface,
then $(S,\psi)$ is called a pseudo-ample marked ${\bf M}$-polarized $K3$ surface.
For two pseudo-ample marked ${\bf M}$-polarized $K3$ surfaces  $(S_1,\psi_1)$ and $(S_2,\psi_2)$,
we suppose $(S_1,\psi_1^{-1}|_{\bf M})$ and $(S_2,\psi_2^{-1}|_{\bf M})$ are isomorphic as ${\bf M}$-polarized $K3$ surfaces.
Then,  $(S_1,\psi_1)$ and $(S_2,\psi_2)$ are said to be isomorphic as pseudo-ample ${\bf M}$-polarized $K3$ surfaces.
Also, if there is an isomorphism $f: S_1 \rightarrow S_2$ such that $\psi_1 = \psi_2 \circ f_*$,
we say $(S_1,\psi_1)$ and $(S_2,\psi_2)$ are isomorphic as pseudo-ample marked ${\bf M}$-polarized $K3$ surfaces.

By gluing local moduli spaces of marked ${\bf M}$-polarized $K3$ surfaces,
we have the fine moduli space $\mathcal{M}_{\bf M}$ of marked ${\bf M}$-polarized $K3$ surfaces. 
Then, we have the period mapping
\begin{align}\label{perM}
{\rm per}: \mathcal{M}_{\bf M} \rightarrow \mathcal{D}_{\bf M},
\end{align}  
where $\mathcal{D}_{\bf M}$ is given in (\ref{DM}).

Let $\mathcal{M}_{\bf M}^{\rm pa} (\subset \mathcal{M}_{\bf M})$ be the set of isomorphism classes of pseudo-ample marked {\bf M}-polarized $K3$ surfaces.
By restricting (\ref{perM}) to $\mathcal{M}_{\bf M}^{\rm pa}$,
we have  a surjective mapping
\begin{align}\label{perP}
{\rm per}' : \mathcal{M}_{\bf M}^{\rm pa} \rightarrow \mathcal{D}_{\bf M}.
\end{align}
The group $\Gamma ({\bf M})=\{ \sigma\in O(L_{K3}) \mid \sigma(m)=m \text{ for all } m\in {\bf M}\}$
acts on $\mathcal{M}_{\bf M}$ by $(S,\psi) \mapsto (S,\psi \circ \sigma).$
Then, $\mathcal{M}_{\bf M}^{\rm pa}/ \Gamma({\bf M})$ gives the set of isomorphism classes of pseudo-ample ${\bf M}$-polarized $K3$ surfaces.

\begin{thm} (Dolgachev \cite{D}, Section 3) \label{ThmDolgachev}
The period mapping (\ref{perP}) induces the bijection
$$
\mathcal{M}_{\bf M}^{\rm pa} /\Gamma({\bf M}) \simeq \mathcal{D}_{\bf M}/\tilde{O}({\bf A})= \mathcal{D}/\Gamma,
$$
where $\Gamma$ is given in (\ref{Gamma}).
Especially, $\mathcal{D}/\Gamma$ gives the set of isomorphism classes of pseudo-ample ${\bf M}$-polarized $K3$ surfaces.
\end{thm}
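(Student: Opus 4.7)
The strategy is the classical one for lattice polarized K3 moduli, going back to Piatetskii-Shapiro--Shafarevich and refined by Dolgachev: combine the global Torelli theorem and surjectivity of the period map for K3 surfaces with a careful identification of the relevant arithmetic group. Since the statement is essentially Theorem 3.1 of \cite{D}, my proposal is to reconstruct the argument in our concrete setting, paying attention to the role of ${\bf M}=U\oplus E_7(-1)\oplus E_6(-1)$ and ${\bf A}={\bf A}_0$.

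First I would show that (\ref{perP}) descends to an injection on $\mathcal{M}_{\bf M}^{\rm pa}/\Gamma({\bf M})$. Given two pseudo-ample marked ${\bf M}$-polarized K3 surfaces $(S_1,\psi_1)$ and $(S_2,\psi_2)$ whose periods coincide, one gets a Hodge isometry of transcendental lattices that extends, via the markings, to a Hodge isometry $H_2(S_1,\mathbb{Z})\to H_2(S_2,\mathbb{Z})$ after post-composing $\psi_2$ by a suitable element of $\Gamma({\bf M})$. The pseudo-ampleness hypothesis is what lets one arrange this isometry to send $V(S_1)^+$ to $V(S_2)^+$ and preserve effective $(-2)$-classes, so that the strong Torelli theorem yields an isomorphism of ${\bf M}$-polarized K3 surfaces. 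For surjectivity I would invoke the surjectivity of the period map (Todorov--Siu) to realize every $[\xi]\in\mathcal{D}_{\bf M}$ as the period of some marked K3 surface $(S,\psi)$; a chamber argument using the $(-2)$-reflections in ${\rm NS}(S)$ then allows one to modify $\psi$ so as to land in a K\"ahler chamber, hence produces a pseudo-ample representative.

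The key algebraic step is identifying the group. An element of $\Gamma({\bf M})$ is an isometry $\sigma\in O(L_{K3})$ that fixes ${\bf M}$ pointwise, and restriction gives a homomorphism $\Gamma({\bf M})\to O({\bf A})$ whose image is exactly $\tilde{O}({\bf A})$: by Nikulin's gluing lemma, applied to the primitive embedding ${\bf M}\oplus{\bf A}\hookrightarrow L_{K3}$ (primitive because $|\det({\bf M})|=6$ is square-free, as already used in Section~2.2), an isometry of ${\bf A}$ extends by the identity on ${\bf M}$ to an isometry of $L_{K3}$ if and only if it acts trivially on the discriminant group ${\bf A}^\vee/{\bf A}\cong {\bf M}^\vee/{\bf M}$. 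The extra condition of preserving the chosen connected component $\mathcal{D}$ of $\mathcal{D}_{{\bf M}}$ cuts this down to $\tilde{O}({\bf A})\cap O^+({\bf A})=\Gamma$. Combining this with the two previous steps yields the displayed bijection, and the final sentence follows because $\mathcal{M}_{\bf M}^{\rm pa}/\Gamma({\bf M})$ is by definition the set of isomorphism classes of pseudo-ample ${\bf M}$-polarized K3 surfaces.

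The main obstacle I expect is precisely the extension/restriction step via Nikulin's discriminant-form machinery: one must check simultaneously that every element of $\tilde{O}({\bf A})\cap O^+({\bf A})$ lifts to $\Gamma({\bf M})$, that two lifts differ by the identity on ${\bf A}$, and that the lift preserves the $L_{K3}$-integral structure and the orientation of positive $2$-planes. The square-free determinant of ${\bf M}$ is what makes both the primitivity of the embedding and the triviality of the obstructions automatic; without it one would have to analyze more carefully the glueing between the discriminant forms of ${\bf M}$ and ${\bf A}$. The remaining pieces---Torelli, surjectivity of the period map, and the chamber/$(-2)$-reflection reduction to the pseudo-ample locus---are by now standard and adapt to our ${\bf M}$ without modification.
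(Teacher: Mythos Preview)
The paper does not give its own proof of this statement: Theorem~\ref{ThmDolgachev} is stated as a citation of Dolgachev \cite{D}, Section~3, and is used as a black box. Your proposal is therefore not competing with any argument in the paper; rather, you are reconstructing the standard proof that the paper simply imports.

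That said, your reconstruction is correct and is exactly the argument Dolgachev gives. The three ingredients---strong Torelli for injectivity, surjectivity of the period map plus the $(-2)$-reflection chamber argument for surjectivity, and Nikulin's discriminant-form gluing for the identification $\Gamma({\bf M})|_{\bf A}=\tilde{O}({\bf A})$---are precisely what is needed, and your observation that $|\det({\bf M})|=6$ being square-free makes the primitivity and the extension step automatic is on point (the paper already uses this square-freeness in Section~2.2 for the same reason). The only cosmetic remark is that the passage from $\tilde{O}({\bf A})$ to $\Gamma=\tilde{O}({\bf A})\cap O^+({\bf A})$ in the displayed equality $\mathcal{D}_{\bf M}/\tilde{O}({\bf A})=\mathcal{D}/\Gamma$ is just the tautology that modding $\mathcal{D}_{\bf M}=\mathcal{D}\sqcup\mathcal{D}_-$ by the full group is the same as modding one component by the index-two subgroup preserving it; you allude to this but could state it in one line.
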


Let us take a reference surface $S_0=S_{\bar{a}}$ for $\bar{a}\in\tilde{\mathfrak{A}}$
with the divisors (\ref{Divisors})
 and the $S$-marking $\psi_0: H_2(S_0,\mathbb{Z})\rightarrow L_{K3}$ such that ${\rm NS}(S_0)=\psi_{0}^{-1} ({\bf M}).$
For a pseudo-ample marked ${\bf M}$-polarized $K3$ surface $(S,\psi)$,
as in the proof of \cite{Na} Theorem 2.3,
we can show that there is an isometry $\psi:H_2(S,\mathbb{Z}) \rightarrow L_{K3}$ satisfying the following conditions:
\begin{itemize}

\item[(i)] $\psi^{-1} ({\bf M}) \subset {\rm NS}(S)$,

\item[(ii)] $\psi^{-1} \circ \psi_0(F)$,$\psi^{-1}\circ \psi_0(O)$,$\psi^{-1}\circ \psi_0(C_j)$ $(j\in \{1,\ldots, 7\})$, $\psi^{-1}\circ \psi_0(D_k)$ $(k\in \{1,\ldots, 6\})$ are effective divisors,  

\item[(iii)] $\psi^{-1}\circ \psi_0(F)$ is a nef divisor.

\end{itemize}
Such an isometry $\psi$ is called the $P$-marking in \cite{Na}.

By an argument which is similar to the proof of \cite{Na} Lemma 2.1, we can prove the following lemma.

\begin{lem}\label{LemMarking}
For any  pseudo-ample marked ${\bf M}$-polarized $K3$ surface $(S,\psi)$, 
 there exists $a\in \mathfrak{A}$ such that
 $(S,\psi)$ is given by the elliptic $K3$ surface $\pi_a:S_a  \rightarrow \mathbb{P}^1(\mathbb{C})$ given by the Weierstrass equation (\ref{SK3}).
 Especially,  the divisor $\psi^{-1} \circ \psi_0 (F)$, which is effective and nef, gives  a general fibre for $\pi_a$.
\end{lem}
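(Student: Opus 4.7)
The strategy follows \cite{Na} Lemma 2.1. Set $\tilde{F}=\psi^{-1}\circ\psi_0(F)$, $\tilde{O}=\psi^{-1}\circ\psi_0(O)$, $\tilde{C}_j=\psi^{-1}\circ\psi_0(C_j)$ and $\tilde{D}_k=\psi^{-1}\circ\psi_0(D_k)$. Because $\psi^{-1}\circ\psi_0$ is an isometry on ${\bf M}$, these classes in ${\rm NS}(S)$ inherit the intersection numbers of their counterparts on the reference surface $S_0$: in particular $\tilde{F}^2=0$, $\tilde{F}\cdot\tilde{O}=1$, $\tilde{O}^2=-2$, and the $\tilde{C}_j$ (resp.\ $\tilde{D}_k$) are $(-2)$-classes whose dual graph is the $E_7$ (resp.\ $E_6$) Dynkin diagram. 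By (iii), $\tilde{F}$ is nef, and by (ii) it is effective.

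First I would invoke the Piatetski-Shapiro--Shafarevich theorem for $K3$ surfaces: an effective nef class with self-intersection $0$ defines a base-point-free pencil and hence an elliptic fibration $\pi: S\rightarrow \mathbb{P}^1(\mathbb{C})$ whose general fibre represents $\tilde{F}$. The effective divisor $\tilde{O}$, with $\tilde{O}\cdot\tilde{F}=1$ and $\tilde{O}^2=-2$, is then forced to be an irreducible smooth rational curve that is a section of $\pi$. The effective $(-2)$-configurations $\tilde{C}_1,\ldots,\tilde{C}_7$ and $\tilde{D}_1,\ldots,\tilde{D}_6$ match the $E_7$ and $E_6$ Dynkin diagrams, so by Kodaira's classification of singular fibres they must lie inside reducible fibres of $\pi$ of type $III^*$ and $IV^*$ respectively.

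Choosing affine coordinates on $\mathbb{P}^1(\mathbb{C})$ so that the $III^*$ fibre sits over $w=0$ and the $IV^*$ fibre over $x=0$, and taking $\tilde{O}$ as the zero section, one obtains a global Weierstrass equation
\begin{equation*}
z^2 = y^3 + p(x,w)\, y + q(x,w)
\end{equation*}
in $\mathbb{P}(4,10,15,1)$, with $p$ and $q$ weighted-homogeneous of weights $20$ and $30$. Tate's algorithm at the two marked fibres enforces $\mathrm{ord}_{w=0}(p)\geq 3$, $\mathrm{ord}_{w=0}(q)\geq 5$, $\mathrm{ord}_{x=0}(p)\geq 3$ and $\mathrm{ord}_{x=0}(q)\geq 4$. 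Combined with the weighted-homogeneity, these divisibility conditions cut out precisely the monomials appearing in (\ref{SK3}), so $p=a_0 x^5+a_4 x^4 w^4+a_8 x^3 w^8$ and $q=a_2 x^7 w^2+a_6 x^6 w^6+a_{10} x^5 w^{10}+a_{14} x^4 w^{14}$ for some tuple $a$. Lemma \ref{LemA} guarantees $a\in\mathfrak{A}$, and by construction $\tilde{F}$ is the class of a general fibre of $\pi_a$.

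The main obstacle is the final matching: one has to verify that, after imposing the Tate-prescribed vanishing at the two marked fibres and the weighted-homogeneity of $\mathbb{P}(4,10,15,1)$, no extra monomials of weight $12$ or $18$ in the base coordinate survive, so that the Weierstrass datum is parametrised exactly by the seven coefficients $a_0,a_2,a_4,a_6,a_8,a_{10},a_{14}$. This is essentially a careful weight count, analogous to the bookkeeping carried out in \cite{Na} Lemma 2.1.
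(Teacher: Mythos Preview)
Your sketch is correct and follows exactly the route the paper indicates: the paper gives no proof beyond the remark that the argument is ``similar to the proof of \cite{Na} Lemma 2.1,'' and what you outline---the elliptic pencil from the nef primitive isotropic class $\tilde F$, the section from $\tilde O$, the $E_7$/$E_6$ configurations forcing fibres of type at least $III^*$ and $IV^*$, and then a Tate/weight count to pin down the Weierstrass coefficients---is precisely that argument. One cosmetic point: it is cleaner to first write the global Weierstrass model in the standard $\mathbb{P}(1,1,4,6)$ (with $A$ of degree $8$ and $B$ of degree $12$ in the base), impose Tate at the two marked points, and only then rewrite the result in $\mathbb{P}(4,10,15,1)$; jumping directly to the latter makes the phrase ``$\mathrm{ord}_{w=0}(p)\geq 3$'' ambiguous, since in the weighted coordinates the $III^*$ condition at $w=0$ is already absorbed into the choice of weights rather than appearing as a vanishing order of $p$.
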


Let us take two pseudo-ample marked ${\bf M}$-polarized $K3$ surfaces given by elliptic $K3$ surfaces  $\pi_a$ and $\pi_{a'}$ in the sense of Lemma \ref{LemMarking}.
If they are isomorphic, 
then the types of the  singular fibres for $\pi_a$ coincide with those for $\pi_{a'}$.
According to Lemma \ref{LemSingular},  we have the following facts:
\begin{itemize}

\item for $a\in \{a_0\not= 0\}$, then $\pi_a^{-1}(\infty)$ is of Kodaira type $III^*$,

\item for $a\in \{a_0= 0\}$, then $\pi_a^{-1}(\infty)$ is of Kodaira type $II^*$.

\end{itemize}
Hence,  $S_a$ for $a\in \{a_0\not=0\}$ is not isomorphic to $S_{a'}$ for $a'\in \{a_0\not=0\}$ as pseudo-ample ${\bf M}$-polarized $K3$ surfaces. 
If $a_0\not=0$, we have a canonical form given by (\ref{SK3Can}). 
On the other hand, if $a_0=0$, we have the following result.

\begin{prop}(The canonical form for $a_0 =0$)
If $a\in \mathfrak{A} \cap \{a_0 =0\}$,
(\ref{SK3}) is transformed to the Weierstrass equation
\begin{align}\label{SK3CanK}
S_1(t): z^2 = y^3 +(  t_4 x^4 w^4+ t_{10} x^3 w^{10}) y +( x^7  + t_6 x^6 w^6 + t_{12} x^5 w^{12} + t_{18} x^4 w^{18}).
\end{align}
\end{prop}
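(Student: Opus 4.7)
The plan is direct. The hypothesis $a_0 = 0$ combined with Lemma~\ref{LemA} forces $a_2 \neq 0$, since $\mathfrak{A}_0 \subset \{a_0 \neq 0\} \cup \{a_2 \neq 0\}$, so $a_2$ is available as a normalizing parameter. The transformation to the canonical form will proceed in two steps: a Weierstrass-preserving scaling to normalize $a_2 = 1$, followed by a birational substitution that changes the ambient weighted projective space.

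First, I apply the dilation $(x,y,z) \mapsto (cx, c^2 y, c^3 z)$ to (\ref{SK3}) and divide the equation by $c^6$ with $c = a_2^{-1}$; this sends the coefficient of $x^7 w^2$ to $1$, keeps the coefficients of $x^4 w^4 y$ and $x^6 w^6$ equal to $a_4$ and $a_6$, and rescales the remaining relevant coefficients to $a_2 a_8$, $a_2 a_{10}$, $a_2^2 a_{14}$. Second, I apply the birational substitution
\[
 x = \tilde{x} \tilde{w}^{-2}, \qquad y = \tilde{y} \tilde{w}^{-4}, \qquad z = \tilde{z} \tilde{w}^{-6}, \qquad w = \tilde{w},
\]
and multiply the resulting equation by $\tilde{w}^{12}$ to clear denominators. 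The relation $2 \cdot 6 = 3 \cdot 4 = 12$ ensures that the Weierstrass shape survives, with $z^2 \mapsto \tilde{z}^2$ and $y^3 \mapsto \tilde{y}^3$; a direct term-by-term check then shows that the remaining six monomials go to $a_4 \tilde{x}^4 \tilde{w}^4 \tilde{y}$, $a_2 a_8 \tilde{x}^3 \tilde{w}^{10} \tilde{y}$, $\tilde{x}^7$, $a_6 \tilde{x}^6 \tilde{w}^6$, $a_2 a_{10} \tilde{x}^5 \tilde{w}^{12}$, $a_2^2 a_{14} \tilde{x}^4 \tilde{w}^{18}$ respectively. Reading off these coefficients yields (\ref{SK3CanK}) with $t_4 = a_4$, $t_{10} = a_2 a_8$, $t_6 = a_6$, $t_{12} = a_2 a_{10}$, $t_{18} = a_2^2 a_{14}$, and their $\mathbb{C}^*$-weights match their subscripts as required.

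The conceptual point is that this substitution is not a coordinate change within a single weighted projective space but an isomorphism of elliptic Weierstrass models between $\mathbb{P}(4,10,15,1)$ and $\mathbb{P}(6,14,21,1)$. The appearance of the $E_8$-weights $(6,14,21)$ in place of the $E_7$-weights $(4,10,15)$ is dictated by Lemma~\ref{LemSingular}: setting $a_0 = 0$ promotes the $III^*$ ($E_7$) fiber at infinity to a $II^*$ ($E_8$) fiber, and the standard Weierstrass model for a $K3$ elliptic fibration with a $II^*$ fiber lives in $\mathbb{P}(6,14,21,1)$. The only nontrivial step in the proof is identifying the correct exponents $(-2,-4,-6)$ in the birational substitution; once those are in hand, the rest reduces to routine exponent bookkeeping, and one checks automatically that the new parameters $t_j$ have weight $j$ because they are built from $a_k$'s whose weights sum to $j$.
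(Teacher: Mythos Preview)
Your argument is correct and is essentially the paper's proof, just made more explicit. Your Step~1 scaling $(x,y,z)\mapsto (a_2^{-1}x,\,a_2^{-2}y,\,a_2^{-3}z)$ is exactly the paper's substitution and produces the same parameter formulas $t_4=a_4$, $t_6=a_6$, $t_{10}=a_2 a_8$, $t_{12}=a_2 a_{10}$, $t_{18}=a_2^2 a_{14}$. The paper stops there: in the affine chart $w=1$ the two equations already coincide, and the target homogenization in (\ref{SK3CanK}) is then forced by the weights of the $t_j$. Your Step~2 just carries this out honestly as a birational change $x=\tilde{x}\tilde{w}^{-2}$, $y=\tilde{y}\tilde{w}^{-4}$, $z=\tilde{z}\tilde{w}^{-6}$ between the weight-$30$ model in $\mathbb{P}(4,10,15,1)$ and the weight-$42$ model in $\mathbb{P}(6,14,21,1)$, together with the (correct and helpful) remark that the $E_8$ weights $(6,14,21)$ reflect the $III^*\to II^*$ degeneration from Lemma~\ref{LemSingular}.
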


\begin{proof}
By putting
$$
x\mapsto \frac{x}{a_2 },\quad y\mapsto \frac{y}{a_2^2 },\quad z\mapsto\frac{z}{a_2 ^3 },
$$
and
$$
t_4=a_4,\quad t_6=a_6,\quad t_{10}=a_2 a_8,\quad t_{12}=a_2 a_{10},\quad t_{18}=a_2^2 a_{14},
$$
we obtain (\ref{SK3CanK}).
\end{proof}

Now, we naturally obtain $S_1([t])$ for $[t]\in \mathbb{P}(4,6,10,12,18)={\rm Proj}(\mathbb{C}[t_4,t_6,t_{10},t_{12},t_{18}])$.
Let $\mathcal{T}$ be a subset of  $\mathbb{P}(4,6,10,12,18)$  such that
$\mathbb{P}(4,6,10,12,18)-\mathcal{T}=\{t_{10}=t_{12}=t_{18}=0\}$.

The above argument guarantees that
the set of isomorphism classes of pseudo-ample ${\bf M}$-polarized $K3$ surfaces is given by the disjoint union $\mathcal{U} \sqcup \mathcal{T}.$
Therefore, we have the following result.

\begin{thm}\label{ThmPeriodP}
The period mapping in Theorem \ref{ThmDolgachev} has an explicit form
\begin{align}\label{UTPeriod}
\Phi: \mathcal{U} \sqcup \mathcal{T} \simeq \mathcal{D}/\Gamma,
\end{align}
where $\Gamma$ is given in (\ref{Gamma}).
Especially, 
the injection $ \mathcal{P}\left(\tilde{\mathfrak{A}}\right) \hookrightarrow \mathcal{D}/\Gamma$,
which is induced by the visualized period mapping $\Phi_1$ of (\ref{Phi1}),
is extended to (\ref{UTPeriod}).
\end{thm}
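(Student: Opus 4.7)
The strategy is to assemble the bijection from Theorem \ref{ThmDolgachev} together with the Weierstrass realization of pseudo-ample ${\bf M}$-polarized $K3$ surfaces established in Lemma \ref{LemMarking}. The starting point is Dolgachev's identification $\mathcal{M}_{\bf M}^{\rm pa}/\Gamma({\bf M}) \simeq \mathcal{D}/\Gamma$, which reduces the claim to exhibiting an explicit parametrization of isomorphism classes of pseudo-ample ${\bf M}$-polarized $K3$ surfaces by $\mathcal{U}\sqcup\mathcal{T}$. By Lemma \ref{LemMarking}, every such isomorphism class is realized by some $S_a$ of (\ref{SK3}) with $a\in\mathfrak{A}_0$, in such a way that the distinguished effective and nef divisor $\psi^{-1}\circ\psi_0(F)$ gives the class of a general fibre of the elliptic fibration $\pi_a$; this means the elliptic fibration structure is recovered from the ${\bf M}$-polarization alone, and in particular isomorphism as pseudo-ample ${\bf M}$-polarized $K3$ surfaces implies isomorphism as elliptic surfaces.

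Next I would split $\mathfrak{A}_0$ according to whether $a_0\neq 0$ or $a_0=0$. The key point—already highlighted in the text before the theorem—is that by Lemma \ref{LemSingular} the fibre $\pi_a^{-1}(\infty)$ is of Kodaira type $III^*$ (possibly promoted to $III^*$-containing degenerations) when $a_0\neq 0$, and is of type $II^*$ when $a_0=0$. Since Kodaira types are isomorphism invariants of elliptic surfaces, no $S_a$ with $a_0\neq 0$ is isomorphic as an elliptic surface to any $S_{a'}$ with $a_0'=0$; in view of the previous paragraph, this rules out any isomorphism between them as pseudo-ample ${\bf M}$-polarized $K3$ surfaces, which justifies the disjoint union.

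On the side $a_0\neq 0$, reduction to the canonical form (\ref{SK3Can}) packages the orbit under the $\mathbb{C}^*$-action $(x,y,z)\mapsto(x/a_0,y/a_0^2,z/a_0^3)$ into a single point of $\mathbb{P}(2,4,6,8,10,14)$, and Lemma \ref{LemA}, combined with (\ref{UHartogus}), confines this point to $\mathcal{U}$. Lemma \ref{LemEllipticSurface} then shows that distinct points of $\mathcal{U}$ give non-isomorphic elliptic surfaces, hence non-isomorphic ${\bf M}$-polarized $K3$ surfaces, yielding an injection $\mathcal{U}\hookrightarrow\mathcal{D}/\Gamma$ whose image is exactly the $\{a_0\neq 0\}$-part. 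On the side $a_0=0$, the same mechanism applies with the canonical form (\ref{SK3CanK}) and the scaling $(x,y,z)\mapsto(x/a_2,y/a_2^2,z/a_2^3)$, parametrizing the orbit by a point of $\mathbb{P}(4,6,10,12,18)$; the subset $\mathcal{T}$ is dictated by the locus $\mathcal{C}''$ of Lemma \ref{LemA}.

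The step I expect to require the most care is the analogue of Lemma \ref{LemEllipticSurface} for the canonical form (\ref{SK3CanK}): one must verify that if $S_1([t_1])$ and $S_1([t_2])$ are isomorphic as elliptic surfaces, then $[t_1]=[t_2]\in\mathbb{P}(4,6,10,12,18)$. This is a direct Weierstrass-coefficient comparison, mimicking the argument of \cite{Na} Lemma 1.1 used in Lemma \ref{LemEllipticSurface}, but with the roles of $a_0$ and $a_2$ interchanged so that the leading coefficient of $g_3^\vee$ now plays the normalizing role. Once this is in place, the disjoint union of the two parametrizations gives $\Phi:\mathcal{U}\sqcup\mathcal{T}\simeq\mathcal{D}/\Gamma$. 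The final assertion is essentially a tautology: by construction of $\Phi_1$ in (\ref{Phi1}) via $S$-markings on members of $\tilde{\mathfrak{A}}\subset\{a_0\neq 0\}$, the set $\mathcal{P}(\tilde{\mathfrak{A}})$ is contained in $\mathcal{U}$ and $\Phi|_{\mathcal{P}(\tilde{\mathfrak{A}})}$ agrees with the period assignment defining $\Phi_1$, so (\ref{UTPeriod}) extends $\Phi_1$ as claimed.
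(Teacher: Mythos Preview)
Your proposal is correct and follows essentially the same approach as the paper: the argument is given in the text preceding the theorem (there is no separate proof environment), combining Theorem~\ref{ThmDolgachev}, Lemma~\ref{LemMarking}, the $a_0\neq 0$/$a_0=0$ dichotomy via the Kodaira types at $\infty$ from Lemma~\ref{LemSingular}, and the two canonical forms (\ref{SK3Can}) and (\ref{SK3CanK}) with Lemma~\ref{LemEllipticSurface} and its analogue. Your explicit flag on the analogue of Lemma~\ref{LemEllipticSurface} for (\ref{SK3CanK}) is well placed; the paper handles this step implicitly by deferring to \cite{Na} (cf.\ the remark that (\ref{PhiTIso}) coincides with \cite{Na} Corollary~2.1).
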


Let $\mathcal{H}$ be the arithmetic arrangement of hyperplanes defined in Section 1.1.
By virtue of Lemma \ref{LemSingular},
$\mathcal{H} $ corresponds to pseudo-ample ${\bf M}$-polarized $K3$ surfaces given by the canonical form (\ref{SK3CanK}).
So, from Theorem \ref{ThmPeriodP},
the restriction of the period mapping $\Phi$ of (\ref{UTPeriod}) gives an isomorphism 
\begin{align}\label{PhiTIso}
\Phi |_\mathcal{T}:\mathcal{T} \simeq \left(\bigcup_{H\in\mathcal{H}} \mathcal{D}_H\right)/\Gamma.
\end{align}
We remark that (\ref{PhiTIso}) coincides with the period mapping of \cite{Na} Corollary 2.1.
The detailed results of (\ref{PhiTIso}) will be summarized in Section 4.

For $\mathcal{D}^\circ$ of (\ref{Dcirc}), we have an isomorphism
\begin{align}\label{PhiUIso}
\Phi |_\mathcal{U} : \mathcal{U} \simeq \mathcal{D}^\circ/\Gamma.
\end{align}
Since $\mathcal{P}\left(\tilde{\mathfrak{A}}\right)\subset \mathcal{U}$, 
(\ref{PhiUIso}) is an extension of $\mathcal{P}\left(\tilde{\mathfrak{A}}\right)\hookrightarrow \mathcal{D}/\Gamma$,
which is derived from $\Phi_1$ of (\ref{Phi1}).
By abuse of notation,
this $\Phi|_\mathcal{U}$ will be denoted by $\Phi$
in Section 5.

\section{Sequence of families of $K3$ surfaces and complex reflection groups}

Let  $\mathfrak{A}_1,\mathfrak{A}_2,\mathfrak{A}_3$ be subvarieties of $\mathfrak{A}_0$ explicitly given by
\begin{align}\label{ParameterSubA}
\begin{cases}
&\mathfrak{A}_1=\{a\in \mathfrak{A}_0  \mid a_0=0\},\\
& \mathfrak{A}_2=\{a\in \mathfrak{A}_0   \mid a_0=a_{14}=0\},\\
&\mathfrak{A}_3=\{a\in \mathfrak{A}_0   \mid a_0=a_{14}=\mathfrak{M}(a)=0\}.
\end{cases}
\end{align} 
Here,
$$
\mathfrak{M}(a):=\left(a_2 a_{10}  + \frac{a_4^3}{27} - \frac{a_6^2}{4}\right)^2 + \frac{1}{27} a_4 (a_4 a_6 + 6 a_2 a_8)^2=0
$$
coincides with the modular equation of the Humbert surface for the minimal discriminant which is studied in \cite{NSA} Theorem 5.4 via an appropriate transformation.

We have the subfamilies
$$
\varpi_j: \mathfrak{F}_j \rightarrow \mathfrak{A}_j \quad \quad (j\in \{1,2,3\})
$$
of $\varpi_0: \mathfrak{F}_0=\{S_a \mid a\in \mathfrak{A}_0\} \rightarrow \mathfrak{A}_0$.
They are indicated in the diagram (\ref{diagram1}).
Also, we need another subvariety
$
\mathfrak{A}_1'=\{a\in \mathfrak{A}_0  \mid a_{14}=0\}
$
and another subfamily $\varpi'_1:\mathfrak{F}_1' \rightarrow \mathfrak{A}_1'$  such that $\mathfrak{A}_1\cap \mathfrak{A}_1'=\mathfrak{A}_2$.

\subsection{Transcendental lattices for subfamilies of $\mathfrak{F}_0$}

The above mentioned subfamilies of $\mathfrak{F}_0$
are precisely studied in \cite{CD}, \cite{NHilb}, \cite{CMS} and \cite{Na}.
For each case,
it is important to determine the transcendental lattice in order to study the moduli space of the corresponding lattice polarized $K3$ surfaces.
By surveying the results of those papers, we have the following result.

\begin{prop}\label{PropLatticeF}
The intersection matrices of the transcendental lattices of a generic member of the subfamilies $\mathfrak{F}_1,\mathfrak{F}_1',\mathfrak{F}_2,\mathfrak{F}_3$ of $\mathfrak{F}_0$ are given in Table 2.
\begin{table}[h]
\center
\vspace{-3mm}
\caption{Subfamilies of $\mathfrak{F}_0$}
\vspace{1mm}
\begin{tabular}{cccccc}
\toprule
Parameters  &  Transcendental lattices   & $K3$ surfaces \\
 \midrule
$\mathfrak{A}_1$ & ${\bf A}_1=U\oplus U\oplus A_2(-1)$  & \cite{Na} \\
$\mathfrak{A}_1'$ & ${\bf A}_1'=U\oplus U\oplus A_1(-1)^{\oplus 2}$  &  \cite{CMS} \\
$\mathfrak{A}_2$  &  ${\bf A}_2=U\oplus U \oplus A_1(-1)$  & \cite{CD}  \\
$\mathfrak{A}_3$  &  ${\bf A}_3=U\oplus \begin{pmatrix}  2 & 1 \\ 1 & -2\end{pmatrix}$  & \cite{NHilb}\\
 \bottomrule
\end{tabular}
\end{table} 
\end{prop}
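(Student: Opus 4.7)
The plan is a case-by-case analysis of the four subfamilies, invoking Theorem \ref{TheoremLattice} as the generic baseline and then tracking how each specialization alters either the singular-fibre configuration (and hence the N\'eron--Severi lattice) or forces a new algebraic cycle. Since the transcendental lattice of a generic member is the orthogonal complement of its N\'eron--Severi lattice in the unimodular lattice $L_{K3}$, it suffices to identify the enlarged N\'eron--Severi lattice in each case and then invoke the primitive-embedding uniqueness for the resulting even lattice. For the generic member of $\mathfrak{F}_0$, the N\'eron--Severi lattice is ${\bf M} = U \oplus E_7(-1) \oplus E_6(-1)$, embedded primitively (as $|{\rm det}({\bf M})| = 6$ is square-free), with complement ${\bf A} = {\bf A}_0$.

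For $\mathfrak{A}_1 = \{a_0 = 0\}$, Lemma \ref{LemSingular} promotes the $III^*$ fibre at $\infty$ to a $II^*$ fibre, so the $E_7$ root configuration grows to $E_8$; the generic N\'eron--Severi lattice of $\mathfrak{F}_1$ becomes $U \oplus E_8(-1) \oplus E_6(-1)$, with orthogonal complement $U \oplus U \oplus A_2(-1) = {\bf A}_1$, in agreement with \cite{Na}. For $\mathfrak{A}_1' = \{a_{14} = 0\}$, the $IV^*$ fibre at $0$ is promoted to $III^*$, enhancing $E_6$ to $E_7$; the N\'eron--Severi lattice is $U \oplus E_7(-1)^{\oplus 2}$ and its complement is $U \oplus U \oplus A_1(-1)^{\oplus 2} = {\bf A}_1'$, recovering \cite{CMS}. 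For $\mathfrak{A}_2 = \{a_0 = a_{14} = 0\}$, both enhancements occur simultaneously, yielding N\'eron--Severi lattice $U \oplus E_8(-1) \oplus E_7(-1)$ of rank $17$ and transcendental complement $U \oplus U \oplus A_1(-1) = {\bf A}_2$, as in \cite{CD}. In each case, the primitive embedding into $L_{K3}$ is unique up to $O(L_{K3})$ by Nikulin's criteria, so the orthogonal complement is well-defined.

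The most delicate case is $\mathfrak{A}_3 = \{a_0 = a_{14} = \mathfrak{M}(a) = 0\}$. Here the two fibre enhancements already force a rank-$17$ N\'eron--Severi lattice as in $\mathfrak{A}_2$, but the additional vanishing of the Humbert-type polynomial $\mathfrak{M}(a)$ contributes a further $(-2)$-class not visible in any Kodaira fibre, raising the Picard number to $18$. Verifying that this class is genuinely new and yields a primitive rank-$18$ sublattice whose orthogonal complement is $U \oplus \begin{pmatrix} 2 & 1 \\ 1 & -2 \end{pmatrix} = {\bf A}_3$ is the main technical obstacle; it was carried out in \cite{NHilb} via period-theoretic arguments tied to Hilbert modular forms for a real quadratic field, using the identification of $\mathfrak{M}(a) = 0$ with the minimal-discriminant Humbert surface established in \cite{NSA}. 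Granting these results, the four entries of Table 2 follow.
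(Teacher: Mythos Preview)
Your sketch is correct and, in fact, supplies more than the paper does: in Section~4.1 the paper states Proposition~\ref{PropLatticeF} purely as a survey, with no argument beyond the citations in Table~2. The paper does remark later (opening of Section~6) that the lattices for $\mathfrak{F}_1,\mathfrak{F}_2,\mathfrak{F}_3$ ``can be determined in a similar way'' to Theorem~\ref{TheoremLattice}, i.e.\ via singular-fibre analysis, which is exactly the route you take; so your outline is consistent with the intended methodology, just spelled out rather than delegated to the references.

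One point worth tightening: for each subfamily you identify a sublattice of ${\rm NS}$ from the enhanced fibre configuration, but you do not argue that this sublattice is \emph{all} of ${\rm NS}$ for a generic member (equivalently, that the Picard number is exactly $16$, $16$, $17$, $18$ rather than larger). In the paper's own treatment of $\mathfrak{F}_0$ this step is handled by the local-period-mapping/Torelli argument behind Theorem~\ref{TheoremLattice}; the analogous step for the subfamilies is what the cited papers \cite{Na}, \cite{CMS}, \cite{CD}, \cite{NHilb} actually carry out. Your proposal implicitly leans on those citations for this point, which is fine, but it would be cleaner to say so explicitly rather than to present the fibre computation as if it alone determined ${\rm NS}$.
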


The lattices in (\ref{LatticesAj}) are based on
 Proposition \ref{PropLatticeF} and Theorem \ref{TheoremLattice}.

\subsection{Transcendental lattices for subfamilies of $\mathfrak{G}_0$ of Kummer-like surfaces}

In Section 2.3,
we have the family $\overline{\varpi_0}: \mathfrak{G}_0=\{K_a \mid a\in \mathfrak{A}_0\}\rightarrow \mathfrak{A}_0$ of $K3$ surfaces.
This family contains interesting and important families of algebraic $K3$ surfaces.
For example,
the subfamily $\mathfrak{G}_2$ over $\mathfrak{A}_2$ coincides with the well-known family of Kummer surfaces derived from principally polarized Abelian surfaces.
Also, the subfamily $\mathfrak{G}_1'$ over $\mathfrak{A}_1'$ is the family of $K3 $ surfaces
given by the double covering of $\mathbb{P}^2(\mathbb{C})$ branched along six lines, studied by Matsumoto-Sasaki-Yoshida \cite{MSY}.

\begin{prop}\label{PropLatticeG}
The intersection matrices of the transcendental lattices of a generic member of the subfamilies $\mathfrak{G}_1,\mathfrak{G}_1',\mathfrak{G}_2,\mathfrak{G}_3$ of $\mathfrak{G}_0$ are given in Table 3.
\begin{table}[h]
\center
\vspace{-3mm}
\caption{Subfamilies of $\mathfrak{G}_0$ }
\vspace{1mm}
\begin{tabular}{cccccc}
\toprule
Parameters  &  Transcendental lattices   & $K3$ surfaces \\
 \midrule
$\mathfrak{A}_1$ & ${\bf B}_1=U(2)\oplus U(2)\oplus A_2(-2)$  & \cite{NS2} \\
$\mathfrak{A}_1'$ & ${\bf B}_1'=U(2)\oplus U(2)\oplus A_1(-1)^{\oplus 2}$  &  \cite{MSY} \\
$\mathfrak{A}_2$  &  ${\bf B}_2=U(2)\oplus U(2) \oplus A_1(-2)$  &  Kummer surface  \\
$\mathfrak{A}_3$  &  ${\bf B}_3=U(2)\oplus \begin{pmatrix}  4 & 2 \\ 2 & -4\end{pmatrix}$  & \cite{NaKum}\\
 \bottomrule
\end{tabular}
\end{table} 
\end{prop}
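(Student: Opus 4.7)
The plan is to reduce Proposition~\ref{PropLatticeG} to four independent facts already established in the cited literature, by identifying each subfamily $\mathfrak{G}_j$ (and $\mathfrak{G}_1'$) with the family of $K3$ surfaces treated in the corresponding paper in Table 3. For each of $j \in \{1,1',2,3\}$ I would specialize the defining equation (\ref{KL}) of $K_a$ to the parameter locus $\mathfrak{A}_j$ (using (\ref{ParameterSubA}), and the analogous definition for $\mathfrak{A}_1'$ from the paragraph following (\ref{ParameterSubA})), and exhibit an explicit birational equivalence between the specialized surface $K_a$ and the surface form appearing in the cited reference.

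For $\mathfrak{A}_2$ the identification is classical: setting $a_0 = a_{14} = 0$ in (\ref{KL}) yields a Kummer surface of a principally polarized Abelian surface, whose generic transcendental lattice $U(2)^{\oplus 2} \oplus A_1(-2)$ is standard (Nikulin). For $\mathfrak{A}_1'$, the Matsumoto--Sasaki--Yoshida family \cite{MSY} is presented as the double cover of $\mathbb{P}^2$ branched along six general lines; the required identification amounts to bringing (\ref{KL}) with $a_{14} = 0$ into this branched-cover form via an explicit change of coordinates, after which the lattice $U(2)^{\oplus 2} \oplus A_1(-1)^{\oplus 2}$ is read off from \cite{MSY}. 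For $\mathfrak{A}_1$ and $\mathfrak{A}_3$ the identifications are more direct, and the lattice computations for $U(2)^{\oplus 2} \oplus A_2(-2)$ and $U(2) \oplus \left(\begin{smallmatrix} 4 & 2 \\ 2 & -4 \end{smallmatrix}\right)$ are carried out in \cite{NS2} and \cite{NaKum} respectively, so one simply invokes those results.

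A unifying feature across the four cases is the appearance of the rescaling $\Lambda \mapsto \Lambda(2)$: the Nikulin involution $\iota_{K_a}$ on $K_a$ together with the double-cover correspondence $\varphi_0$ of (\ref{TriDiagram}) relates the generic transcendental lattices of $K_a$ and $S_a$ (given by Proposition~\ref{PropLatticeF}) by a scaling by $2$, which explains the passage from ${\bf A}_j$ to ${\bf A}_j(2)$ for $j\in\{1,2,3\}$. The only case where the pattern breaks is $j=1'$, where ${\bf B}_1' = U(2)^{\oplus 2} \oplus A_1(-1)^{\oplus 2}$ is not $U(2)^{\oplus 2} \oplus A_1(-2)^{\oplus 2} = {\bf A}_1'(2)$; this reflects the additional algebraic cycles gained on $K_a$ from the six-line branch locus, and must be checked by a direct Picard-lattice argument as in \cite{MSY}.

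The main obstacle I anticipate is precisely the $\mathfrak{A}_1'$ case: the Matsumoto--Sasaki--Yoshida model is far in appearance from the Weierstrass form (\ref{KL}), so establishing the birational identification requires constructing an explicit change of variables and verifying that one of the elliptic pencils on the sextic double cover matches the fibration inherited from (\ref{KL}) with $a_{14}=0$, together with checking the non-standard scaling on the $A_1$-summand. The remaining three cases should reduce to bookkeeping once each specialized Weierstrass equation is matched against the normal form of its reference.
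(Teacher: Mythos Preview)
Your plan matches the paper's treatment: Proposition~\ref{PropLatticeG} is not given an independent proof there but is presented as a summary of the cited works, so your strategy of identifying each specialized $K_a$ with the family treated in \cite{NS2}, \cite{MSY}, the classical Kummer case, and \cite{NaKum}, and then invoking those results, is exactly what the paper does.

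One caveat on your ``unifying feature'' paragraph: use it only as a heuristic or as an \emph{a posteriori} check, not as a proof device. The double cover runs $K_a \to S_a$, so Nikulin's technique (\cite{Ni}, Section~2) computes ${\rm Tr}(S_a)$ from ${\rm Tr}(K_a)$, not the other way around. The paper stresses this explicitly at the start of Section~6 (``If there were a double covering $S_a \rightarrow K_a$ \ldots\ However, in practice, we can prove that there is no such a double covering''), and uses the relation only at the very end as a consistency check on Theorem~\ref{ThmTrK}. So the identity ${\bf B}_j = {\bf A}_j(2)$ for $j\in\{1,2,3\}$ is an outcome of the independent computations in the cited references, not a shortcut around them; the failure of this pattern for $j=1'$ (and for $j=0$, cf.\ (\ref{LatticesBj})) confirms that the Nikulin correspondence cannot simply be inverted here. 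Your identification of $\mathfrak{A}_1'$ as the delicate case is apt; the paper likewise singles out the twisted-homology calculation of \cite{MSY} as one of the nontrivial ingredients.
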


By the way, 
we are able to  determine  the transcendental lattices  in Table 2
for the subfamilies $\mathfrak{F}_0$ by a relatively simple way.
However,
the proof of Proposition \ref{PropLatticeG}
is much more complicated.
We need a delicate argument for each case.
For detail, see the beginning of Section 6.

Proposition \ref{PropLatticeG} is necessary for the proof of 
Theorem \ref{ThmTrK}, which is the main theorem of Section 6.
The lattices in (\ref{LatticesBj}) are based on Proposition \ref{PropLatticeG} and Theorem \ref{ThmTrK}.

\subsection{Relation between modular forms and invariants of complex reflection groups via theta functions}

For the subfamilies $\mathfrak{F}_1,\mathfrak{F}_2$ and $\mathfrak{F}_3$,
there is a non-trivial relationship between the period mappings for them 
and a complex reflection group of rank $r_j=6-j$.

In each family of $K3$ surfaces in Table 1,
we have the period mapping
\begin{align}\label{Phij}
\Phi_j: \mathcal{P}_j \simeq \mathcal{D}_j/\Gamma_j
\end{align}
where $\mathcal{P}_j$ is a Zariski open set in the weighted projective space
whose weights are given in Table 1, 
$\mathcal{D}_j$ is a $(5-j)$-dimensional symmetric domain
and $\Gamma_j$ is a subgroup of the orthogonal group of the lattice ${\bf A}_j$ in Table 2.
We note that $\Phi_j$ of  (\ref{Phij}) can be obtained as a restriction of (\ref{UTPeriod}).

\begin{table}[h]
\center
\vspace{-3mm}
\caption{$K3$ surfaces, reflection groups and theta functions}
\vspace{1mm}
\begin{tabular}{ccccccc}
\toprule
  $j$ &      Reflection groups  &  Theta functions  &Theta expressions via $K3$ surfaces  \\
 \midrule
$1$   &   No.33 &\cite{DK} &\cite{NS1}  \\
$2$  &     No.31  &\cite{R} & \cite{CD} and \cite{R} \\
$3$ &  No.23  & \cite{Muller} & \cite{NHilb}  \\
 \bottomrule
\end{tabular}
\end{table}

Finite complex reflection groups are listed by Shephard-Todd \cite{ST} (see also \cite{LT}).
Note that  real reflection groups are contained in this list.

A complex reflection group of rank $r_j$ acts on the polynomial ring $\mathbb{C}[X_1,\ldots,X_{r_j}]$.
We can find generators of the ring of invariants for this action.
Let $\left( w_1^{(j)},\ldots,w_{r_j}^{(j)} \right)$  $(j\in\{1,2,3\})$ be the set of weights given in Table 1.
There is a system $\left\{g_{\kappa_j w_1}^{(j)}(X_1,\ldots,X_{r_j}),\ldots, g_{\kappa_j w_{r_j}}^{(j)}(X_1,\ldots,X_{r_j}) \right\}$ of generators of the ring of invariants.
Here, $\kappa_j \in \mathbb{Z}$ is the integer in Table 1
and $g^{(j)}_{\kappa_j w_l}$ is a polynomial of degree $\kappa_j w_l^{(j)}$.
For example,
if $j=3$,
the invariants for the group No.23 are famous  Klein's icosahedral invariants introduced  in \cite{K}.
In the preceding papers in Table 4, we have the  explicit theta expressions of the inverse of $\Phi_j$ of (\ref{Phij}) via appropriate systems of  theta functions.

For $j=1,2$,
 we have simple expressions of the above mentioned results.
There exists a system $\{\vartheta_1^{(j)} (Z_j), \ldots,\vartheta_{r_j}^{(j)} (Z_j)\}$ 
of theta functions  $\mathcal{D}_j \ni Z_j \mapsto \vartheta_\ell^{(k)} (Z_j)\in \mathbb{C}$ of weight $1/\kappa_j$
such that
\begin{align}\label{ModularTheta}
\mathcal{D}_j \ni Z_j \mapsto \left(g_{\kappa_j w_1}^{(j)}\left(\vartheta_1^{(j)} (Z_j), \ldots,\vartheta_{r_j}^{(j)} (Z_j)\right): \ldots : g_{\kappa_j w_{r_j}}^{(j)}\left(\vartheta_1^{(j)} (Z_j), \ldots,\vartheta_{r_j}^{(j)} (Z_j)\right)\right)\in \mathcal{P}_j
\end{align}
gives a ratio  of modular forms on $\mathcal{D}_j$ with respect to $\Gamma_j$
and
 it coincides with the inverse of the period mapping $\Phi_j $ of (\ref{Phij}).

\begin{itemize}

\item If $j=1$,
the invariants for the group No.33 are given in \cite{B}
and the explicit form (\ref{ModularTheta}) is established in \cite{NS1} Theorem 5.1,
using the theta functions of \cite{DK}. 
In this case,  (\ref{ModularTheta}) is given by a ratio of Hermitian modular forms for the unitary group $U(2,2)$
concerned with the imaginary quadratic field of the simplest discriminant.

\item If $j=2$,
using the invariants for the group No.31 and the theta functions given in \cite{R} Section 4,
one can obtain the expression (\ref{ModularTheta}) by combining the results  \cite{CD} Theorem 3.5 and \cite{R} Section 4 (see also, \cite{NS1} Section 5.1).
In this case,  (\ref{ModularTheta}) is given by a ratio of well-known Siegel modular forms of degree $2$.

\end{itemize}
Also, refer to Remark \ref{RemKappa}.

\section{Meromorphic modular forms}

Let $\mathcal{D}^*$
be the connected component of 
$
\left\{\xi\in {\bf A}\otimes \mathbb{C} \mid (\xi\cdot\xi)=0, (\xi\cdot\overline{\xi})>0 \right\}
$
which projects to $\mathcal{D}$.
For $\mathcal{D}^\circ$ of (\ref{Dcirc}),
let $(\mathcal{D}^\circ)^*$  be a subset of $\mathcal{D}^*$ which projects to $\mathcal{D}^\circ$.

Based on the fact stated in Theorem \ref{ThmLooijenga} below,
we will use the following terminology.

\begin{df}\label{DfMeroModularForm}
A holomorphic function $f: (\mathcal{D}^\circ)^* \rightarrow \mathbb{C}$ given by $Z \mapsto f(Z)$
is called a meromorphic modular form  of weight $k\in \mathbb{Z}$ and character $\chi \in {\rm Char}(\Gamma)$ with poles in $\mathcal{H}$,
if $f$ satisfies
\begin{itemize}

\item[(i)] $f(\lambda Z) = \lambda^{-k} f(Z) \quad (\text{for all } \lambda \in \mathbb{C}^*),$

\item[(ii)] $f(\gamma Z) = \chi (\gamma) f(Z) \quad (\text{for all } \gamma \in \Gamma).$

\end{itemize}

\end{df}

The vector space of the meromorphic modular forms of weight $k\in \mathbb{Z}$ and $\chi \in {\rm Char}(\Gamma)$ with poles in $\mathcal{H}$ is denoted by $\mathcal{A}^\circ _k (\Gamma,\chi)$.
Then, the ring of the meromorphic modular forms is given by
$$
\mathcal{A}^\circ (\Gamma)=\bigoplus_{k\in \mathbb{Z}} \bigoplus_{\chi \in {\rm Char}(\Gamma)} 
\mathcal{A}^\circ _k (\Gamma,\chi).
$$
In this section, we will construct the generator of this ring.
Recalling Proposition \ref{PropKneser},
we will consider the cases of $\chi={\rm id}$ and $\chi={\rm det}$.
The structure of the ring $\mathcal{A}^\circ (\Gamma)$ is determined by Theorem \ref{ThmId} and \ref{Thmdet}.

By the way,
in \cite{HU} (see also \cite{Na}),
period mappings for lattice polarized $K3$ surfaces 
and the canonical orbibundles on the Satake-Baily-Borel compactifications of  symmetric spaces
are effective to construct holomorphic modular forms.
However, 
the Satake-Baily-Borel compactifications are useless for our purpose,
because we want to obtain not holomorphic modular forms but meromorphic modular forms.
Accordingly,
we will use the Looijenga compactification for $\mathcal{D}^\circ$ of (\ref{Dcirc}) and $\Gamma$ of (\ref{Gamma}),
instead of the Satake-Baily-Borel compactification.

Since  Proposition \ref{PropCodim} and (\ref{UHartogus}) hold, 
by Hartogs's extension theorem, 
the period mapping $\Phi$ of (\ref{PhiUIso}) is extended to the isomorphism
\begin{align}\label{HatPhi}
\widehat{\Phi}: \widehat{\mathcal{U}}\simeq \widehat{\mathcal{D}^\circ/\Gamma}^{\bf L}
\end{align}
between the weighted projective space
$
\widehat{\mathcal{U}}=\mathbb{P}(2,4,6,8,10,14)
$
 and the Looijenga compactification.
Our construction of meromorphic modular forms is based on this period mapping.

\subsection{Meromorphic modular forms of character ${\rm id}$}

There exists a unique holomorphic $2$-form $\omega_u$ on $S(u)$ of (\ref{SK3Can}) up to a constant factor.
This is explicitly given by $\frac{dx_0 \wedge dy_0 }{ z_0}, $ where $x_0=\frac{x}{w^4}, y_0=\frac{y}{w^{10}}, z_0=\frac{z}{w^{15}}.$
The action of $\lambda\in \mathbb{C}^*$ given by $S(u) \rightarrow S(\lambda \cdot u)$, 
which defines the family (\ref{SUFamily}), 
induces the relation
\begin{align}\label{omegaaction}
\lambda^* \omega_{\lambda \cdot u}= \lambda^{-1} \omega_{u}.
\end{align}

\begin{thm}\label{ThmId}
The ring $\mathcal{A}^\circ (\Gamma,{\rm id})$  of meromorphic modular forms of character ${\rm id}$ is isomorphic to the polynomial ring
$ \mathbb{C}[u_2,u_4,u_6,u_8,u_{10},u_{14}].
$ 
Here, a polynomial of weight $k$ defines a modular form of weight $k$. 
\end{thm}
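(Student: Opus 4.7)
My plan is to transport the identification of modular forms with sections of the automorphic line bundle on the Looijenga compactification along the isomorphism $\widehat{\Phi}$ of (\ref{HatPhi}) to the weighted projective space $\widehat{\mathcal{U}}=\mathbb{P}(2,4,6,8,10,14)$, and then read off the ring of sections of $\mathcal{O}_{\widehat{\mathcal{U}}}(k)$.

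First I would invoke Theorem \ref{ThmLooijenga}, whose hypothesis is verified for our arrangement $\mathcal{H}$ by Lemma \ref{LemArrangement}. Together with the remark at the end of Section~1 that every $\Gamma$-invariant meromorphic automorphic form with poles in $\mathcal{H}$ corresponds to a meromorphic section of $\widehat{\mathscr{L}}$ whose restriction to $X^\circ=\mathcal{D}^\circ/\Gamma$ is holomorphic, and with Proposition \ref{PropCodim} ensuring that the boundary $\widehat{\mathcal{D}^\circ/\Gamma}^{\bf L}-\mathcal{D}^\circ/\Gamma$ has codimension at least $2$, Hartogs' extension theorem (applied to the locally free sheaf $\widehat{\mathscr{L}}^k$) yields a graded isomorphism
\begin{equation*}
\mathcal{A}^\circ(\Gamma,{\rm id}) \;\simeq\; \bigoplus_{k\geq 0} H^0\!\left(\widehat{\mathcal{D}^\circ/\Gamma}^{\bf L},\,\widehat{\mathscr{L}}^k\right).
\end{equation*}
The character ${\rm id}$ condition means no twist, so ordinary $\Gamma$-invariant sections suffice.

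Next I would pull this back along $\widehat{\Phi}$. The target $\widehat{\mathcal{U}}$ is by definition the $\mathrm{Proj}$ of the graded polynomial ring $\mathbb{C}[u_2,u_4,u_6,u_8,u_{10},u_{14}]$ (with $u_k$ of weight $k$), and the tautological line bundle $\mathcal{O}_{\widehat{\mathcal{U}}}(1)$ satisfies $\bigoplus_{k\geq 0}H^0(\widehat{\mathcal{U}},\mathcal{O}_{\widehat{\mathcal{U}}}(k))=\mathbb{C}[u_2,\ldots,u_{14}]$. The key remaining step is to identify $\widehat{\Phi}^{*}\widehat{\mathscr{L}}$ with $\mathcal{O}_{\widehat{\mathcal{U}}}(1)$ as $\mathbb{C}^*$-equivariant line bundles. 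This I would do using (\ref{omegaaction}): since $\lambda^{*}\omega_{\lambda\cdot u}=\lambda^{-1}\omega_u$, each period $\int_{\psi_{[u]}^{-1}(\gamma_i)}\omega_u$ is homogeneous of degree $-1$ under $u\mapsto \lambda\cdot u$. This is exactly the transformation law of a section of $\mathscr{L}$ in Definition \ref{DfMeroModularForm}(i) with $k=1$, so the tautological fibers on both sides match and hence $\widehat{\Phi}^{*}\widehat{\mathscr{L}}\simeq \mathcal{O}_{\widehat{\mathcal{U}}}(1)$. Combining the two isomorphisms gives $\mathcal{A}^\circ(\Gamma,{\rm id})\simeq \mathbb{C}[u_2,u_4,u_6,u_8,u_{10},u_{14}]$ with a polynomial of weight $k$ corresponding to a modular form of weight $k$.

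The main obstacle will be the equivariant line-bundle identification $\widehat{\Phi}^{*}\widehat{\mathscr{L}}\simeq \mathcal{O}_{\widehat{\mathcal{U}}}(1)$: one must argue that the weight relation (\ref{omegaaction}), verified locally on $\mathcal{P}(\tilde{\mathfrak{A}})$ through the explicit period map $\Phi_1$, extends uniquely to a global line-bundle isomorphism across both the strata of $\widehat{\mathcal{U}}\setminus \mathcal{P}(\tilde{\mathfrak{A}})$ and the boundary of the Looijenga compactification. This uses ampleness of $\widehat{\mathscr{L}}$ from Theorem \ref{ThmLooijenga} together with the codimension bound of Proposition \ref{PropCodim} to extend a priori birational isomorphisms of line bundles to genuine ones, so that one then concludes the desired ring isomorphism in all weights at once.
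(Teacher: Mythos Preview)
Your proposal is correct and follows essentially the same approach as the paper: identify meromorphic modular forms with sections of the automorphic line bundle on the quotient, extend across a codimension-$\geq 2$ locus by Hartogs, and match the $\mathbb{C}^*$-weights via (\ref{omegaaction}). The only cosmetic difference is that you route the Hartogs step through the Looijenga compactification and Proposition~\ref{PropCodim}, whereas the paper works directly on the weighted-projective-space side using (\ref{UHartogus}) and the isomorphism $\iota_\mathcal{U}^*:\mathrm{Pic}(\widehat{\mathcal{U}})\simeq\mathrm{Pic}(\mathcal{U})$; since $\widehat{\Phi}$ identifies the two compactifications, these are the same argument seen from opposite ends.
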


\begin{proof}
We have a principal $\mathbb{C}^*$-bundle
${\rm pr} :(\mathcal{D}^\circ)^* \rightarrow \mathcal{D}^\circ$.
The quotient space $Q=\mathcal{D}^\circ/\Gamma$ is identified with a Zariski open set $\mathcal{U}$ of the weighted projective space $\widehat{\mathcal{U}}=\mathbb{P}(2,4,6,8,10,14)$
via the period mapping.
Since ${\rm pr}$ is equivalent under the action of $\Gamma=\tilde{O}^+({\bf A}),$
we have a principal $\mathbb{C}^*$-bundle
$\overline{{\rm pr}}: (\mathcal{D}^\circ)^* /\Gamma \rightarrow Q.$
Let $\mathcal{O}_Q (1) $ be the line bundle over $Q$ associated with $\overline{{\rm pr}}$ and set $\mathcal{O}_Q(k)=\mathcal{O}_Q(1)^{\otimes k}.$
Recalling the definition of the associated bundle, we can regard a section of $\mathcal{O}_Q(k) $ as a holomorphic function $(\mathcal{D}^\circ)^*\ni Z \mapsto s(Z) \in \mathbb{C}$ 
satisfying
\begin{align}\label{SectionAssBundle}
s(\lambda Z)=\lambda^{-k} s(Z), \quad s(\gamma Z)=s(Z),
\end{align}
where $\lambda\in \mathbb{C}^*$ and $\gamma\in \Gamma.$
From (\ref{UHartogus}), $\widehat{\mathcal{U}}-\mathcal{U}$ is an analytic subset such that
$
{\rm codim} \left(\widehat{\mathcal{U}}-\mathcal{U} \right)\geq 2.
$
So, via Hartogs's phenomenon,
the inclusion $\iota_\mathcal{U}:\mathcal{U} \hookrightarrow \widehat{\mathcal{U}}$ induces the isomorphism
\begin{align}\label{PicIsoU}
\iota_\mathcal{U}^*: {\rm Pic}\left(\widehat{\mathcal{U}}\right) \simeq {\rm Pic}(\mathcal{U}).
\end{align}
Now, we have ${\rm Pic}\left(\widehat{\mathcal{U}}\right) \simeq \mathbb{Z}$ and
\begin{align}\label{CuOplus}
\bigoplus_{k\in \mathbb{Z}} 
H^0\left(\widehat{\mathcal{U}}, 
\mathcal{O}_{\widehat{\mathcal{U}}} (k)\right) 
= \mathbb{C}[u_2,u_4,u_6,u_8,u_{10},u_{14}],
\end{align} 
because $\widehat{\mathcal{U}}={\rm Proj}(\mathbb{C}[u_2,u_4,u_6,u_8,u_{10},u_{14}])$ is a weighted projective space.
From (\ref{PicIsoU}) and (\ref{CuOplus}), we have
\begin{align}\label{QOplus}
\bigoplus_{\mathcal{L}\in {\rm Pic}(Q)} H^0(Q,\mathcal{O}_Q(\mathcal{L})) 
 \simeq 
  \mathbb{C}[u_2,u_4,u_6,u_8,u_{10},u_{14}].
\end{align}
Due to (\ref{omegaaction}), the period mapping gives the following diagram:
\begin{align}\label{DiagramWeight}
  \xymatrix@C=75pt{
   u=(u_2,u_4,u_6,u_8,u_{10},u_{14})  \ar[r]^{\hspace{2cm}\text{period mapping}} \ar[d]_{\mathbb{C}^*\text{-action}} & Z \ar[d]^{\mathbb{C}^*\text{-action}}  \\
    \lambda^{-1}\cdot u=(\lambda^{-2} u_2,\lambda^{-4}u_4,\lambda^{-6}u_6,\lambda^{-8}u_8,\lambda^{-10}u_{10},\lambda^{-14}u_{14}) \ar[r]^{\hspace{3.9cm}\text{period mapping}}   & \lambda Z \\
}
\end{align}

From Definition \ref{DfMeroModularForm},
together with (\ref{SectionAssBundle}) and (\ref{QOplus}),
we have the assertion.
\end{proof}

The integer $\kappa_0=3$ in Table 1 is coming from this theorem.
Namely,
$(2\kappa_0, 4\kappa_0,6\kappa_0,8\kappa_0,10\kappa_0,14\kappa_0)=(6,12,18,24,30,42)$
is equal to the degrees of the group No.34 (see \cite{LT} Appendix D).

\subsection{Meromorphic modular forms of character ${\rm det}$ }

We will study the orbifold
$$
\mathbb{O}=[(\mathcal{D}^\circ)^*/(\Gamma \times \mathbb{C})]
$$
in order to construct modular forms for ${\rm det}$.

First, let us observe the action of $\Gamma $ on $\mathcal{D}^\circ$ precisely.
The action of $\Gamma$ on $\mathcal{D}^\circ$ is effective.
We set
$$
\mathfrak{H}_{\mathcal{D}^\circ}=\bigcup_{g \in \Gamma} \{[Z]\in\mathcal{D}^\circ  \mid g([Z])=[Z]\}.
$$
Also, letting $\Gamma_{[Z]}$ be the stabilize subgroup with respect to $[Z]\in \mathcal{D}^\circ$,
we set
$$
\mathfrak{S}_{\mathcal{D}^\circ}=\{[Z]\in \mathcal{D}^\circ \mid \Gamma_{[Z]} \text{ is neither } \{{\rm id}_{\Gamma}\} \text{ nor } \{{\rm id}_\Gamma,\sigma_\delta\} \text{ for } \delta\in \Delta({\bf A})  \}.
$$
According to Proposition \ref{PropKneser},
$\mathfrak{H}_{\mathcal{D}^\circ}$ is a countable union of reflection hypersurfaces
and $\mathfrak{S}_{\mathcal{D}^\circ}$ is a countable union of analytic subsets of codimension at least $2$.

Let $\mathfrak{H}_{Q} $ and $\mathfrak{S}_{Q}$ be the images of 
 $\mathfrak{H}_{\mathcal{D}^\circ} $ and $\mathfrak{S}_{\mathcal{D}^\circ}$ 
by the projection  $\mathcal{D}^\circ \rightarrow Q=\mathcal{D}^\circ/\Gamma$, respectively. 
From Proposition \ref{PropKneser},
it follows that
\begin{align}\label{Gamma'}
\Gamma':=\{ \gamma \in \Gamma  \mid \gamma \text{ is given by a product of reflections of even numbers }\}
=\{\gamma \in \Gamma  \mid {\rm det}(\gamma)=1\}.
\end{align} 
We set $Q_1=\mathcal{D}^\circ/\Gamma'$.
The action of $\Gamma'$ on $\mathcal{D}^\circ-\mathfrak{S}_{\mathcal{D}^\circ}$ is free.
Let us naturally define $\mathfrak{H}_{Q_1}$ and $\mathfrak{S}_{Q_1}$, respectively.
Recall that the period mapping $\Phi$ gives an identification $\mathcal{U}\simeq Q$ (see (\ref{PhiUIso})).
Set $\mathfrak{H}_{\mathcal{U}}=\Phi^{-1} (\mathfrak{H}_Q)$ and $\mathfrak{S}_{\mathcal{U}}=\Phi^{-1} (\mathfrak{S}_Q)$.
Then, 
$\mathfrak{H}_\mathcal{U}$ gives a divisor on the weighted projective space $\widehat{\mathcal{U}}$
and there exists a weighted homogeneous polynomial $\Delta_\mathcal{U}(u)\in \mathbb{C}[u_2,u_4,u_6,u_8,u_{10},u_{14}]$ such that
\begin{align}\label{H_U}
\mathfrak{H}_\mathcal{U}=\left\{[u]\in\widehat{\mathcal{U}}=\mathbb{P}(2,4,6,8,10,14) \mid \Delta_{\mathcal{U}}(u)=0\right\}.
\end{align}
We have the double covering $\mathcal{U}_1$ of $\mathcal{U}-\mathfrak{S}_\mathcal{U} $ branched along $\mathfrak{H}_\mathcal{U} - \mathfrak{S}_\mathcal{U}$: 
\begin{align}\label{U1}
\mathcal{U}_1=\{([u],s)\in (\mathcal{U}-\mathfrak{S}_\mathcal{U})\times \mathbb{C} \mid s^2=\Delta_\mathcal{U}(u)\}.
\end{align}

We can obtain the lift
$\Phi_{Q_1}: \mathcal{U}_1 \rightarrow Q_1-\mathfrak{S}_{Q_1}$
of $\Phi |_{\mathcal{U}-\mathfrak{S}_\mathcal{U}}$
so that $\Phi_{Q_1}$ is equivalent under the action of $\Gamma/\Gamma' \simeq \mathbb{Z}/2\mathbb{Z}.$
Also, $\Phi_{Q_1}$ is lifted to
$\Phi_{\mathcal{D^\circ}}:\mathcal{U}_\mathcal{D^\circ} \rightarrow \mathcal{D^\circ}-\mathfrak{S}_\mathcal{D^\circ}$,
which is equivalent under the action of $\Gamma$.
We can consider the pull-back $\mathcal{U}_{(\mathcal{D}^\circ)^*} \rightarrow \mathcal{U}_{\mathcal{D}^\circ}$
of the principal bundle $\mathcal{U}^* \rightarrow \mathcal{U}$ 
by the composition
$\mathcal{U}_{\mathcal{D}^\circ} \rightarrow \mathcal{U}_1 \rightarrow \mathcal{U}-\mathfrak{S}_\mathcal{U} \hookrightarrow \mathcal{U}.$
Then, we have the lifted period mapping
$$
\Phi_{(\mathcal{D}^\circ)^*}: \mathcal{U}_{(\mathcal{D}^\circ)^*} \simeq (\mathcal{D}^\circ)^* -\mathfrak{S}_{(\mathcal{D}^\circ)^*},
$$
where $\mathfrak{S}_{(\mathcal{D}^\circ)^*}$ is the preimage of $\mathfrak{S}_{\mathcal{D}^\circ}$ under the projection.

\begin{lem}
The lifted period mapping $\Phi_{(\mathcal{D}^\circ)^*}$ induces an isomorphism $[\Phi_{(\mathcal{D}^\circ)^*}]:[\mathcal{U}_{(\mathcal{D}^\circ)^*}/(\mathbb{C}^*\times \Gamma)] \simeq [((\mathcal{D}^\circ)^*-\mathfrak{S}_{\mathcal{D}^\circ})/(\mathbb{C}^*\times \Gamma)].$
\end{lem}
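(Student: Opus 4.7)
The plan is to verify that $\Phi_{(\mathcal{D}^\circ)^*}$, which is already an isomorphism of the underlying analytic spaces by construction, is equivariant for commuting $\mathbb{C}^*$- and $\Gamma$-actions on both sides; the claimed isomorphism of quotient stacks then follows formally by descent.

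On the target $(\mathcal{D}^\circ)^* - \mathfrak{S}_{(\mathcal{D}^\circ)^*}$, the group $\Gamma$ acts $\mathbb{C}$-linearly through its embedding in $O^+({\bf A})$, while $\mathbb{C}^*$ acts by scalar multiplication on the affine cone over $\mathcal{D}^\circ$. These two actions manifestly commute, and both preserve $\mathfrak{S}_{(\mathcal{D}^\circ)^*}$, since $\mathfrak{S}_{\mathcal{D}^\circ}$ was defined $\Gamma$-equivariantly and the cone over it is $\mathbb{C}^*$-invariant by definition.

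On the source, the action is transferred through the three successive lifts. At the first stage, the nontrivial class in $\Gamma/\Gamma' \simeq \mathbb{Z}/2\mathbb{Z}$ (characterized by $\det = -1$ via (\ref{Gamma'})) acts on the double cover $\mathcal{U}_1 = \{([u],s) \mid s^2 = \Delta_{\mathcal{U}}(u)\}$ by the deck involution $s \mapsto -s$, and $\Phi_{Q_1}$ is equivariant by construction. At the second stage, $\mathcal{U}_{\mathcal{D}^\circ}$ refines this to a full $\Gamma$-action making $\Phi_{\mathcal{D}^\circ}$ equivariant. At the third stage, the pullback $\mathcal{U}_{(\mathcal{D}^\circ)^*} \to \mathcal{U}_{\mathcal{D}^\circ}$ of the principal $\mathbb{C}^*$-bundle $\mathcal{U}^* \to \mathcal{U}$ along a $\Gamma$-equivariant map automatically inherits a fiberwise $\mathbb{C}^*$-action commuting with the lifted $\Gamma$-action, and $\Phi_{(\mathcal{D}^\circ)^*}$ is $\mathbb{C}^*$-equivariant thanks to diagram (\ref{DiagramWeight}).

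Once equivariance is in hand, the conclusion is standard: an isomorphism of analytic spaces intertwining a group action induces an isomorphism of the corresponding quotient stacks, here with $G = \mathbb{C}^*\times\Gamma$. The main obstacle I anticipate is the first stage: one must confirm that the $\Gamma/\Gamma'$-action on $\mathcal{U}_1$ really coincides with the deck involution of the cover branched along $\mathfrak{H}_{\mathcal{U}}$ of (\ref{H_U}). This uses the fact that elements of $\Gamma'$, being products of an even number of reflections, must lift to the trivial automorphism of the square-root coordinate $s$, whereas a single reflection $\sigma_\delta$ with $\delta\in\Delta({\bf A})$ must swap the two sheets of $\mathcal{U}_1$ because its fixed hyperplane lies over a component of $\mathfrak{H}_{\mathcal{U}}$. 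Once this bookkeeping about signs is pinned down, the remainder of the argument reduces to formal properties of pullbacks and quotient stacks.
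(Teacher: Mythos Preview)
Your proposal is correct and follows essentially the same approach as the paper: the paper's own proof consists only of a reference to \cite{Na} Section 3.3 together with the commutative diagram (\ref{OrbitDiagram}), which encodes precisely the tower of $\mathbb{Z}/2\mathbb{Z}$-, $\Gamma$-, and $\mathbb{C}^*$-equivariant lifts you spell out. Your unpacking of why the deck involution on $\mathcal{U}_1$ matches the $\Gamma/\Gamma'$-action (via $\det(\sigma_\delta)=-1$ and the identification of $\mathfrak{H}_{\mathcal{U}}$ with the image of the reflection hyperplanes) is exactly the content the diagram is meant to convey.
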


\begin{proof}
The proof is similar to \cite{Na} Section 3.3.
See the  diagram (\ref{OrbitDiagram}) also.
\begin{align}\label{OrbitDiagram}
  \xymatrix@C=50pt{
\mathcal{U}_{(\mathcal{D}^\circ)^*}\ar[r]^{\mathbb{C}^*\text{-bundle}}   &  \mathcal{U}_\mathcal{D^\circ}  \ar[r]^{\Phi_{\mathcal{D}^\circ}\hspace{5mm}} \ar[d]_{p_{\mathcal{D}^\circ}}^{\text{free}} & \mathcal{D}^\circ-\mathfrak{S}_{\mathcal{D}^\circ} \ar[d]^{\text{free}} &\ar[l]_{\hspace{1mm}\mathbb{C}^*\text{-bundle}} (\mathcal{D}^\circ)^*-\mathfrak{S}_{(\mathcal{D}^\circ)^*}  \\
   &  \mathcal{U}_1 \ar[r]^{\Phi_{Q_1}\hspace{5mm}}\ar[d]_{p_1}^{\mathbb{Z}/2\mathbb{Z}} & Q_1-\mathfrak{S}_{Q_1}\ar[d]^{\mathbb{Z}/2\mathbb{Z}}& \\
 & \mathcal{U}-\mathfrak{S}_\mathcal{U} \ar[r]^{\Phi |_{ \mathcal{U}-\mathfrak{S}_\mathcal{U}}\hspace{5mm}}\ar@{^{(}-_>}[d] &Q-\mathfrak{S}_{Q}\ar@{^{(}-_>}[d] & \\
\mathcal{U}^* \ar[r]^{\mathbb{C}^*\text{-bundle}}  &\mathcal{U} \ar[r]^{\Phi \hspace{5mm}}&Q=\mathcal{D}^\circ/\Gamma&\ar[l]_{\hspace{1mm}\mathbb{C}^*\text{-bundle}} (\mathcal{D}^\circ)^*/\Gamma \\
}
\end{align}
\end{proof}

\begin{prop}\label{PropPicard}
The Picard group
${\rm Pic}(\mathbb{O})  $ 
of the orbifold $\mathbb{O}$ is isomorphic to $ \mathbb{Z}\oplus (\mathbb{Z}/2\mathbb{Z}).$
\end{prop}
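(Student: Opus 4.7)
The plan is to fit ${\rm Pic}(\mathbb{O})$ into a short split exact sequence with ${\rm Pic}(\mathcal{U})$ and ${\rm Char}(\Gamma)$ as the end terms, and then substitute the two known groups. Since the $\mathbb{C}^*$-factor of $\Gamma\times\mathbb{C}^*$ acts freely on $(\mathcal{D}^\circ)^*$ as the fiber-scaling of the tautological principal $\mathbb{C}^*$-bundle $(\mathcal{D}^\circ)^*\to\mathcal{D}^\circ$, one has a canonical equivalence of Deligne-Mumford stacks $\mathbb{O}\simeq[\mathcal{D}^\circ/\Gamma]$. Under the period isomorphism (\ref{PhiUIso}), the coarse moduli space of this orbifold embeds as $\mathcal{U}\subset\widehat{\mathcal{U}}=\mathbb{P}(2,4,6,8,10,14)$; combining Proposition \ref{PropCodim} with Hartogs' theorem then yields ${\rm Pic}(\mathcal{U})={\rm Pic}(\widehat{\mathcal{U}})=\mathbb{Z}\langle\mathcal{O}(1)\rangle$, as was already used implicitly in Theorem \ref{ThmId}.

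Next I would construct the short exact sequence
\[
0 \longrightarrow {\rm Pic}(\mathcal{U}) \xrightarrow{\pi^*} {\rm Pic}(\mathbb{O}) \longrightarrow {\rm Char}(\Gamma) \longrightarrow 0
\]
as follows. To each orbifold line bundle $L$ on $\mathbb{O}$, associate the representation of the stabilizer $\Gamma_p$ on the fiber $L_p$ at points $p\in\mathcal{D}^\circ$. By Proposition \ref{PropKneser} the group $\Gamma$ is generated by reflections $\sigma_\delta$, so outside the codimension-$2$ locus $\mathfrak{S}_{\mathcal{D}^\circ}$ every non-trivial stabilizer is cyclic of order $2$, and $\Gamma$-equivariance forces the collection of local characters to come from a single global character $\chi_L\in{\rm Char}(\Gamma)=\{{\rm id},{\rm det}\}\cong\mathbb{Z}/2\mathbb{Z}$. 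The kernel of $L\mapsto\chi_L$ consists of those bundles whose stabilizers act trivially on every fiber—exactly the pullbacks from $\mathcal{U}$—while surjectivity and a natural splitting are furnished by the assignment $\chi\mapsto\mathcal{O}_{\mathcal{D}^\circ}$ equipped with the $\Gamma$-linearization through $\chi$. Substituting ${\rm Pic}(\mathcal{U})\cong\mathbb{Z}$ and ${\rm Char}(\Gamma)\cong\mathbb{Z}/2\mathbb{Z}$ then yields ${\rm Pic}(\mathbb{O})\simeq\mathbb{Z}\oplus\mathbb{Z}/2\mathbb{Z}$.

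The main obstacle is the exactness at the middle term: showing that a $\Gamma$-equivariant line bundle with everywhere-trivial stabilizer characters really descends to $\mathcal{U}$. This is standard orbifold theory when stabilizers are abelian, but it must be applied with care here, because the reflection hyperplanes a priori decompose into several $\Gamma$-orbits and one has to argue that all of them impose the same character constraint—which is precisely the content of Proposition \ref{PropKneser}, identifying ${\rm Char}(\Gamma)$ with the single $\mathbb{Z}/2\mathbb{Z}$ generated by ${\rm det}$. The extension across $\mathfrak{S}_{\mathcal{D}^\circ}$ is then handled by Proposition \ref{PropCodim} and Hartogs. Once the sequence is established, the $\mathbb{Z}$-summand gives exactly the weight-graded ring of ${\rm id}$-modular forms of Theorem \ref{ThmId}, while the $2$-torsion summand is realized concretely by the branched double cover $\mathcal{U}_1\to\mathcal{U}-\mathfrak{S}_\mathcal{U}$ of (\ref{U1}), which will underlie the ${\rm det}$-modular forms of Theorem \ref{Thmdet}.
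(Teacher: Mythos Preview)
Your approach is correct and conceptually cleaner, but it is genuinely different from the paper's. You work abstractly: after identifying $\mathbb{O}\simeq[\mathcal{D}^\circ/\Gamma]$, you set up the split exact sequence
\[
0\longrightarrow{\rm Pic}(\mathcal{U})\xrightarrow{\ \pi^*\ }{\rm Pic}(\mathbb{O})\longrightarrow{\rm Char}(\Gamma)\longrightarrow 0
\]
via the isotropy-character map, then plug in ${\rm Pic}(\mathcal{U})\simeq\mathbb{Z}$ and ${\rm Char}(\Gamma)\simeq\mathbb{Z}/2\mathbb{Z}$. The paper instead builds an explicit finite model: it extends the double cover $\mathcal{U}_1$ of (\ref{U1}) to $\mathcal{V}_1=\{s^2=\Delta_{\mathcal{U}}(u)\}$ over all of $\widehat{\mathcal{U}}$, forms the stack $\mathbb{V}_1=[\mathcal{V}_1/(\mathbb{Z}/2\mathbb{Z})]$, and reads off ${\rm Pic}(\mathbb{V}_1)=\mathbb{Z}\langle p_{\mathbb{V}_1}^*\mathcal{O}_{\widehat{\mathcal{U}}}(1)\rangle\oplus(\mathbb{Z}/2\mathbb{Z})\langle g\rangle$ directly; only then does it invoke the codimension-$2$ comparisons to pass from $\mathbb{V}_1$ through $[\widehat{Q}_1/(\mathbb{Z}/2\mathbb{Z})]$ to $\mathbb{O}$. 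Your route hides the geometry inside a general descent statement (orbifold bundles with trivial isotropy characters descend to the coarse space), whereas the paper's route makes the two generators completely explicit from the start. The practical payoff of the paper's version is that the identification $\mathcal{O}_{[\mathcal{U}_1/(\mathbb{Z}/2\mathbb{Z})]}(2\mathfrak{H}_{\mathcal{U}_1})\simeq[p_1]^*\mathcal{O}_{\mathcal{U}}(d)$ is already on the table for the canonical-bundle computation in Proposition~\ref{PropDeg}; in your framework you would need to reconstruct this relation separately.
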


\begin{proof}
Set $\mathcal{V}_1=\left\{([u],s)\in \widehat{\mathcal{U} } \times \mathbb{C} \mid s^2=\Delta_\mathcal{U}(u) \right\}$.
Then, $\mathcal{U}_1$ of (\ref{U1}) satisfies $\mathcal{U}_1 \subset \mathcal{V}_1$ and ${\rm codim}(\mathcal{V}_1-\mathcal{U}_1)\geq 2$, from (\ref{UHartogus}).
Recall that $\Phi$ in (\ref{OrbitDiagram}) is extended to the identification (\ref{HatPhi}).
This $\widehat{\Phi}$ is lifted to $\widehat{\Phi}_{Q_1}: \mathcal{V}_1 \simeq \widehat {Q}_1$, which is equivalent under the $(\mathbb{Z}/2\mathbb{Z})$-action.
Here, $\widehat{Q}_1 $ is a double covering of $\widehat{\mathcal{D}^\circ/\Gamma}^{\bf L}.$

We consider the orbifold $\mathbb{V}_1=[\mathcal{V}_1/(\mathbb{Z}/2\mathbb{Z})]$ with the structure morphism $p_{\mathbb{V}_1}:\mathbb{V}_1 \rightarrow \widehat{\mathcal{U}}.$
Now, the Picard group ${\rm Pic}(\mathbb{V}_1)$ is generated by $\mathcal{O}_{\mathbb{V}_1}(1) := p_{\mathbb{V}_1}^* \mathcal{O}_{\widehat{\mathcal{U}}}(1)$ and the generator $g$ of $\mathbb{Z}/2\mathbb{Z}$.
We remark that this generator $g$  is corresponding to  $ {\rm det} \in {\rm Char}(\Gamma)$ (see Proposition \ref{PropKneser} and (\ref{Gamma'})).
The divisor $\{\Delta_\mathcal{U}(u)=0\}$ is corresponding to the reflection hypersurfaces for our lattice ${\bf A}$ via $\Phi$ in (\ref{OrbitDiagram}).
Therefore, 
any element of ${\rm Pic}(\mathbb{V}_1) $ is given by $\mathcal{O}_{\mathbb{V}_1}(1) ^{\otimes k} \otimes g^l$ for $k\in \mathbb{Z}$ and $l\in \mathbb{Z}/2\mathbb{Z}$.
By considering  $\left[\widehat{\Phi}_{Q_1}\right] : \mathbb{V}_1 \simeq \left[\widehat{Q}_1/(\mathbb{Z}/2\mathbb{Z})\right]$, we obtain 
${\rm Pic}\left( \left[\widehat{Q}_1/(\mathbb{Z}/2\mathbb{Z})\right]\right) \simeq \mathbb{Z} \oplus (\mathbb{Z}/2\mathbb{Z}).$

Recall that the action of $\Gamma'$ on $\mathcal{D}^\circ-\mathfrak{S}_{\mathcal{D}^\circ}$ is free.
From the fact that ${\rm codim}\left(\widehat{Q}_1-Q_1\right)\geq 2$, we have
\begin{align*}
{\rm Pic}(\mathbb{O}) = {\rm Pic}([(\mathcal{D}^\circ)^*/(\mathbb{C} \times \Gamma)]) = {\rm Pic}([Q_1/(\mathbb{Z}/2 \mathbb{Z})])\simeq 
{\rm Pic}\left(\left[\widehat{Q}_1/(\mathbb{Z}/2 \mathbb{Z})\right]\right)\simeq \mathbb{Z} \oplus (\mathbb{Z}/2\mathbb{Z}).
\end{align*}
\end{proof}

When we consider holomorphic sections of line bundles,
analytic subsets of codimension at least $2$ do not affect the results due to Hartogs's phenomenon.
So, in this section,
we shall omit such analytic sets.
Namely, from now on,
we will often omit such sets (like $\mathfrak{S}_\mathcal{U}$ or $\mathfrak{S}_{Q}).$

\begin{prop}\label{PropDeg}
The weight of $\Delta_\mathcal{U}(u)$ is equal to $98.$
\end{prop}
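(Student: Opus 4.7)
The plan is to decompose the reflection divisor $\mathfrak{H}_\mathcal{U}$ of (\ref{H_U}) into its irreducible components and sum their weights. Under the isomorphism $\Phi:\mathcal{U}\simeq\mathcal{D}^\circ/\Gamma$ of (\ref{PhiUIso}), each reflection hypersurface $\delta^\perp\subset\mathcal{D}^\circ$ (for a $(-2)$-vector $\delta\in{\bf A}$ not in the $\Gamma$-orbit of $\alpha_3$) pulls back to a divisor in $\mathcal{U}$ on which the N\'eron--Severi lattice of $S([u])$ acquires exactly one new $(-2)$-class. Hence $\mathfrak{H}_\mathcal{U}$ is the union of the Picard-rank-jump loci in $\mathcal{U}$ different from the enhancement $E_7\to E_8$, the latter being excised as $\mathcal{H}$.

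By Lemma \ref{LemSingular} the loci in $\mathfrak{A}_0$ on which a new effective $(-2)$-class appears are exactly $\{a_0=0\}$ (enhancement $E_7\to E_8$, coinciding with $\mathcal{H}$), $\{a_{14}=0\}$ (enhancement $E_6\to E_7$) and $\{d_{84}(a)=0\}$ (coalescence $I_1+I_1\to I_2$ producing a new $A_1$-singularity); the locus $\{r(a)=0\}$ only introduces a type-$II$ fibre with no singularity and so contributes no $(-2)$-class. Under the canonical-form substitution $u_2=a_2/a_0$, $u_4=a_4$, $u_6=a_6$, $u_8=a_0 a_8$, $u_{10}=a_0 a_{10}$, $u_{14}=a_0^2 a_{14}$ on the chart $\{a_0\ne 0\}$, setting $a_0=1$ transforms $\{a_{14}=0\}$ into $\{u_{14}=0\}$, a weighted-homogeneous hypersurface of weight $14$, and $\{d_{84}(a)=0\}$ into $\{d_{84}(1,u_2,u_4,u_6,u_8,u_{10},u_{14})=0\}$, of weight $84$ inherited from that of $d_{84}(a)$ (since $a_0$ carries weight $0$, weighted homogeneity is preserved under the specialization).

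Therefore $\Delta_\mathcal{U}(u)$ is, up to a nonzero scalar, $u_{14}\cdot d_{84}(1,u_2,u_4,u_6,u_8,u_{10},u_{14})$, of total weight $14+84=98$. The main obstacle I anticipate is the completeness-plus-multiplicity assertion: one must verify that no further $\Gamma$-orbit of $(-2)$-vectors in ${\bf A}$ produces a hidden irreducible component of $\mathfrak{H}_\mathcal{U}$, and that the two factors above are reduced. Completeness should be handled by classifying $\Gamma$-orbits of $(-2)$-vectors in ${\bf A}$ via the discriminant form ${\bf A}^\vee/{\bf A}\simeq\mathbb{Z}/3\oplus\mathbb{Z}/2$ and matching each orbit to one of the three degenerations in Lemma \ref{LemSingular}; the multiplicity-one assertion follows because a generic point of each of the two loci yields Picard number exactly $16$, so the corresponding reflection hyperplane meets the image of $\mathcal{U}$ transversally.
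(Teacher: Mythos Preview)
Your approach differs fundamentally from the paper's: the paper computes the weight of $\Delta_\mathcal{U}$ by a canonical-bundle comparison (adjunction on $\widehat{\mathcal{U}}=\mathbb{P}(2,4,6,8,10,14)$ versus adjunction on the quadric $\mathcal{D}\subset\mathbb{P}^6$, linked through the branched double cover $\mathcal{U}_1\to\mathcal{U}$), and only \emph{afterwards}, in Theorem~\ref{Thmdet}, uses this weight together with the inclusion $\{u_{14}d_{84}=0\}\subset\{\Delta_\mathcal{U}=0\}$ to deduce the decomposition $\Delta_\mathcal{U}=\mathrm{const}\cdot u_{14}\,d_{84}$. You are attempting to reverse this logic, proving the decomposition first and reading off the weight.

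The gap is in your completeness step. Classifying $\Gamma$-orbits of $(-2)$-vectors in ${\bf A}$ via the discriminant form (Eichler's criterion, applicable since ${\bf A}\supset U^{\oplus 2}$) yields exactly \emph{two} orbits: divisor~$2$ (the orbit of $\alpha_3$, giving $\mathcal{H}$) and divisor~$1$. After excising $\mathcal{H}$ you are left with a \emph{single} $(-2)$-orbit, which corresponds to $\{d_{84}=0\}$ (the $I_2$ degeneration; orthogonal complement $U^{\oplus 2}\oplus\langle -6\rangle\oplus\langle -2\rangle$, determinant $12$). Your method therefore predicts weight $84$, not $98$. The component $\{u_{14}=0\}$ is \emph{not} a $(-2)$-hyperplane: its generic transcendental lattice ${\bf A}_1'=U^{\oplus 2}\oplus A_1(-1)^{\oplus 2}$ has determinant $4$, forcing the primitive orthogonal vector $v\in{\bf A}$ to satisfy $v^2=-6$, $\mathrm{div}(v)=3$ (take $v=\alpha_1+2\alpha_2$). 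The reflection $\sigma_v$ lies in $O^+({\bf A})$ but \emph{not} in $\tilde{O}({\bf A})$ (it exchanges $y_1$ and $y_2$ in the discriminant group), so $\sigma_v\notin\Gamma$. What actually places $v^\perp$ inside $\mathfrak{H}_{\mathcal{D}^\circ}$ is that $-\sigma_v\in\Gamma$: one checks $-\sigma_v$ acts trivially on ${\bf A}^\vee/{\bf A}$ (since $-y_1\equiv y_2$ and $-y_3\equiv y_3$), and it fixes $\mathbb{P}(v^\perp)\supset v^\perp\cap\mathcal{D}$ pointwise. Thus $\mathfrak{H}_{\mathcal{D}^\circ}$ contains hyperplanes not of the form $\delta^\perp$ for $\delta\in\Delta({\bf A})$, and a classification of $(-2)$-orbits alone cannot establish the decomposition you need. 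Making your route work would require classifying all primitive $v$ with $\pm\sigma_v\in\Gamma$ up to $\Gamma$-equivalence and matching the resulting Heegner divisors to the degeneration loci---substantially more than you indicate, and precisely what the paper's canonical-bundle argument circumvents.
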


\begin{proof}
Since $\mathcal{U}$ is a Zariski open set in $\widehat{\mathcal{U}}=\mathbb{P}(2,4,6,8,10,14)$, the canonical bundle $\Omega_{\widehat{\mathcal{U}}}$ is calculated as 
$$
\Omega_\mathcal{U} \simeq \mathcal{O}_\mathcal{U}(-2-4-6-8-10-14)=\mathcal{O}_\mathcal{U}(-44).
$$
Let $p_1$ be the double covering $\mathcal{U}_1 \rightarrow \mathcal{U}$ branched along $\mathfrak{H}_\mathcal{U}$.
We  obtain the isomorphism $\Omega_{\mathcal{U}_1} \simeq p_1^* \Omega_\mathcal{U} \otimes \mathcal{O}_{\mathcal{U}_1} (\mathfrak{H}_{\mathcal{U}_1})$, by considering the holomorphic differential forms.
So, we have
\begin{align*}
\Omega_{[\mathcal{U}_1/(\mathbb{Z}/2\mathbb{Z})]} \simeq [p_1]^*\Omega_{\mathcal{U}} \otimes \mathcal{O}_{[\mathcal{U}_1/(\mathbb{Z}/2\mathbb{Z})]}(\mathfrak{H}_{\mathcal{U}_1}) \simeq [p_1]^*\mathcal{O}_{\mathcal{U}} (-44) \otimes \mathcal{O}_{[\mathcal{U}_1/(\mathbb{Z}/2\mathbb{Z})]}(\mathfrak{H}_{\mathcal{U}_1}).
\end{align*}

From the proof of Proposition \ref{PropPicard}, the orbifold $\mathbb{O}$ is equivalent to $[\mathcal{U}_1/(\mathbb{Z}/2\mathbb{Z})]$.
So,  ${\rm Pic}([\mathcal{U}_1/(\mathbb{Z}/2\mathbb{Z})])$ is isomorphic to $\mathbb{Z} \oplus (\mathbb{Z}/2\mathbb{Z})$.
Let $d$ be the weight of $\Delta_\mathcal{U}(u)$.
Then, we have $[p_1]^*\mathcal{O}_\mathcal{U} (d) \simeq \mathcal{O}_{[\mathcal{U}_1/(\mathbb{Z}/2\mathbb{Z})]} (2\mathfrak{H}_{\mathcal{U}_1})$.
This implies that the  direct summand $\mathbb{Z}/2\mathbb{Z}$ of ${\rm Pic}([\mathcal{U}_1/(\mathbb{Z}/2\mathbb{Z})])$ is generated by $[p_1]^*\mathcal{O}_{\mathcal{U}}(-d/2)\otimes \mathcal{O}_{[\mathcal{U}_1/(\mathbb{Z}/2\mathbb{Z})]}(\mathfrak{H}_{\mathcal{U}_1})$.
Here, we do not need to worry about whether $-d/2$ is an integer,
for all weights of $\widehat{\mathcal{U}}$ are even numbers.

Since $\mathcal{D}^\circ$ is a Zariski open set in a quadratic hypersurface in the projective space $\mathbb{P}^6(\mathbb{C})$,
we apply the adjunction formula to $\mathcal{D}^\circ$ and obtain
$
\Omega_{\mathcal{D}^\circ} \simeq \mathcal{O}_{\mathcal{D}^\circ} (7-2)=\mathcal{O}_{\mathcal{D}^\circ} (5)$,
where the weight is concordant with the $\mathbb{C}^*$-action indicated in (\ref{DiagramWeight}).
This implies that the canonical orbibundle $\Omega_\mathbb{O}$ is isomorphic to $\mathcal{O}_\mathbb{O} (5) \otimes {\rm det}$.

By summarizing the above properties,
we have
\begin{align*}
& \mathcal{O}_\mathbb{O} (5) \otimes {\rm det} \simeq \Omega_\mathbb{O} \simeq \Omega_{[\mathcal{U}_1/(\mathbb{Z}/2\mathbb{Z})]}\\
&\simeq [p_1]^*\mathcal{O}_\mathcal{U}(-44)\otimes \mathcal{O}_{[\mathcal{U}_1/(\mathbb{Z}/2\mathbb{Z})]}(\mathfrak{H}_{\mathcal{U}_1})\\
&\simeq [p_1]^*\mathcal{O}_\mathcal{U}\Big(-44+\frac{d}{2}\Big)\otimes \Big([p_1]^*\mathcal{O}_\mathcal{U}\Big(-\frac{d}{2}\Big)\otimes \mathcal{O}_{[\mathcal{U}_1/(\mathbb{Z}/2\mathbb{Z})]}(\mathfrak{H}_{\mathcal{U}_1} ) \Big)
\end{align*} 
and we obtain $d=98$.
\end{proof}

\begin{thm}\label{Thmdet}
(1) There exist holomorphic functions $s_7$ of weight $7$ and $s_{42} $ of weight $42$ on $(\mathcal{D}^\circ)^*$ such that
$$
s_7^2 = u_{14} , \quad \quad s_{42}^2 = d_{84}(u),
$$
where $d_{84}(u)$ is the polynomial studied in Lemma \ref{LemSingular}.

(2)
The holomorphic function $s_{49}=s_7 s_{42}$ on $(\mathcal{D}^\circ)^*$ gives a modular form of weight $49$ and character ${\rm det}$.
It holds $\mathcal{A}^\circ (\Gamma,{\rm det})=s_{49} \mathcal{A}^\circ (\Gamma,{\rm id})$.
\end{thm}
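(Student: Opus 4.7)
The plan is to factor $\Delta_\mathcal{U}(u)$ explicitly as a product of the two loci from Lemma \ref{LemSingular} giving rise to new $(-2)$-curves, to extract holomorphic square roots using the ramification of $(\mathcal{D}^\circ)^*\to\mathcal{U}^*$, and finally to identify $s_{49}=s_7s_{42}$ as a generator of $\mathcal{A}^\circ(\Gamma,\mathrm{det})$ over $\mathcal{A}^\circ(\Gamma,\mathrm{id})$.

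First I would make $\Delta_\mathcal{U}(u)$ explicit. Via the period isomorphism $\Phi$, the reflection divisor $\mathfrak{H}_\mathcal{U}=\{\Delta_\mathcal{U}(u)=0\}$ consists of those $[u]\in\mathcal{U}$ at which $S([u])$ acquires a new $(-2)$-curve orthogonal to $\mathbf{M}$. By Lemma \ref{LemSingular} this happens exactly along $\{u_{14}=0\}$ (where the $E_6$ fibre enlarges to $E_7$) and along $\{d_{84}(u)=0\}$ (where a new $A_1$-singularity appears on $S([u])$); the type-$II$ locus $r(u)=0$ contributes no new $(-2)$-class. Hence both $u_{14}$ and $d_{84}(u)$ divide $\Delta_\mathcal{U}(u)$, and since their total weight is $14+84=98=\deg\Delta_\mathcal{U}$ by Proposition \ref{PropDeg},
\[
\Delta_\mathcal{U}(u)=c\cdot u_{14}\cdot d_{84}(u)\qquad(c\in\mathbb{C}^\times).
\]

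Next I would extract the square roots. By Proposition \ref{PropKneser} the generic stabilizer of a reflection hyperplane in $\Gamma$ is $\{1,\sigma_\delta\}$, so the projection $(\mathcal{D}^\circ)^*\to\mathcal{U}^*$ ramifies with index $2$ along each component of $\mathfrak{H}_\mathcal{U}$. The pullbacks of $u_{14}$ and $d_{84}(u)$ to $(\mathcal{D}^\circ)^*$ therefore have divisors of the form $2D_7$ and $2D_{42}$ for reduced effective divisors $D_7,D_{42}$, so each is the square of a holomorphic section of the corresponding line bundle $\mathcal{O}(D_7),\mathcal{O}(D_{42})$. This yields holomorphic $s_7,s_{42}$ on $(\mathcal{D}^\circ)^*$ with $s_7^2=u_{14}$ and $s_{42}^2=d_{84}(u)$, of weights $7$ and $42$ respectively, proving part (1). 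Setting $s_{49}=s_7s_{42}$, one has $s_{49}^2=u_{14}\cdot d_{84}(u)\in\mathcal{A}^\circ_{98}(\Gamma,\mathrm{id})$, so $s_{49}$ transforms under $\Gamma$ by a character $\chi\colon\Gamma\to\{\pm 1\}$. Since $s_{49}$ vanishes to order exactly $1$ on every reflection hyperplane, each reflection $\sigma_\delta$ must satisfy $\chi(\sigma_\delta)=-1$, and since such reflections generate $\Gamma$, $\chi=\mathrm{det}$.

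For the ring identity in (2), any $f\in\mathcal{A}^\circ(\Gamma,\mathrm{det})$ satisfies $f(\sigma_\delta Z)=-f(Z)$, so $f$ vanishes on every reflection hypersurface of $\mathcal{D}^\circ$; then $f/s_{49}$ is a $\Gamma$-invariant holomorphic function on $(\mathcal{D}^\circ)^*$ whose only possible poles lie in $\mathcal{H}$, i.e.\ $f/s_{49}\in\mathcal{A}^\circ(\Gamma,\mathrm{id})$, and the reverse inclusion $s_{49}\cdot\mathcal{A}^\circ(\Gamma,\mathrm{id})\subset\mathcal{A}^\circ(\Gamma,\mathrm{det})$ is immediate. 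The main obstacle in this plan is the first step: confirming that $\{u_{14}=0\}$ and $\{d_{84}=0\}$ exhaust the irreducible components of $\mathfrak{H}_\mathcal{U}$ (no hidden components arising from other reflection orbits) and that each occurs with multiplicity exactly one in $\Delta_\mathcal{U}$. This reduces to a $\Gamma$-orbit analysis of $(-2)$-vectors in $\mathbf{A}$, parallel to the argument used in Lemma \ref{LemArrangement}.
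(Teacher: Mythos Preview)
Your proposal is correct and follows essentially the same route as the paper: use Lemma \ref{LemSingular} to see that $\{u_{14}=0\}$ and $\{d_{84}(u)=0\}$ lie in the reflection locus $\mathfrak{H}_\mathcal{U}$, invoke the weight computation of Proposition \ref{PropDeg} to force $\Delta_\mathcal{U}(u)=c\cdot u_{14}\,d_{84}(u)$, extract square roots from the order-$2$ ramification along reflection hyperplanes, and deduce the structure of $\mathcal{A}^\circ(\Gamma,\det)$ from the vanishing of $\det$-forms along these hyperplanes. Your identification of the character via $\chi(\sigma_\delta)=-1$ is in fact more explicit than the paper's treatment, which simply appeals to the $\mathbb{Z}/2\mathbb{Z}$ structure of $\Gamma/\Gamma'$.

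One small remark: the ``main obstacle'' you flag at the end is not actually an obstacle, and you have already disposed of it. Once you know $u_{14}\,d_{84}(u)\mid \Delta_\mathcal{U}(u)$ and both sides have weight $98$, there is no room for further components or higher multiplicities; no separate $\Gamma$-orbit analysis of $(-2)$-vectors is required. The paper proceeds exactly this way, relying solely on the degree match.
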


\begin{proof}
(1) 
By the argument in this section,
the divisor $\{[u]\in\mathcal{U} \mid \Delta_\mathcal{U}(u)=0\}$ corresponds to the union of reflection hyperplanes of $\{H_\delta \mid \delta\in \Delta({\bf A})\} - \mathcal{H}$.
Here, $H_\delta$ is a reflection hyperplane defined by $\delta\in \Delta({\bf A})$ and 
$\mathcal{H}$ is the arrangement of $\gamma H_0$  ($\gamma\in \Gamma$, $H_0=\{\xi_7=0\}$)   as in Section 1.1.
Recalling the observation of degenerations of our lattice polarized $K3$ surfaces in Lemma \ref{LemSingular}
and the meaning of the canonical form (\ref{SK3Can}),
we have
\begin{align}\label{DeltaDecomp}
\{[u]\in \mathcal{U} \mid d_{84}(u)=0\} \cup \{[u] \in \mathcal{U} \mid u_{14}=0\}\subset \{[u]\in \mathcal{U} \mid \Delta_\mathcal{U}(u)=0\}.
\end{align}
The inclusion (\ref{DeltaDecomp}) means that  $u_{14} d_{84}(u)$ divides $\Delta_\mathcal{U}(u)$ in $\mathbb{C}[u_2,u_4,u_6,u_8,u_{10},u_{14}].$
On the other hand,
Proposition \ref{PropDeg} says that $\Delta_\mathcal{U}(u)$ is of weight $98$.
Thus, we have the irreducible decomposition
\begin{align}\label{Decomposition}
\Delta_\mathcal{U}(u) = {\rm const } \cdot u_{14} d_{84}(u).
\end{align}
Since the double covering $p_1:\mathcal{U}_1 \rightarrow \mathcal{U}$ in the diagram (\ref{OrbitDiagram}) is branched along the divisor $\{[u]\in \mathcal{U} \mid \Delta_\mathcal{U}(u)=0\}$,
(\ref{Decomposition}) implies that there is a holomorphic function $s_7$ ($s_{42}$, resp.) on $(\mathcal{D}^\circ)^*$ satisfying
$s_7^2=u_{14}$ ($s_{42}^2=d_{84}(u)$, resp.).

(2) 
Recall that
 ${\rm det}\in {\rm Char}(\Gamma)$ is coming from the action of $\Gamma/\Gamma'\simeq \mathbb{Z}/2\mathbb{Z}$ which defines the double covering $p_1:\mathcal{U}_1\rightarrow \mathcal{U}$.
By the Definition \ref{DfMeroModularForm} and the meaning of  (\ref{U1}), 
 every modular form of character ${\rm det}$ vanishes on the preimage of 
 $\mathfrak{H}_{\mathcal{D}^\circ}$ by the canonical projection $(\mathcal{D}^\circ)^* \rightarrow \mathcal{D}^\circ$.
 Since this $p_1$ is branched along the divisor $\{\Delta_\mathcal{U}(u)=0\},$ (\ref{DeltaDecomp}) and (\ref{Decomposition}) show that 
every modular form of character ${\rm det}$ is given by a product of $s_{49}:=s_7 s_{42}$ and a modular form of character ${\rm id}.$
\end{proof}

\begin{rem}
The Weierstrass equation of (\ref{SK3}) is essential for our purpose.

We have another expression of elliptic $K3$ surfaces with  singular fibres of type (\ref{SingularFibre})
given by the equation
 \begin{align}\label{SOp}
z_1^2=y_1^3+(b_1 x_1^5 w_1+ b_4 x_1^4 w_1^4+ b_7 x_1^3 w_1^7) y_1+(b_0 x_1^8 +b_3 x_1^7 w_1^3 + b_6 x_1^6 w_1^6 + b_9 x_1^5 w_1^9)
\end{align}
 of weight $24$, where $(x:y:z:w)\in \mathbb{P}(3,8,12,1)$.
However, 
the expression (\ref{SOp}) is not appropriate
to construct  modular forms.
Although (\ref{SOp}) can be birationally transformed to the Weierstrass form (\ref{SK3}), 
it is impossible to construct correct modular forms from the parameters in (\ref{SOp}).
It seems that we can obtain modular forms of weight $1,3,4,6,7,9$ on $(\mathcal{D}^\circ)'$ from (\ref{SOp}),
which is a complement of an arrangement corresponding to the condition $b_0 =0.$
However, we can see that this arrangement does not satisfy the condition of Theorem \ref{ThmLooijenga}.
So, the expression (\ref{SOp}) induces an erroneous use of the theory of the  Looijenga compactifications.

For example,
it seems that
we can obtain a modular form of weight $7$ coming from the parameter $b_7$.
We can see that the zero set of this modular form coincides with  the arrangement $\mathcal{H}$ in Lemma \ref{LemArrangement}.
However, as stated in \cite{L} (see also \cite{LBall} Section 6),
if an arrangement gives the zero set of a modular form,
then this does not satisfy the condition of Theorem \ref{ThmLooijenga}.
This contradicts to Lemma \ref{LemArrangement}.

Thus,  
our expression of  (\ref{SK3}) is suitable for the theory of  the Looijenga compactifications
and  effective to construct modular forms. 
\end{rem}

\begin{rem}\label{RemKappa}
Theorem \ref{Thmdet}  supports the relation between our sequence of the families and complex reflection groups.
The weight $42$ of $s_{42} $ is coming from the discriminant of the right hand side of (\ref{SK3Can}).
We note that $126=42 \kappa_0$, 
where $\kappa_0=3$ as in Table 1, 
is equal to the number of reflections of order $2$ for the group No.34 (see \cite{LT} Appendix D).

Such a phenomenon occurs in each case for $\mathfrak{F}_j$  ($j\in \{1,2,3\}$).
In the case for $j=1$ ($2,3$, resp.),
there is a holomorphic function of weight $45$ ($30$, $15$, resp.) coming from the discriminant of the elliptic $K3$ surfaces of \cite{Na} (\cite{CD}, \cite{NHilb}, resp.).
Then,  $45=45\kappa_1$ ($60=30\kappa_2$, $15=15\kappa_3$, resp.) is equal to the number of reflections of order $2$ for the group No.33 (No.31, No.23, resp.).  
Each of these holomorphic functions gives a factor of a modular form of a non-trivial character.
Also, there are explicit expressions of them in terms of the theta functions in Table 4.
\end{rem}

\section{Transcendental lattice for  family $\mathfrak{G}_0$ of Kummer-like surfaces with Picard number $15$}

In Section 2,
we determined the lattice structure for the family $\mathfrak{F}_0$ via a natural consideration of singular fibres for elliptic surfaces.
Also, in fact, the lattices for the subfamilies  $\mathfrak{F}_1,\mathfrak{F}_2,\mathfrak{F}_3$ in Proposition \ref{PropLatticeF} can be determined in a similar  way.

However, 
as for the families $\mathfrak{G}_j$ $(j\in \{1,2,3\})$ and $\mathfrak{G}_1'$ of the Kummer-like surfaces,
it is much harder to determine their lattices correctly.
For example,
\begin{itemize}
\item
The family $\mathfrak{G}_2$ is the family of the Kummer surfaces for principally polarized Abelian surfaces.
For a precise study for the lattice structure of the Kummer surfaces,
Nikulin \cite{NiKummer} introduces a particular lattice which is called the Kummer lattice.
Also, Morrison \cite{Mo} studies an interesting viewpoint called the Shioda-Inose structure
for $K3$ surfaces whose Picard numbers are greater than $17$.

\item
In order to determine the transcendental lattice for the family $\mathfrak{G}_1'$ of Kummer-like surfaces with Picard number $16$,
Matsumoto-Sasaki-Yoshida \cite{MSY} (see also \cite{Y}) study hypergeometric integrals of type $(3,6)$ 
and calculate intersection numbers of the chambers coming from the configuration of six lines on $\mathbb{P}^2(\mathbb{C})$ by applying a delicate technique of twisted homologies.

\item
For the transcendental lattice for the family $\mathfrak{G}_1$ of Kummer-like surfaces with Picard number $16$,
Shiga and the author \cite{NS2} have a geometric construction of $2$-cycles on a generic member of $\mathfrak{G}_1$
taking into account the fact that  a generic member of $\mathfrak{G}_1$ is a double covering of that of  $\mathfrak{F}_1$. 
This construction is also based on  hard calculations of local monodromies for elliptic surfaces. 
\end{itemize}

 In this section,
 we will determine the transcendental lattice for the family $\mathfrak{G}_0$.
It is a non-trivial problem to determine it.
If there were a double covering $S_a \rightarrow K_a$ for generic members of $\mathfrak{F}_0$ and $\mathfrak{G}_0$, 
we could calculate the lattice for $\mathfrak{G}_0$ from the lattice for  $\mathfrak{F}_0$ in Theorem \ref{TheoremLattice}
via a technique of Nikulin \cite{Ni} Section 2.
However, in practice, we can prove that there is no such a double covering for generic $a\in \mathfrak{A}_0$.
This proof can be given  in a similar way to the proof of \cite{NS2} Theorem 6.3.

Now,
let us remark  the fact that our family $\mathfrak{G}_0$ naturally contains the families $\mathfrak{G}_1$ and $\mathfrak{G}_1'.$
We will determine the transcendental lattice for $\mathfrak{G}_0$ based on this fact.
The main result in this section is indebted to  heavy calculations for  $\mathfrak{G}_1'$ in \cite{MSY} and those for  $\mathfrak{G}_1$  in  \cite{NS2}.

 The following result for even lattices is necessary for our proof.
 
 \begin{lem}
 (\cite{Mo}, Corollary 2.10)\label{LemEmb(2,t)}
 Suppose $12\leq \rho \leq 20$.
  Let ${\bf T}$ be an even lattice of signature $(2,20-\rho).$
  Then, the primitive embedding ${\bf T} \hookrightarrow L_{K3}$ is unique up to isometry.
 \end{lem}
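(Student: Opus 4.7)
The plan is to derive this uniqueness result as a direct consequence of Nikulin's theory of discriminant forms and primitive embeddings into even unimodular lattices. The basic translation is that primitive embeddings ${\bf T}\hookrightarrow L_{K3}$ (up to $O(L_{K3})$) correspond bijectively to isomorphism classes of orthogonal complements ${\bf T}^{\perp}$ together with the gluing isomorphism identifying $A_{{\bf T}}\simeq A_{{\bf T}^{\perp}}$ (up to the action of $O({\bf T})\times O({\bf T}^{\perp})$ on the discriminant groups). Since $L_{K3}$ is unimodular, the discriminant form of ${\bf T}^{\perp}$ is forced to be $-q_{{\bf T}}$, and its signature is forced to be $(1,\rho-1)$ because $L_{K3}$ has signature $(3,19)$.

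The first step is to invoke Nikulin's existence and uniqueness theorem for an even lattice of prescribed signature $(t_{+},t_{-})$ and prescribed discriminant form $q$: such a lattice exists and is unique in its genus, provided the rank exceeds the minimal number of generators of the discriminant group by at least two, i.e., $t_{+}+t_{-}\geq l(A_{q})+2$. Applied to the orthogonal complement ${\bf T}^{\perp}$ we need
\begin{equation*}
\rho \;\geq\; l(A_{{\bf T}^{\perp}}) + 2 \;=\; l(A_{{\bf T}}) + 2.
\end{equation*}
Since $A_{{\bf T}}$ has at most $\mathrm{rank}({\bf T})=22-\rho$ generators, the bound $l(A_{{\bf T}})\leq 22-\rho$ always holds, so the inequality is implied by $\rho\geq(22-\rho)+2$, i.e., exactly by the hypothesis $\rho\geq 12$. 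This pins down ${\bf T}^{\perp}$ up to isometry.

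The second step is to promote uniqueness of the abstract complement ${\bf T}^{\perp}$ to uniqueness of the primitive embedding. For this I would invoke Nikulin's surjectivity statement (Theorem 1.14.2 in the original paper): under the same rank inequality, every automorphism of the discriminant form $q_{{\bf T}}$ lifts to an isometry of ${\bf T}^{\perp}$ (equivalently, the natural map $O({\bf T}^{\perp})\to O(q_{{\bf T}^{\perp}})$ is surjective). This lets us identify any two gluing isomorphisms, so all primitive embeddings with isometric orthogonal complement are $O(L_{K3})$-equivalent.

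The only substantive step is applying the correct version of Nikulin's theorem with the sharp inequality; once the rank/length count is performed, both existence of a unique genus for ${\bf T}^{\perp}$ and the gluing surjectivity follow from the same hypothesis, and the bound $\rho\geq 12$ emerges as precisely the critical threshold. There is no serious obstacle beyond bookkeeping of signatures and of the inequality $l(A_{{\bf T}})\leq\mathrm{rank}({\bf T})$.
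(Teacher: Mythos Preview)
The paper does not prove this lemma; it simply quotes Morrison's Corollary~2.10, which in turn is an immediate application of Nikulin's results (Theorem~1.14.4 and its ingredients) on primitive embeddings into even unimodular lattices. Your sketch is exactly the argument underlying that theorem---deducing uniqueness of ${\bf T}^\perp$ in its genus and surjectivity of $O({\bf T}^\perp)\to O(q_{{\bf T}^\perp})$ from the rank inequality $\rho\geq l(A_{\bf T})+2$---so there is no alternative approach in the paper to compare against.

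One small omission worth flagging: both the uniqueness-in-genus step and the surjectivity $O({\bf T}^\perp)\to O(q_{{\bf T}^\perp})$ require ${\bf T}^\perp$ to be \emph{indefinite}, not merely to satisfy the rank--length inequality. You did not state this hypothesis, and without it the conclusions can fail (definite lattices can have many classes in a genus). Here the signature of ${\bf T}^\perp$ is $(1,\rho-1)$ with $\rho\geq 12$, so indefiniteness is automatic and your argument goes through.
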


The following theorem is the main result of this section.

\begin{thm}\label{ThmTrK}
The transcendental lattice of a generic member of $\mathfrak{G}_0$ is given by the intersection matrix
$U(2)\oplus U(2) \oplus \begin{pmatrix} -2 & 0 & 1 \\ 0 & -2 & 1 \\ 1 & 1 & -4 \end{pmatrix}$.
\end{thm}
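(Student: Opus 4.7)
The plan is to bootstrap the already-known lattice ${\bf B}_1=U(2)\oplus U(2)\oplus A_2(-2)$ for the subfamily $\mathfrak{G}_1$ (Proposition~\ref{PropLatticeG}) up to the full family $\mathfrak{G}_0$, combining a specialisation argument with lattice-theoretic arithmetic and the uniqueness of primitive embeddings into $L_{K3}$ of Lemma~\ref{LemEmb(2,t)}.

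First I would show that ${\bf B}_1$ sits in ${\bf B}_0$ as a primitive sublattice of corank one. Since $\mathfrak{G}_1\subset\mathfrak{G}_0$ is cut out by $a_0=0$, the transcendental Hodge structure of a generic $K_a\in\mathfrak{G}_0$ degenerates to that of a generic member of $\mathfrak{G}_1$; the Picard rank can only jump upward, so there is a primitive embedding ${\bf B}_1\hookrightarrow {\bf B}_0$ (primitivity being automatic since both ${\bf B}_1$ and ${\bf B}_0$ are primitive in $L_{K3}$). A parameter count on $\mathfrak{A}_0$ modulo the $\mathbb{C}^{*}$-action, compared with the dimension of the relevant period domain, forces $\mathrm{rank}\,{\bf B}_0=7$, so the embedding has corank exactly one. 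Let $\langle w\rangle$ denote the orthogonal complement, with $w^{2}=-2d$ for some $d\ge 1$.

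Next I would fix $w^{2}$ and the gluing. The identity $|\mathrm{disc}\,{\bf B}_0|\cdot n^{2}=|\mathrm{disc}\,{\bf B}_1|\cdot 2d=192\cdot 2d$, where $n$ is the index of ${\bf B}_1\oplus\langle w\rangle$ in ${\bf B}_0$, together with evenness, restricts $(d,n)$ to a handful of pairs. To cut down this list I would bring in the Nikulin involution $\iota_{K_a}$ from Section~2.3: pullback along the quotient $K_a\to S_a$ gives a full-rank inclusion ${\bf A}_0(2)\hookrightarrow {\bf B}_0$ with $2$-elementary cokernel, and the orthogonal decomposition ${\bf A}_0(2)={\bf A}_1(2)\oplus A_1(-2)={\bf B}_1\oplus A_1(-2)$ places the $A_1(-2)$-summand inside $\langle w\rangle$; the evenness of ${\bf B}_0$ rules out a further doubling and forces $\langle w\rangle=A_1(-2)$, so $w^{2}=-4$, $n=2$ and $|\mathrm{disc}\,{\bf B}_0|=192$. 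Even index-two overlattices of ${\bf B}_1\oplus\langle -4\rangle={\bf A}_0(2)$ are in bijection with isotropic order-two elements of the discriminant group $({\bf A}_0(2))^{\vee}/{\bf A}_0(2)$; inspection of the discriminant form, combined with the requirement that ${\bf B}_0$ contain the new $(-2)$-classes predicted by the degenerations of Lemma~\ref{LemSingular}, singles out a unique such element, and a convenient basis for the resulting rank-three negative block yields exactly the matrix $\begin{pmatrix} -2 & 0 & 1 \\ 0 & -2 & 1 \\ 1 & 1 & -4 \end{pmatrix}$.

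Finally, Lemma~\ref{LemEmb(2,t)} applies since the Picard number $15$ exceeds $12$: it guarantees that the abstract even lattice ${\bf B}_0$ just determined admits a unique primitive embedding into $L_{K3}$ up to isometry, so ${\bf B}_0$ really is the transcendental lattice of a generic member of $\mathfrak{G}_0$. The main obstacle in this outline is the arithmetic isolation of the unique admissible isotropic overlattice element in the middle step: one must track the full discriminant quadratic form on $({\bf A}_0(2))^{\vee}/{\bf A}_0(2)$, not merely its underlying abelian group, and match it both against the parity condition on ${\bf B}_0$ and against the $2$-elementary constraint coming from $\iota_{K_a}$.
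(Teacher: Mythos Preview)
Your route differs from the paper's and, as written, has a genuine gap at the isolation step.

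The paper does \emph{not} try to classify overlattices of ${\bf A}_0(2)$. Instead it brings in the \emph{second} subfamily $\mathfrak{G}_1'=\{a_{14}=0\}$ (the Matsumoto--Sasaki--Yoshida family), whose transcendental lattice ${\bf B}_1'=U(2)^{\oplus 2}\oplus A_1(-1)^{\oplus 2}$ is also known. Both ${\bf B}_1$ and ${\bf B}_1'$ are realised by explicit bases inside a fixed copy of $L_{K3}$ (using that each contains the rank-$5$ lattice ${\bf B}_2$ for the common degeneration $\mathfrak{G}_2$, and Lemma~\ref{LemEmb(2,t)} to normalise the embedding). Since $\mathfrak{G}_0$ contains both $\mathfrak{G}_1$ and $\mathfrak{G}_1'$, the rank-$7$ lattice ${\bf B}_0$ must be the primitive hull of ${\bf B}_1+{\bf B}_1'$ in $L_{K3}$, and one reads off the stated Gram matrix directly. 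The Nikulin-involution relation ${\bf A}_0(2)\hookrightarrow{\bf B}_0$ appears in the paper only \emph{after} the theorem, as an independent consistency check.

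Your outline, by contrast, uses only $\mathfrak{G}_1$ and then tries to recover the missing information from the involution. Two steps do not go through as stated. First, from $\langle w\rangle=A_1(-2)$ you conclude ``$n=2$'' with no argument; nothing you have written excludes $n=1$, i.e.\ ${\bf B}_0={\bf A}_0(2)$ (indeed for the subfamilies $\mathfrak{G}_1,\mathfrak{G}_2,\mathfrak{G}_3$ one \emph{does} have ${\bf B}_j={\bf A}_j(2)$, so this is exactly the point where $\mathfrak{G}_0$ behaves differently and an argument is required). Second, even granting $n=2$, the discriminant group of ${\bf A}_0(2)$ has many isotropic order-$2$ elements, hence many even index-$2$ overlattices; your proposed selector, ``${\bf B}_0$ contains the $(-2)$-classes predicted by Lemma~\ref{LemSingular}'', is not usable as written: Lemma~\ref{LemSingular} describes degenerations of $S_a$, and the pullback to $K_a$ of a nodal $(-2)$-class on $S_a$ has self-intersection $-4$, not $-2$, so no $(-2)$-constraint on ${\bf B}_0$ is produced this way. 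What actually forces $(-2)$-vectors into ${\bf B}_0$ --- and what simultaneously fixes both $n$ and the correct overlattice --- is precisely the inclusion ${\bf B}_1'\subset{\bf B}_0$, i.e.\ the input from $\mathfrak{G}_1'$ that you omitted. Restoring that ingredient collapses your argument to the paper's.
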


\begin{proof}
We identify the $K3$ lattice $L_{K3}=U^{\oplus 3} \oplus E_8(-1)^{\oplus 2}$ with the $2$-homology group of $K3$ surfaces.

Let $e_j,f_j$ $(j\in\{1,2,3\})$ be elements of $U^{\oplus 3}$ satisfying $(e_j\cdot e_k)=(f_j\cdot f_k)=0$ and $(e_j\cdot f_k)=\delta_{j,k}$.
Let $p_j,q_j$ $(j\in\{1,\ldots,8\})$ be elements of  $E_8(-1)^{\oplus 2}$ with the intersection numbers defined by the Dynkin diagram in Figure 2.
Also, put $\nu_j=p_j+q_j$.
\begin{figure}[h]
\center
\includegraphics[scale=0.5, bb=150 320 500 520]{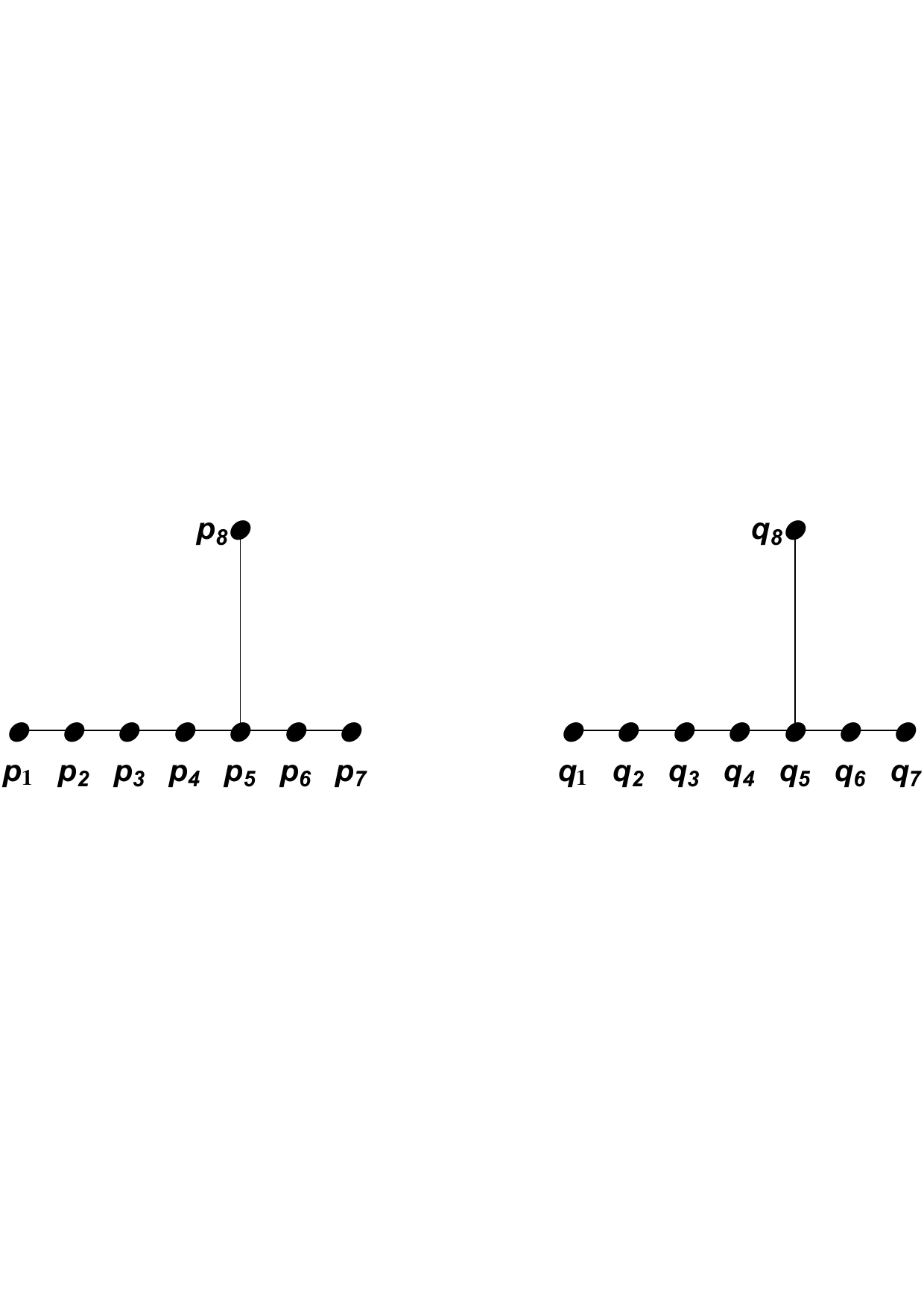}
\caption{Basis of $E_8(-1)^{\oplus 2}$}
\end{figure}

By reference to  \cite{Ma},
we put
\begin{align}\label{U(2)U(2)}
\begin{cases}
& \lambda_1=-\nu_5 +\nu_7+2(e_1+f_1), \quad \mu_1=-\nu_4,\\
& \lambda_2=\nu_7 +\nu_8 +2(e_1+e_2+e_3+f_3),\quad
 \mu_2=\nu_6.
\end{cases} 
\end{align}
Then, $\{\lambda_1,\mu_1,\lambda_2,\mu_2\} $ gives a basis of the lattice $U(2)^{\oplus 2}$.
This basis defines a primitive embedding $U(2)^{\oplus 2} \hookrightarrow L_{K3}$. 

Let us recall Proposition \ref{PropLatticeG}.
By extending the basis (\ref{U(2)U(2)}),
we have a primitive embedding of the transcendental lattice ${\bf B}_1 = U(2)^{\oplus 2} \oplus A_2(-2)$ for the family $\mathfrak{G}_1$  into $L_{K3}$ given by 
\begin{align}\label{B1}
{\bf B}_1= \langle \lambda_1,\mu_1,\lambda_2,\mu_2, \nu_1,\nu_2 \rangle_\mathbb{Z},
\end{align}
where the transcendental lattice ${\bf B}_2 = U(2)^{\oplus 2} \oplus A_1(-2)$ for the subfamily $\mathfrak{G}_2$ is a primitive sublattice of $L_{K3}$ explicitly given by
\begin{align}\label{B2}
{\bf B}_2 =\langle \lambda_1,\mu_1,\lambda_2,\mu_2, \nu_1 \rangle_\mathbb{Z}.
\end{align}
According to Lemma \ref{LemEmb(2,t)},
this embedding is unique up to isometry.
So, we can fix the embedding given by (\ref{B1}) and (\ref{B2})  without loss of generality.

We remark that the family  $\mathfrak{G}_1'$ also contains  $\mathfrak{G}_2$ as a subfamily.
Hence, its transcendental lattice ${\bf B}_1'=U(2)^{\oplus 2} \oplus A_1(-1)^{\oplus 2}$
should be an extension of the  lattice ${\bf B}_2$ of (\ref{B2}).
So, we have the following explicit  basis: 
\begin{align} \label{B1'}
{\bf B}_1'=\langle \lambda_1,\mu_1,\lambda_2,\mu_2, p_1,q_1 \rangle_\mathbb{Z}.
\end{align}
We note that the expression  (\ref{B1'})  is guaranteed by  the fact that
the  lattice ${\bf B}_2$ of (\ref{B2}) is invariant under the involution  given by interchanging two $A_1(-1)$ summands of the lattice ${\bf B_1'}$
(see \cite{MSY}; see also  \cite{Y} Chapter IX).

Our family $\mathfrak{G}_0$ contains $\mathfrak{G}_1$ and $\mathfrak{G}_1'$.
So, the transcendental lattice for a generic member of $\mathfrak{G}_0$ 
is  a primitive lattice of $L_{K3}$, of rank $7$ and  given by the basis which is an extension of  (\ref{B1}) and (\ref{B1'}).
Such a lattice is given  by the explicit basis
$
\langle \lambda_1,\mu_1,\lambda_2,\mu_2, p_1,q_1,\nu_2 \rangle_\mathbb{Z}$
whose intersection matrix is
$
U(2)^{\oplus 2}  \oplus \begin{pmatrix} -2 & 0 & 1 \\ 0 & -2 & 1 \\ 1 & 1 & -4 \end{pmatrix}.
$
\end{proof}

Since we have a double covering $K_a\rightarrow S_a$ for generic members of $\mathfrak{G}_0$ and $\mathfrak{F}_0$,
we can testify the correctness of  Theorem \ref{ThmTrK}.
Namely, 
when ${\rm Tr}(K_a)$ is given,
we can calculate the intersection matrix of ${\rm Tr}(S_a)$. 
Let $\Lambda_0=({\rm Tr}(K_a) \otimes \mathbb{Q})\cap (U^{\oplus 3} \oplus \frac{1}{2} E_8(-2) )$,
where $E_8(-2)$ is the lattice generated by $\nu_1,\ldots,\nu_8$,
${\rm Tr}(S_a)$ is isometric to the lattice $\Lambda_0(2)$
(see \cite{Ni} Section 2).
In our case, $\Lambda_0(2)$ is given by the direct sum of
$U^{\oplus 2}$ and $\langle \nu_1/2, q_1 ,\nu_2/2 \rangle_\mathbb{Z}(2)$.
This is isometric to
$$
U^{\oplus 2} \oplus \begin{pmatrix} -1& -1& 1/2 \\  -1& -2 & 1/2 \\  1/2 & 1/2 & -1  \end{pmatrix}(2) \simeq U^{\oplus 2} \oplus \begin{pmatrix} -2&-2& 1\\  -2&-4& 1\\ 1&1&-2 \end{pmatrix} \simeq U^{\oplus 2} \oplus \begin{pmatrix} -2&1&0 \\ 1&-2&0 \\ 0&0&-2 \end{pmatrix}={\bf A}.
$$
This is concordant with Theorem \ref{TheoremLattice}.

\section*{Acknowledgment}
The author would like to thank  Professor Manabu Oura and Professor Jiro Sekiguchi for valuable suggestions from the viewpoint of complex reflection groups.
He also appreciates the reviewer's helpful comments to improve the manuscript.
This work is supported by JSPS Grant-in-Aid for Scientific Research (22K03226, 18K13383),
JST FOREST Program (JPMJFR2235)
and 
MEXT LEADER.

{\small

}

\begin{center}
\hspace{10cm}\textit{Atsuhira  Nagano}\\
\hspace{10cm}\textit{Faculty of Mathematics and Physics}\\
 \hspace{10cm} \textit{Institute of Science and Engineering}\\
\hspace{10cm}\textit{Kanazawa University}\\
\hspace{10cm}\textit{Kakuma, Kanazawa, Ishikawa}\\
\hspace{10cm}\textit{920-1192, Japan}\\
 \hspace{10cm}\textit{(E-mail: atsuhira.nagano@gmail.com)}
  \end{center}

\end{document}